\newcommand{\la}{\lambda}
\newcommand{\mb}{\mathbf}
\newcommand{\mc}{\mathcal}
\renewcommand{\Im}{\mathrm{Im}\,}
\newcommand{\N}{\mathbb{N}}
\newcommand{\R}{\mathbb{R}}
\newcommand{\C}{\mathbb{C}}
\newcommand{\Z}{\mathbb{Z}}
\newcommand{\B}{\mathbb{B}}
\newcommand{\rst}[1]{\ensuremath{{\mathbin |}%
\raise-.5ex\hbox{$#1$}}} 
\newcommand{\norm}[1]{{\left\vert\kern-0.25ex\left\vert\kern-0.25ex\left\vert #1 
    \right\vert\kern-0.25ex\right\vert\kern-0.25ex\right\vert}}
\newtheorem{lemma}{Lemma}[section]
\newtheorem{theorem}[lemma]{Theorem}
\newtheorem{corollary}[lemma]{Corollary}
\newtheorem{proposition}[lemma]{Proposition}
\theoremstyle{remark}
\newtheorem{remark}[lemma]{Remark}
\theoremstyle{definition}
\numberwithin{equation}{section}
\title[]{Nonlinear stability of homothetically shrinking Yang-Mills solitons in the equivariant case}
\author{Irfan Glogi\'c}
\address{Universitat Wien, Fakult\"at f\"ur Mathematik, Oskar-Morgenstern-Platz 1, 1090 Vienna, Austria}
\email{irfan.glogic@univie.ac.at}
\author{Birgit Sch\"orkhuber}
\address{Karlsruher Institut f\"ur Technologie, Fakult\"at f\"ur Mathematik,  Englerstra\ss e 2, 76131 Karlsruhe, Germany}
\email{birgit.schoerkhuber@kit.edu}
\thanks{Irfan Glogi\'c acknowledges the support of the Austrian Science Fund FWF, Project P 30076. \\
Funded by the Deutsche Forschungsgemeinschaft (DFG, German Research Foundation) - Project-ID 258734477 - SFB 1173}
\begin{document}
\begin{abstract}
We study the heat flow for Yang-Mills connections on $\R^d \times SO(d)$. It is well-known that in dimensions $5 \leq d \leq 9$ this model admits
homothetically shrinking solitons, i.e., self-similar blowup solutions, with an explicit example given by Weinkove \cite{Wei04}. We prove the nonlinear asymptotic stability of the Weinkove solution under small equivariant perturbations and thus extend a result by the second author and Donninger for $d=5$ to higher dimensions. At the same time, we provide  a general framework for proving stability of self-similar blowup solutions to a large class of semilinear heat equations in arbitrary space dimension $d \geq 3$, including a robust and simple method for solving the underlying spectral problems.
\end{abstract}

\maketitle

\section{Introduction}

In this paper, we study connection 1-forms $A_{j}: \R^d \to  \mathfrak{so}(d)$, $j = 1,\dots, d$, where 
$ \mathfrak{so}(d)$ denotes the Lie algebra of the Lie group $SO(d)$, i.e., $ \mathfrak{so}(d)$ can be considered as the set
of skew-symmetric $(d\times d)$-matrices endowed with
the commutator bracket. In the following, Einstein's summation convention is in force. The associated covariant derivative acting on $\mathfrak{so}(d)$-valued functions is defined by $\mb D_j := \partial_j + [A_j,\cdot]$ and the curvature tensor amounts to  
\[ F_{j k} := \partial_{j} A_{k}- \partial_{k} A_{j} + [ A_{j}, A_{k} ]. \]
The Yang-Mills functional is then defined as
\begin{equation}\label{Eq:YangMillsFunc}
\mc F[A] = \frac{1}{2} \int_{\R^d} \mathrm{tr} \langle F_{jk}, F^{jk} \rangle dx.
\end{equation}  
The associated Euler-Lagrange equations read
\begin{align}\label{Eq:YangMills}
\mb D^{j}  F_{j k}(x)= 0
\end{align}
and solutions are referred to as Yang-Mills connections. By introducing an artificial time-dependence, the gradient flow associated to Eq.~\eqref{Eq:YangMills} yields 
\begin{equation}\label{Eq:YMGradientflow}
\partial_t A_{k}(x,t) = \mb D^{j}  F_{j k}(x,t) , \quad t > 0,
\end{equation}
for some initial condition $A_{k}(0,x) = A_{k}(x)$. This model is referred to as the \textit{Yang-Mills heat flow}
for connections on the trivial bundle $\R^{d} \times SO(d)$. The natural question concerns the existence of solutions to this initial value problem and the possibility of the formation of singularities in finite time. 

Eq.~\eqref{Eq:YMGradientflow} enjoys scale invariance, $A_{j} \mapsto A^{\lambda}_{j}$, 
\[A^{\lambda}_{j}(x,t) := \lambda A_{j}( \lambda x, \lambda^2 t), \quad \lambda > 0\]
and the model is \textit{supercritical} for $d\geq5$.  In this case it is well-known that solutions can blowup in finite time, see the works of Naito \cite{Nai94}, Grotowski \cite{Gro01} and Gastel \cite{Gas02}. The nature of singularities for the Yang-Mills heat flow over compact $d$-dimensional manifolds has been investigated by Weinkove in  \cite{Wei04} and recently by Kelleher and Streets in \cite{KelleherStreets2018}, \cite{KelleherStreets2018c}, showing that \textit{homothetically shrinking solitons} appear as blowup limits at singular points. Such objects correspond to self-similar solutions of the Yang-Mills heat flow on the trivial bundle over $\R^d$, which is the main motivation to study the problem in this geometric setting. An explicit example was given in \cite{Wei04}, namely
\begin{equation}\label{Def:EquivWeinkove}
A^{T}_{j}(x,t) =  u_T(|x|,t) \sigma_{j}(x) ,
\end{equation}
where $\sigma^{ik}_{j}(x) =    \delta^k_{j} x^i - \delta^i_{j} x^k$,
\begin{align}\label{Eq:WeinkoveSol}
u_T(r,t)  = \tfrac{1}{T-t} W\left (\tfrac{r}{\sqrt{T-t}} \right), \quad W(\rho) = \frac{1}{a \rho^2 + b}
\end{align}
for some $T >0$ and
\begin{equation}\label{Def:ConstantsWeinkove}
a =\tfrac {\sqrt{d-2}} {2 \sqrt{2}} , \quad b =  \tfrac{1}{2} (6d-12-(d+2) \sqrt{2d-4} ),
\end{equation}
which solves Eq. \eqref{Eq:YMGradientflow} in dimensions $5 \leq d \leq 9$. In this paper, we investigate the stability of this solution in the $SO(d)-$equivariant setting, i.e., we only consider connections of the form 
\begin{equation}\label{Def:Equiv}
A_{j}(x,t) = u(|x|,t) \sigma_{j}(x).
\end{equation}
It is well-known that this symmetry is preserved by the flow, see e.g. \cite{Gro01}, \cite{Gas02}. Furthermore, the system \eqref{Eq:YMGradientflow} reduces to a single equation for the function $u: [0,\infty) \times [0,\infty) \to \R$,
\begin{align} \label{Eq:EquivarEq}
\partial_t u(r,t) -   \partial^2_r  u(r,t)  - \frac{d+1}{r} \partial_r u(r,t) 
  - 3 (d-2)  u^2(r,t)  +  (d-2)  r^2 u^3(r,t) = 0,
\end{align}  
with initial condition $u(\cdot, 0) = u_0$.  The scale invariance of Eq.~\eqref{Eq:YMGradientflow} implies that
Eq.~\eqref{Eq:EquivarEq} is invariant under  $u \mapsto u_{\lambda}$,
 \begin{equation}\label{Eq:scaling}
 	u_{\lambda}(r,t)= \lambda^2 u(\lambda r,\lambda^2 t), \quad \lambda > 0. 
 \end{equation}
In particular, the explicit solution defined in \eqref{Eq:WeinkoveSol} is a self-similar blowup solution to \eqref{Eq:EquivarEq}. We note that infinitely many solutions of that type are expected to exist in dimensions $5 \leq d \leq 9$ with the profile $W$ as a ``ground state", see Biernat and Bizo{\'n} \cite{BieBiz11}. 
\subsection{The main result} \label{Sec:MainResult}
The central result of this paper is the proof that the family of self-similar blowup solutions $\{u_T:T>0\}$ is nonlinearly asymptotically stable under small perturbations of the initial data. The strategy we devise is relatively simple and straightforward but nevertheless quite general and broadly applicable.  In view of Eq.~\eqref{Eq:EquivarEq} we consider $u(|\cdot|,t)$ as a radial function on $\R^{d+2}$. 
 We define the (radial) differential operators
\begin{align}\label{Def:Radial_Operators}
D^{k} := \begin{cases} \Delta^{k/2}, & \quad \text{ for $k \in \N_0$ even}  \\ \nabla \Delta^{(k-1)/2}, & \quad \text{ for  $k \in \N_0$ odd.}  \end{cases}
\end{align}
Then we set $n := d+2$ and define
\begin{align}\label{Def:kappas}
\kappa_0 = \begin{cases} \frac{n-3}{2}, & \text{ for } n \text{ odd }  \\  \frac{n-2}{2}, & \text{ for } n \text{ even }    \end{cases}, \quad  \kappa_1 = \kappa_0 + 2.
\end{align}
Furthermore, we denote by $X$ the completion of the space of radial $C^\infty_c(\R^n)$ functions under the following norm
\begin{align}\label{Def:NormX}
\|f \|^2_X := \|  D^{\kappa_0}f \|^2_{L^2(\R^{n})} +   \|  D^{\kappa_1}f \|^2_{L^2(\R^{n})},
\end{align} 
see Section \ref{Sec:X}.  Note that the solution $u_T$ blows up both in $L^\infty$ and in the $X-$norm\footnote{Strictly speaking, since $u_T(|\cdot|,t) \notin C^\infty_c(\R^n) $ one has to show first that $u_T(|\cdot|,t)\in X$ and that its $X-$norm is indeed given by \eqref{Def:NormX}. This follows from the proof of Lemma \ref{Le:finX}.}, with the blowup rates given by
\begin{align}\label{Eq:ScalinguT}
\|u_{T}(|\cdot|,t) \|_{L^\infty(\R^n)} \simeq (T-t)^{-1} \quad \text{and} \quad
\|u_{T}(|\cdot|,t) \|_X \simeq (T-t)^{ - \frac12 (\kappa_1 + 2 - \frac{n}{2} )}.
\end{align}
In the following, we fix $T=1$ and consider the time evolution governed by \eqref{Eq:EquivarEq} for radial perturbations of the blowup initial data 
\begin{align}\label{Eq:InitialData}
u(|\cdot|,0) = u_1(|\cdot|,0) + v.
\end{align}
For the statement of the main result we use the notation
$ u(t):=u(|\cdot|,t)$.

\begin{theorem}\label{Thm:Main}
Fix $5 \leq d \leq 9$.  There exist  $\delta,M>0$ such that the following holds. For every radial $v \in C^\infty_{c}(\R^{d+2})$ satisfying  
$ \|v\|_X \leq \tfrac{\delta}{M}$
there exists $T = T(v) \in [1- \delta, 1+ \delta]$ such that the Cauchy problem given by \eqref{Eq:EquivarEq}
and \eqref{Eq:InitialData} has a unique solution $u$ for which
\begin{gather*}
	u(t) \in C^\infty(\R^{d+2}) \text{ for all } t\in [0,T),\quad \text{and} \quad
	u\in C([0,T),X)\cap C^1((0,T),X).
\end{gather*}
Furthermore, the solution blows up at $t = T$, and converges to $u_T$ in the sense that 
\begin{equation}\label{Eq:X_estimate}
\frac{\| u(t)-u_T(t) \|_X}{\|u_{T}(t) \|_X} \lesssim \delta (T-t)^{\omega},
\end{equation}
and
\begin{equation}\label{Eq:L_inf_estimate}
\frac{\| u(t)  -  u_T(t) \|_{L^{\infty}(\R^+)}}{\| u_T(t)\|_{L^{\infty}(\R^+)}} \lesssim  \delta (T-t)^{\omega},
\end{equation}
for all $t \in [0,T)$ and some $\omega > 0$.
\end{theorem}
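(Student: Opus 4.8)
The plan is to pass to self‑similar variables so that the Weinkove profile becomes a static solution, reduce the stability statement to a linear semigroup estimate plus a nonlinear perturbation argument, and absorb the single unavoidable instability by adjusting the blowup time.

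\textbf{Step 1 (Similarity variables and linearization).} Keeping the blowup time $T$ as a free parameter, I would introduce $\tau = -\log(T-t)$, $\rho = r\,e^{\tau/2}$ and the rescaled unknown $\psi(\rho,\tau) := e^{-\tau}u\big(\rho e^{-\tau/2},\, T - e^{-\tau}\big)$, so that the profile $W$ from \eqref{Eq:WeinkoveSol} is a $\tau$‑independent solution. A direct computation turns \eqref{Eq:EquivarEq}, with $n=d+2$ and $\phi := \psi - W$ viewed as a radial function on $\R^n$, into $\partial_\tau\phi = \mc L\phi + \mc N(\phi)$, where $\mc L = \mc L_0 + \mc V$, $\mc L_0 = \Lap - \tfrac12\rho\,\partial_\rho - 1$ is the radial self‑similar heat operator on $\R^n$, $\mc V = 6(d-2)W - 3(d-2)\rho^2 W^2$ is the (bounded, decaying) potential obtained by linearizing $3(d-2)\psi^2 - (d-2)\rho^2\psi^3$ at $W$, and $\mc N(\phi) = 3(d-2)\phi^2 - 3(d-2)\rho^2 W\phi^2 - (d-2)\rho^2\phi^3$. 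Using the analysis of the space $X$ from Section~\ref{Sec:X}, one shows that $\mc L_0$ generates a strongly continuous semigroup on $X$ with strictly negative growth bound, say $\le -\omega_0 < 0$, and that $\mc V$ is a bounded and relatively compact perturbation; hence $\mc L$ generates a semigroup $(S(\tau))_{\tau\ge 0}$ on $X$ and $\sigma(\mc L)\cap\{\Re\la > -\omega_0\}$ consists of finitely many eigenvalues of finite algebraic multiplicity.

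\textbf{Step 2 (Spectral analysis — the crux).} Differentiating the family $\{u_T\}$ with respect to $T$ produces, after rescaling, an eigenfunction of $\mc L$ with eigenvalue $\la = 1$; this mode reflects the freedom in the blowup time and is the only instability that must be present. The key claim is that it is the \emph{only} eigenvalue in $\{\Re\la \ge -\omega\}$ for a suitable $\omega \in (0,\min(1,\omega_0))$, and that it is simple. Since $W(\rho) = 1/(a\rho^2 + b)$ is explicit, the eigenvalue equation $(\mc L - \la)f = 0$ is a second‑order linear ODE in $\rho$ with regular singular points at $\rho = 0$ and at $\rho = 1$ (the self‑similar light cone), in fact of hypergeometric type. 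I would run a Frobenius analysis at both singular points: the indicial exponents at $0$ are governed by the radial Laplacian on $\R^n$, those at $\rho = 1$ by the degeneration of the principal part. An admissible eigenfunction must be simultaneously the solution analytic at $\rho = 0$ and the solution lying in $X$ near $\rho = 1$; matching these via the closed‑form (hypergeometric) solutions forces $\la$ into a finite list of candidate values, which are then checked individually, leaving only $\la = 1$. (Equivalently, a Birman–Schwinger / quadratic‑form bound on $X$ limits the number and location of unstable eigenvalues; the explicit profile makes either route short. This is the robust spectral method advertised in the abstract.) With the unstable spectrum pinned down, the Riesz projection $P = \tfrac{1}{2\pi\I}\oint(\la - \mc L)^{-1}\,d\la$ about $\la = 1$ has rank one, and by the Gearhart–Prüss theorem — for which one needs a uniform bound on $(\la - \mc L)^{-1}$ along $\Re\la = -\omega$, obtained from the ODE picture together with a Neumann series in $\mc V$ — one gets
\[
\|S(\tau)(1-P)f\|_X \lesssim e^{-\omega\tau}\|f\|_X,\qquad S(\tau)Pf = e^{\tau}Pf,\qquad \tau \ge 0.
\]

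\textbf{Step 3 (Nonlinear estimates).} Since $\kappa_0 < n/2 < \kappa_1$, the space $X$ embeds into $L^\infty(\R^n)$ and, combined with the radial structure controlling the origin, has the multiplicative properties of a Banach algebra. Using this, together with the decay of $X$‑functions at spatial infinity (which controls the weight $\rho^2$ against the cubic term), I would prove $\|\mc N(\phi) - \mc N(\tilde\phi)\|_X \lesssim (\|\phi\|_X + \|\tilde\phi\|_X)\,\|\phi - \tilde\phi\|_X$ on a small ball in $X$; the $\rho^2\phi^3$ term near $\rho \to \infty$ is the main technical point here.

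\textbf{Step 4 (Fixed point, codimension‑one reduction, and conclusion).} In the Banach space $\mc X_\omega := \{\phi \in C([0,\infty),X) : \sup_{\tau\ge 0} e^{\omega\tau}\|\phi(\tau)\|_X < \infty\}$ I would solve the Duhamel equation with the unstable direction subtracted off,
\[
\phi(\tau) = S(\tau)\,\mc U(v,T) + \int_0^\tau S(\tau - s)\,\mc N(\phi(s))\,ds - e^{\tau}\,P\!\left[\mc U(v,T) + \int_0^\infty e^{-s}\,\mc N(\phi(s))\,ds\right],
\]
where $\mc U(v,T)$ is the rescaled, $T$‑dependent initial perturbation coming from \eqref{Eq:InitialData}; for $\delta$ small a contraction‑mapping argument produces a unique $\phi \in \mc X_\omega$ with $\|\phi\|_{\mc X_\omega} \lesssim \delta$. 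A one‑dimensional fixed‑point (monotonicity / Brouwer‑type) argument in $T \in [1-\delta, 1+\delta]$ then makes the bracketed correction term vanish for a unique $T = T(v)$ — this is the codimension‑one reduction selecting the blowup time. Undoing the change of variables yields the solution $u$ with the stated regularity (parabolic smoothing gives $u(t) \in C^\infty(\R^{d+2})$ for $t < T$), blowup at $t = T$, and the decay \eqref{Eq:X_estimate}; \eqref{Eq:L_inf_estimate} follows from $X \hookrightarrow L^\infty$ and the scaling \eqref{Eq:ScalinguT}. The genuine difficulty is Step 2 — ruling out any unstable eigenvalue other than the symmetry mode and obtaining the resolvent bound for Gearhart–Prüss; the weighted cubic estimate of Step 3 near spatial infinity is the principal technical (rather than conceptual) hurdle.
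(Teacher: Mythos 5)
Your overall architecture (similarity variables, linearization around $W$, a decaying semigroup on the stable subspace, a modified Duhamel equation with the unstable direction subtracted, and a Brouwer argument in $T$) matches the paper, and Steps 1, 3 and 4 are essentially the argument actually carried out. The genuine gap is in Step 2, which you yourself identify as the crux. Your proposed route — a Frobenius/hypergeometric analysis with ``regular singular points at $\rho=0$ and at $\rho=1$ (the self-similar light cone)'' — does not apply here: there is no light cone and no singularity at $\rho=1$ for a parabolic equation in similarity variables. The linearized operator $\Delta-\tfrac12\rho\partial_\rho-1+V$ is, after a Liouville transform, a half-line Schr\"odinger operator with a confining potential $\sim\rho^2/16$ at infinity; its connection problem is of confluent (not Gauss) hypergeometric type and does not reduce to matching at three regular singular points, so the ``finite list of candidate eigenvalues checked individually'' does not materialize as described. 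Likewise, the Gearhart--Pr\"uss step requires a uniform resolvent bound along $\Re\la=-\omega$ which you propose to get ``from the ODE picture together with a Neumann series in $\mc V$''; the Neumann series does not converge for a potential that is not small, so this is not a proof.

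The paper closes this gap by an entirely different mechanism, which is its main contribution. The linearized operator is self-adjoint on the Gaussian-weighted space $\mc H=L^2_\sigma(\R^n)$, $\sigma(x)=e^{-|x|^2/4}$, with compact resolvent, so the spectrum is real and discrete and no Gearhart--Pr\"uss argument is needed there. The known eigenvalue $\la=1$ (your symmetry mode, with explicit positive eigenfunction) is removed by a supersymmetric factorization $\mc A=A^+A^--1$, and the absence of any further non-positive spectrum for the partner operator $A^-A^+-1$ is established by a GGMT-type integral criterion (Theorem \ref{Thm:GGMT}), verified by an explicit computation of $B(n,p)<1$ for each $n=8,\dots,11$. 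This is close in spirit to the Birman--Schwinger alternative you mention only parenthetically, but it is carried out on $\mc H$, where self-adjointness is available, not on $X$. The decay is then transferred from $\mc H$ to $X$ not by resolvent estimates but by the dissipative inequality of Lemma \ref{Le:DissEst}, which bounds $(\mc L f|f)_X$ by $-\tilde\omega\|f\|_X^2$ plus graph-norm terms that decay by the self-adjoint theory, followed by a Gronwall argument. To complete your proof you would need to replace Step 2 by an argument of this kind (or by the constructive oscillation-counting method of Biernat--Donninger, which the paper explicitly set out to avoid).
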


\medskip
 The case $d=5$ has been proved by Donninger and the second author in \cite{DonSch19}. However, the approach devised there is technically involved and a generalization to more general situations is highly non-trivial. Here, we use a different strategy based on the work of Biernat, Donninger and the second author \cite{BieDon18}, \cite{BieDonSch17} on an analogous problem for the harmonic map heat flow in $d=3$, see the discussion below. We streamline, simplify and generalize this approach to all higher space dimensions (although it is applied only to $5 \leq d \leq 9$ where the blowup solution exists). One of the main contributions of this paper is the underlying spectral analysis, which is completely new in this context. For this, as opposed to the constructive approach of \cite{BieDon18}, we develop a soft, robust and general method to solve  spectral problems that underlie stable self-similar blowup in semilinear heat equations. 

\subsection{Related results}

For the Yang-Mills heat flow, notions of variational stability of shrinking solitons have been investigated by Kelleher and Streets  \cite{KelStr16} as well as by Chen and Zhang \cite{CheZha15}. However, to the best of our knowledge, our results together with \cite{DonSch19} are the only ones that prove stable blowup behavior in any sense.

Eq.~\eqref{Eq:YMGradientflow} is energy critical in $d=4$. In this case, global existence of solutions in the equivariant setting was shown by Schlatter, 
Struwe and Tahvildar-Zadeh  \cite{SchStrTah98}. For more general geometric situations, global existence was a long-standing open problem, which has been positively resolved only very recently by Waldron \cite{Wal19}. In higher space dimensions $d \geq 10$, the existence of self-similar blowup solutions to Eq.~\eqref{Eq:EquivarEq} 
was excluded by Bizo{\'n} and Wasserman \cite{BizWas15}. Instead, the generic blowup is expected to be \textit{type II}, see also the discussion on the harmonic map heat flow below. 


\subsubsection{Supercritical harmonic maps heat flow}
The Yang-Mills heat flow bears many similarities with the heat flow of harmonic maps from $\R^d$ to $\mathbb{S}^d$, which in co-rotational symmetry 
reduces to 
\begin{align}\label{Eq:HarmMaps}
\partial_t u(r,t) -   \partial^2_r  u(r,t)  - \tfrac{d-1}{r} \partial_r u(r,t) -   \tfrac{(d-1) \sin(2 u(r,t))}{2 r^2} =0 .
\end{align}
Self-similar blowup solutions exist for $3 \leq d \leq 6$, see Fan \cite{Fan99}. In \cite{BieDonSch17}, \cite{BieDon18} stable self-similar blowup has been proven in $d=3$, which confirmed prior numerical observations by Biernat and Bizo{\'n} \cite{BieBiz11}. We note that our approach can be applied to \eqref{Eq:HarmMaps} to obtain the analogous result in higher space dimensions. 

For $d \geq 7$, self-similar blowup is excluded, see \cite{BizWas15}. In this regime, type II blowup solutions have been contructed by Biernat and Seki \cite{BieSek19} and well as by Ghoul \cite{Gho17} and Ghoul, Nguyen and Tien \cite{GhoNguTie19}. It is most likely that similar results can be obtained for the Yang-Mills heat flow in dimensions $d \geq 10$. The critical case corresponds to $d=2$. In contrast to the Yang-Mills heat flow, finite-time blowup of solutions occurs, see  \cite{GroSha07} for a discussion. Stable type II blowup for $d=2$ is due to Rapha{\"e}l and Schweyer \cite{RapSch13}, \cite{RapSch14}.

\subsection{Comments on the method of proof}

The general idea is to consider the initial value problem for Eq.~\eqref{Eq:EquivarEq} in adapted coordinates in which the self-similar blowup solution becomes static and stability of finite time blowup turns into asymptotic stability of a steady state solution. For the new dynamical system, we follow the usual approach and investigate the linearized problem first. By means of semigroup theory we solve the linear problem and then treat the nonlinearity perturbatively by using fixed point arguments.

For the implementation of this strategy a suitable functional analytic setup has to be found. For the linearized problem, there is in fact a canonical choice provided by a weighted Lebesgue space $\mc H$ of radial $L^2_{\sigma}(\R^n)$ functions with $\sigma(x) = e^{-|x|^2/4}$. In this space, the underlying spectral problem is self-adjoint and defines the linear dynamics in $\mc H$. However, the weight function $\sigma$ decays at infinity, rendering this setting useless in controlling the nonlinear terms.

To get around this issue, we introduce the function space $X$ that can be thought of as an intersection Sobolev space which embeds continuously into $\mc H$. Furthermore, $X$ is invariant under the linear flow and  allows handling the nonlinearity. To control the linear flow in $X$, we exploit the decay of the underlying potential at infinity and split the problem into one on a bounded domain, where the self-adjoint growth bounds can be utilizied, and a remainder that can be made small in a suitable sense.

The analysis of the linear flow on $\mc H$ crucially relies on spectral theory for self-adjoint operators on $L^2(\R^+)$ of the following form
\begin{equation}\label{Def:Af}
	\mc Af(\rho)= -f''(\rho)+q(\rho)f(\rho).
\end{equation}
Here the potential $q$ is smooth on $\R^+$ but  singular at both endpoints, namely 
$\lim_{\rho \rightarrow 0^+}q(\rho)=\lim_{\rho \rightarrow +\infty}q(\rho)=+\infty$. In particular, one has to show that
\begin{equation}\label{Eq:spec}
	\sigma(\mc A) \subseteq \{-1\} \cup (0,\infty),
\end{equation}
where the negative eigenvalue arises as a result of time-translation symmetry. To show this, we establish a simple (integral bound) criterion for the absence of negative spectrum of radial Schr\"odinger operators of type \eqref{Def:Af}. Along with first removing $\la=-1$ from the spectrum (via a well known trick from the supersymmetric quantum mechanics) this yields a particularly simple proof of \eqref{Eq:spec}. What is more, the method we develop is general, uses little structure, and can be applied even when the underlying self-similar solution is not known in closed form.

\subsection{Notation and Conventions}
We write $\N$ for the natural numbers $\{1,2,3, \dots\}$, $\N_0 := \{0\} \cup \N$. Furthermore, $\R^+ := \{x \in \R: x >0\}$.
The notation $a\lesssim b$ means $a\leq Cb$ for an absolute constant $C>0$ and we write $a\simeq b$ if $a\lesssim b$ and $b \lesssim a$.  	
If $a \leq C_{\varepsilon} b$ for a constant $C_{\varepsilon}>0$ depending on some parameter $\varepsilon$, we write $a \lesssim_{\varepsilon} b$. 
We use the common notation $\langle x \rangle := \sqrt{1+|x|^2}$ also known as the \textit{Japanese bracket}.
For a function $x \mapsto g(x)$, we denote by $g^{(n)}(x) = \frac{d^n g(x)}{dx^n} $ the derivatives of order $n \in \N$. For $n=1,2$, we also write $g'(x)$ and $g''(x)$, respectively. 
By $C_{c,\text{rad}}^\infty(\R^n)$ we denote the space of smooth, radial functions with compact support. Also, $\mc S_{\mathrm{rad}}(\R^n)$ stands for  the space of radial Schwartz functions.
By $L^p(\Omega)$ for $\Omega \subseteq \R^n$, we denote the standard Lebesgue space. For a closed linear operator $(L, \mc D(L))$, we write $\sigma(L)$ for the spectrum.
The resolvent set is defined as $\rho(L) := \C \setminus \sigma(L)$ and we write 
$R_{L}(\lambda):=(\lambda- L)^{-1}$ for $\lambda \in \rho(L)$.

\section{Formulation of the problem in similarity coordinates}\label{Section:Formulation_SimVar}

Fix $5 \leq d \leq 9$, $d \in \N$.  We rewrite the initial value problem given by \eqref{Eq:EquivarEq} and \eqref{Eq:InitialData} in  similarity coordinates
 $(\rho,\tau) \in [0,\infty) \times [0,\infty)$ defined by 
\[ \tau = - \log(T-t) + \log T, \quad \rho = \frac{r}{\sqrt{T-t}}. \]
The blowup time $T > 0$ enters the analysis as a free parameter that will be fixed only at the very end of the argument. By setting 
\[  \psi( \tfrac{r}{\sqrt{T-t}}, - \log(T-t) + \log T)  :=  (T-t)  u(r,t), \]
we obtain
\begin{align} \label{Eq:SelfSim}
\partial_{\tau}  \psi(\rho,\tau)   =   \big ( \partial^2_{\rho}     + \tfrac{d+1}{\rho} \partial_{\rho} 
- \tfrac{1}{2} \rho \partial_{\rho}  - 1 \big ) \psi(\rho,\tau)    + 3(d-2)  \psi(\rho,\tau)^2 - (d-2) \rho^2   \psi(\rho,\tau)^3
\end{align}
with initial condition 
\begin{equation}\label{Eq:SelfSimData}
\psi(\rho,0)  = T  W \left (\sqrt{T} \rho \right )   + T v ( \sqrt{T} \rho ).
\end{equation}

The differential operator on the right hand side of Eq.~\eqref{Eq:SelfSim} has a natural extension to 
$\R^{n}$, $n = d+2$.  In fact, the evolution equation can be formulated as 
\begin{align}\label{Eq:YM_Abstract}
\frac{d}{d \tau} \Psi(\tau)  = L_0 \Psi(\tau)  + F(\Psi(\tau ))
\end{align}
with the formal differential operator
\begin{align}\label{Def:L0}
L_0 f(x) := \Delta f(x) - \frac12 x \cdot \nabla f(x) - f(x)
\end{align}
acting on radial functions and a nonlinearity defined accordingly. Note that here $\Delta$ denotes the Laplace operator on $\R^{n}$.
To study the evolution near $W$, we consider the ansatz
$\Psi(\tau) = W(|\cdot|) + \Phi(\tau)$, which
yields the central equation of this paper
\begin{align}\label{Eq:YM_AbstractPerturbation}
\begin{split}
\frac{d}{d \tau} \Phi(\tau)  & = L \Phi(\tau)  + \mc N(\Phi(\tau)) ,\quad \tau > 0 \\
\Phi(0) & =  \mc  U(v, T).
\end{split}
\end{align}
Here
\begin{align}\label{Def:L}
L := L_0 + L', \quad L'f(x) := V(|x|) f(x),
\end{align}
where the potential is given by
\begin{align}\label{Eq:Pot_explicit}
V(\rho) = \frac{3(n-4)\big (2b+(2a -1) \rho^2 \big)}{(b+ a \rho^2)^2}
\end{align}
for $a =\tfrac {\sqrt{n-4}} {2 \sqrt{2}}$, $b =  n(3 - \tfrac12 \sqrt{2n-8})-12$. The remaining nonlinear term is given by 
\begin{equation}\label{Eq:Nonlinear_term}
	\mc N(\Phi(\tau)) = (d-2) \big (3 \Phi(\tau)^2  - 3 |\cdot|^2  W(|\cdot|) \Phi(\tau)^2  -   |\cdot|^2  \Phi(\tau)^3 \big)
\end{equation}
and the initial condition transforms to
\begin{equation}\label{Def:Initial_Data}
	\mc U(v, T)  =  T v ( \sqrt{T} |\cdot| ) + T W \left ( \sqrt{T} |\cdot| \right ) -  W(|\cdot|).
\end{equation}
Now we turn to developing a functional analytic set up for the analysis of the evolution Eq.~\eqref{Eq:YM_AbstractPerturbation}.
\section{Operator analysis and semigroup theory}

\subsection{Self-adjoint theory and linear dynamics in a weighted $L^2-$space}
 
 We study first the free linear part of Eq.~\eqref{Eq:YM_AbstractPerturbation}. We define a Hilbert space $\mc H$ as a weighted $L^2$-space of radial functions
 \[\mc H := \{ f \in L^2_{\sigma}(\R^n): f \text{ is real and radial} \},\]
 with $\sigma(x) = e^{-|x|^2/4}$ and the corresponding inner product
  \[
  (f|g)_{L^2_\sigma(\R^n)}:=\int_{\R^n}f(x)g(x)\sigma(x)dx.
  \]
 Now we endow $L_0$ with a domain $\mc D(L_0):=C^\infty_{c,\text{rad}}(\R^n)$, which turns it into an unbounded linear operator on $\mc H$. The space $\mc H$ is in fact a natural choice since $L_0$ can be defined in a self-adjoint manner. Namely, there is the following result.

\begin{lemma}\label{Le:L0sa}
The closure of $L_0$ is a self-adjoint operator $\mc L_0: \mc D(\mc L_0) \subseteq \mc H \to \mc H$ which has compact resolvent and generates a strongly continuous one-parameter semigroup $\{S_0(\tau) : \tau \geq 0\}$ on $\mc H$. What is more, $S_0(\tau)$ is explicitly given by 
\begin{align}\label{Eq:FreeSemigroup}
[S_0(\tau) f](x) = e^{-\tau} (G_{\alpha(\tau)} \ast f)(e^{-\tau/2} x)
\end{align}
where $G_{\alpha(\tau)}(x) = [4 \pi \alpha(\tau)]^{-\frac{n}{2}} e^{-|x|^2/4 \alpha(\tau)}$ and $\alpha(\tau) = 1-e^{-\tau}$. 
\end{lemma}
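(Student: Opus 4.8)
The plan is to pass, via a ground-state transformation, from $L_0$ to a Schr\"odinger operator of harmonic-oscillator type, deduce self-adjointness and compactness of the resolvent from classical facts, and then recognize the right-hand side of \eqref{Eq:FreeSemigroup} as the Ornstein--Uhlenbeck (Mehler) semigroup multiplied by $e^{-\tau}$, matching it to the semigroup generated by the closure $\mc L_0$ on the core $C^\infty_{c,\mathrm{rad}}(\R^n)$.

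First I would write $L_0$ in divergence form with respect to the weight $\sigma(x)=e^{-|x|^2/4}$, namely $L_0 f = \sigma^{-1}\nabla\cdot(\sigma\nabla f) - f$, so that for $f\in C^\infty_{c,\mathrm{rad}}(\R^n)$ an integration by parts gives
\[ (L_0 f\,|\,f)_{L^2_\sigma(\R^n)} = -\|\nabla f\|^2_{L^2_\sigma(\R^n)} - \|f\|^2_{L^2_\sigma(\R^n)} \leq -\|f\|^2_{\mc H}, \]
so $L_0$ is symmetric and bounded above by $-1$ on $\mc H$. Next, the unitary map $U\colon \mc H\to L^2_{\mathrm{rad}}(\R^n)$, $Uf=\sigma^{1/2}f=e^{-|x|^2/8}f$, conjugates $L_0$ into $U L_0 U^{-1} = \Delta - \tfrac{1}{16}|x|^2 + c_n$ with $c_n = \tfrac{n}{4} - 1$, acting on radial functions, and $U$ maps $C^\infty_{c,\mathrm{rad}}(\R^n)$ onto itself. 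Since the harmonic oscillator $-\Delta + \tfrac{1}{16}|x|^2$ is essentially self-adjoint on $C^\infty_c(\R^n)$ and commutes with rotations, its restriction to the closed, invariant radial subspace is essentially self-adjoint on $C^\infty_{c,\mathrm{rad}}(\R^n)$; transporting back, $L_0$ is essentially self-adjoint, so its closure $\mc L_0$ is self-adjoint with $\mc L_0\leq -1$. Moreover $U\mc L_0 U^{-1}$ is a shifted harmonic oscillator, whose resolvent is compact (the spectrum is the discrete set of radial Hermite eigenvalues; alternatively the confining term $\tfrac{1}{16}|x|^2$ yields a Rellich-type compact embedding of the form domain into $\mc H$), hence $\mc L_0$ has compact resolvent.

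It remains to produce the semigroup and identify it. By the spectral theorem the self-adjoint, semibounded operator $\mc L_0$ generates a strongly continuous (indeed analytic) semigroup $\{S_0(\tau):\tau\geq 0\}$ with $\|S_0(\tau)\|_{\mc H\to\mc H}\leq e^{-\tau}$. On the other hand, Mehler's formula shows that the right-hand side of \eqref{Eq:FreeSemigroup} defines a strongly continuous semigroup $\{\widetilde S(\tau):\tau\geq 0\}$ of bounded self-adjoint operators on $\mc H$: the $\tau$-dependent dilation together with convolution against the radial heat kernel preserves the radial subspace, $\sigma$ is the invariant measure of the Ornstein--Uhlenbeck flow so the unscaled part is an $L^2_\sigma$-contraction, and the factor $e^{-\tau}$ gives the stated bound. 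A direct computation, using $\partial_\alpha G_\alpha = \Delta G_\alpha$, $\alpha(\tau)=1-e^{-\tau}$ and $\alpha'(0)=1$, shows that for $f\in C^\infty_{c,\mathrm{rad}}(\R^n)$
\[ \lim_{\tau\to 0^+}\frac{\widetilde S(\tau)f - f}{\tau} = \Delta f - \tfrac{1}{2} x\cdot\nabla f - f = L_0 f \quad\text{in }\mc H, \]
so the generator of $\{\widetilde S(\tau)\}$ extends $L_0$; since $C^\infty_{c,\mathrm{rad}}(\R^n)$ is a core for $\mc L_0$ by essential self-adjointness, uniqueness of the semigroup generated by a given operator forces $\widetilde S(\tau)=S_0(\tau)=e^{\tau\mc L_0}$, which is the asserted formula.

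The step I expect to be most delicate is essential self-adjointness, where one must simultaneously control the strong inverse-square-type singularity of the radial operator at the origin in the high-dimensional regime $n=d+2\geq 7$ and the Gaussian confinement at infinity; the ground-state transformation above reduces this to the classical harmonic-oscillator case, but one could equally argue directly on the half-line, where the substitution $h(\rho)=\rho^{(n-1)/2}f(\rho)$ produces a Schr\"odinger potential behaving like $\tfrac{(n-1)(n-3)}{4}\rho^{-2}$ near $0$ (limit point, since $n\geq 4$) and like $\tfrac{1}{16}\rho^2$ at infinity (limit point), so Weyl's theory yields essential self-adjointness with no boundary conditions. The remaining points -- strong continuity of $\{\widetilde S(\tau)\}$ and the justification of the generator computation in the $\mc H$-norm -- are routine, the latter because compact support of $f$ makes the relevant Taylor remainders uniformly controllable against the weight.
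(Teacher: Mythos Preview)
Your proposal is correct and follows essentially the same strategy as the paper: a ground-state/unitary transformation reduces $-L_0$ to a Schr\"odinger operator with a confining harmonic-oscillator potential, from which essential self-adjointness (via limit-point behavior at both endpoints), compactness of the resolvent, and semiboundedness follow by classical results, and the explicit semigroup is then identified as the heat semigroup in rescaled variables. The only cosmetic difference is that the paper carries out the conjugation directly to a half-line operator on $L^2(\R^+)$ (combining the radial reduction and the Gaussian weight removal in one step), whereas your primary route lands on the $n$-dimensional radial harmonic oscillator on $L^2_{\mathrm{rad}}(\R^n)$---but you yourself note the equivalent half-line argument, so the two proofs are interchangeable.
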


\begin{proof}
We start with the observation that $-L_0$ is unitarily equivalent to a one-dimensional Schr\"odinger operator 
\begin{align}\label{Def:DiffExpA0}
A_0 u(\rho) = - u''(\rho) + q(\rho) u(\rho)
\end{align}
with  
\[q(\rho) := \frac{\rho^2}{16} + \frac{(n-3)(n-1)}{4 \rho^2} - \frac{n-4}{4}.\]
More precisely, $- L_0 = U A_0 U^{-1}$ for the unitary map
\[
U: L^2(\R^+) \to \mc H, \quad u \mapsto U u = |\mathbb S^{n-1}|^{-\frac12} |\cdot|^{-\frac{n-1}{2}} e^{|\cdot|^2/8} u(|\cdot|),
\] 
and $\mc D(A_0)=U^{-1} \mc D(L_0)$.
Let $A_{0,c}$ be the restriction of $A_0$ to $C_c^\infty(\R^+)$. Then by standard criteria we see that $A_{0,c}$ is limit-point at both endpoints of the interval $(0,\infty)$, see e.g. \cite{Wei87}, Theorem 6.4, p.~91 and Theorem 6.6, p.~96. 
Therefore (by say Theorem X.7 in \cite{ReeSim75}, p.~152) the closure of $A_{0,c}$  is self-adjoint, and being the only self-adjoint extension, it is necessarily given by the maximal operator $\mc A_0: \mc D(\mc A_0) \subseteq L^2(\R^+)\rightarrow L^2(\R^+) $ where
\begin{align} \label{Def:A0}
\mc D(\mc A_0) := \{ u \in L^2(\R^+): u, u' \in AC_{\mathrm{loc}}(\R^+), A_0 u \in L^2(\R^+) \},
\end{align}
and $\mc A_0 u = A_0 u$ for $u \in \mc D(\mc A_0)$.
Now, since $A_{0,c} \subseteq A_0 \subseteq \mc A_0  =  \overline{A_{0,c}}$ then $\overline{A_0}=\mc A_0$. Furthermore, the growth of $q$ at infinity implies that $\mc A_0$ has compact resolvent. 
Mapping back through $U$ we conclude that $L_0$ is essentially self-adjoint with the closure $\mc L_0 = -U\mc A_0 U^{-1}$ and $\mc D(\mc L_0)=U\mc D(\mc A_0)$. Also, since the potential $q$ is bounded from below, the operator $A_0$, its closure $\mc A_0$, and (by unitary equivalence) the operator $-\mc L_0$ are all bounded from below. The existence of a strongly continuous semigroup on $\mc H$ which is generated by $\mc L_0$ follows.
 
 Since we are simply dealing with the heat equation in rescaled variables, the formula \eqref{Eq:FreeSemigroup} is in fact derived by just transforming the usual heat semigroup. That $S_0(\tau)$ is indeed the semigroup generated by $\mc L_0$ follows from the definition of the semigroup generator. 
\end{proof}

The explicitly given semigroup $S_0(\tau)$ goes under the name Ornstein-Uhlenbeck and will be important for our analysis later on. Now we turn to the analysis of the full linear spatial operator in  Eq.~\eqref{Eq:YM_AbstractPerturbation}.

\begin{proposition}\label{Prop:Spectrum}
The closure $\mc L$ of the operator $ L:  \mc D(L):=\mc D( L_0) \subseteq \mc H  \to \mc H$ is self-adjoint, has compact resolvent, and generates a strongly continuous semigroup $\{ S(\tau) : \tau \geq 0\}$ on $\mc H$.
For the spectrum  of $\mc L$, which consists only of eigenvalues, we have
\begin{equation}\label{Eq:Spectrum_L}
	\sigma( \mathcal L) \subseteq (- \infty, 0) \cup \{1\}. 
\end{equation}
The spectral point $\lambda = 1$ is a simple eigenvalue with an eigenfunction $\mb g = g  / \| g \|_{L^2_{\sigma}(\R^n)}$, where $g(x) = (a |x|^2+b)^{-2}.$
\end{proposition}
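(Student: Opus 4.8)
The plan is to isolate the soft operator-theoretic assertions, which follow from Lemma~\ref{Le:L0sa} by a bounded-perturbation argument, and then to concentrate on the spectral inclusion \eqref{Eq:Spectrum_L}, which carries all the weight. \textbf{Step 1 (soft part).} The potential $V$ in \eqref{Eq:Pot_explicit} is smooth on $[0,\infty)$ --- its denominator never vanishes since $b>0$ for $5\le d\le 9$ --- and $V(\rho)=O(\rho^{-2})$ as $\rho\to\infty$, so $L'$ is a bounded self-adjoint operator on $\mc H$. By Lemma~\ref{Le:L0sa} and the Kato--Rellich theorem, $\mc L=\mc L_0+L'$ is self-adjoint on $\mc D(\mc L_0)$ with $C^\infty_{c,\text{rad}}(\R^n)$ a core; the second resolvent identity together with compactness of $R_{\mc L_0}(\lambda)$ gives compactness of $R_{\mc L}(\lambda)$, and semi-boundedness is inherited from $\mc L_0$, so $\mc L$ generates a strongly continuous semigroup. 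A compact resolvent forces $\sigma(\mc L)$ to consist of isolated eigenvalues of finite multiplicity. Finally, a direct computation verifies that $g(x)=(a|x|^2+b)^{-2}=W(|x|)^2$ lies in $\mc D(\mc L)$ and satisfies $\mc L g=g$; this mode is not an accident but reflects the time-translation freedom of the blow-up family, since differentiating $u_T$ in $T$ at $T=1$, after passing to similarity variables, produces precisely this linearized eigenmode. Hence $1\in\sigma(\mc L)$, and it remains to establish \eqref{Eq:Spectrum_L} together with simplicity of $\lambda=1$.

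\textbf{Step 2 (reduction to a half-line Schrödinger operator).} Conjugating with the unitary $U$ of Lemma~\ref{Le:L0sa} turns $-\mc L$ into the operator $\mc A=-\tfrac{d^2}{d\rho^2}+\bigl(q-V\bigr)$ on $L^2(\R^+)$; since the bounded perturbation $-V$ leaves the limit-point analysis of Lemma~\ref{Le:L0sa} untouched, $\mc A$ is essentially self-adjoint on $C^\infty_c(\R^+)$ with compact resolvent. As $\sigma(-\mc L)=-\sigma(\mc L)=\sigma(\mc A)$, the goal becomes $\sigma(\mc A)\subseteq\{-1\}\cup(0,\infty)$ with $-1$ simple. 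The eigenfunction $g>0$ of $\mc L$ corresponds to a strictly positive eigenfunction $\varphi_0:=U^{-1}g$ of $\mc A$ at the eigenvalue $-1$; by the classical fact that a nodeless eigenfunction of a one-dimensional Schrödinger operator realizes the bottom of the spectrum and that this eigenvalue is simple, $\min\sigma(\mc A)=-1$ is simple. Thus only $\sigma(\mc A)\cap(-1,0]=\varnothing$ remains.

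\textbf{Step 3 (removing $-1$: ground-state / supersymmetric transform).} Put $w:=-\varphi_0'/\varphi_0\in C^\infty(\R^+)$ and $A:=\tfrac{d}{d\rho}+w$, so that $A\varphi_0=0$. The standard commutation (Darboux) argument, legitimate here because both endpoints are limit-point, gives $\mc A+1=\overline{A^\ast A}$, and the partner operator
\[
\widehat{\mc A}:=\overline{AA^\ast}=-\tfrac{d^2}{d\rho^2}+w^2+w'
\]
is non-negative, has compact resolvent, and satisfies $\sigma(\widehat{\mc A})=\sigma(\mc A+1)\setminus\{0\}$, the ground state being genuinely removed because $A^\ast v=0$ forces $v\propto 1/\varphi_0\notin L^2(\R^+)$. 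Hence $\sigma(\mc A)\cap(-1,0]=\varnothing$ is equivalent to $\widehat{\mc A}>1$. From $\varphi_0(\rho)\simeq\rho^{(n-1)/2}e^{-\rho^2/8}(a\rho^2+b)^{-2}$ one reads off $w(\rho)=-\tfrac{n-1}{2\rho}+\tfrac{\rho}{4}+\tfrac{4a\rho}{a\rho^2+b}$, so the potential $w^2+w'-1$ of $\widehat{\mc A}-1$ is smooth, blows up to $+\infty$ like $\tfrac{(n-1)(n+1)}{4\rho^2}$ at the origin and like $\tfrac{\rho^2}{16}$ at infinity, and dips below $0$ only on a bounded interval.

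\textbf{Step 4 (the crux: $\widehat{\mc A}>1$).} Here I would invoke the general integral criterion for the absence of negative spectrum of radial Schrödinger operators of the type \eqref{Def:Af} announced in the introduction, applied to $\widehat{\mc A}-1$. Splitting its potential as $p_+-p_-$ with $p_-\ge 0$ supported in a bounded set and using a positive supersolution $h>0$ of $-h''+p_+h\ge 0$, the criterion should reduce positivity of $\widehat{\mc A}-1$ --- and hence, the spectrum being discrete, $\sigma(\widehat{\mc A})\subseteq(1,\infty)$, i.e.\ \eqref{Eq:Spectrum_L} with $\lambda=1$ simple --- to a single explicit scalar inequality, morally of Hardy/Muckenhoupt type and roughly of the shape $\int_0^\infty p_-(\rho)\,h(\rho)^2\,d\rho<c_h$ with $c_h$ a constant attached to the weight $h^2\,d\rho$. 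One then checks this numerical inequality for the explicit $w$ above, separately for each of the finitely many admissible dimensions $7\le n\le 11$. I expect this step to be the main obstacle, on two counts: establishing the criterion in a form robust enough to need only crude bounds on $\varphi_0$ (so that, as advertised, the method survives when the soliton is not explicit), and picking a good comparison function $h$ while estimating the negative dip of $w^2+w'-1$ sharply enough that the integral bound holds with room to spare. The remaining points --- that strict positivity of the bound (or a Sturm oscillation argument) upgrades $\widehat{\mc A}\ge 1$ to $\widehat{\mc A}>1$, thereby also excluding $0$ from $\sigma(\mc A)$ --- are routine.
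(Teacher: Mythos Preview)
Your Steps 1--3 track the paper's proof almost exactly: the bounded-perturbation argument for the soft part, the unitary reduction to the half-line Schr\"odinger operator $\mc A$, and the supersymmetric (Darboux) removal of the ground state at $-1$ to produce a partner operator $\mc A_S$ are all carried out in the paper in the same way, with the same explicit $w$ and the same conclusion that everything reduces to showing the partner operator has no non-positive eigenvalues.

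The divergence is in Step 4, and it is not a gap so much as a wrong guess about the shape of the criterion. The paper's integral bound (Theorem~\ref{Thm:GGMT}) is \emph{not} of Hardy/Muckenhoupt type with a chosen supersolution $h$; it is a GGMT-type criterion. Writing the partner potential as $\tfrac{n^2-1}{4\rho^2}+Q(\rho)$, the condition is that for \emph{some} $p\ge 1$,
\[
B(n,p)=\frac{(p-1)^{p-1}\Gamma(2p)}{n^{2p-1}p^p\,\Gamma(p)^2}\int_0^\infty \rho^{2p-1}|Q_-(\rho)|^p\,d\rho<1.
\]
The crucial feature you are missing is the free exponent $p$: no comparison function has to be guessed, the weight $\rho^{2p-1}$ is forced by the inverse-square part, and one simply tunes $p$ (the paper uses $p=4$ or $p=6$ depending on $n$) until the integral is small enough. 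Because $Q$ is an explicit rational function, partial fractions reduce the check to an elementary computation for each $n$. Your proposed supersolution route might succeed in principle, but it leads straight into the obstacle you yourself flag---picking a good $h$ and estimating the dip sharply enough---whereas the GGMT criterion with adjustable $p$ sidesteps that entirely.
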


\begin{proof}
Boundedness of the potential $V$ implies boundedness of the operator $L'$ on $\mc H$. Therefore $\mc L = \mc L_0 + L'$ and $\mc D(\mc L)=\mc D(\mc L_0)$. Furthermore, the Kato-Rellich theorem implies self-adjointness of $\mc L$,  see e.g. \cite{Tes09}, Theorem 6.4,  p.~135. Similarly, $\mc L$ has compact resolvent.  Furthermore, by the bounded perturbation theorem, the operator $\mc L$  generates a strongly continuous semigroup $\{ S(\tau) : \tau \geq 0\}$ on $\mc H$.

For the structure of the spectrum of $\mc L$, it suffices to investigate the unitarily equivalent Schr\"odinger operator $-\mc A: \mc D(\mc A) \subseteq L^2(\R^+) \to L^2(\R^+)$ defined by $ \mc D(\mc A) =  \mc D(\mc A_0)$, where $\mc A u  =  \mc A_0 u + V u$  with $\mc A_0$ from the proof of Lemma \ref{Le:L0sa}. By a straightforward calculation we see that the function
\begin{equation*}
\tilde g(\rho )= \rho^{\frac{n-1}{2}}e^{-\frac{\rho^2}{8}} (a \rho^2+b)^{-2}
\end{equation*}
belongs to $\mc D(\mc A)$ and satisfies $(A_0+V)\tilde{g}=-\tilde{g}$. Hence $-1\in\sigma(\mc A)$, i.e. $\la=1$ is an eigenvalue of $\mc L$ with an eigenfunction $\mb g$.
It remains to show that $\la=-1$ is the only non-positive spectral point of $\mc A$. Note that the fact that $\tilde{g}$ is strictly positive can be used in conjunction with a Sturm-Liouville oscillation theorem to rule out eigenvalues $\la < -1$. But this leaves the interval $(-1,0]$ untreated, and therefore different approach is necessary.

In the rest of the proof we exhibit a general method for proving spectral claims of type \eqref{Eq:Spectrum_L}. First, the fact that $\tilde g$ is strictly positive allows for a ``removal" of $\la=-1$ via a standard trick from supersymmetric quantum mechanics, see e.g. \cite{CosDonGlo17}, Sec.~3.1. More precisely, we first factorize $\mc A = A^{+}A^{-} - 1$ such that the kernel of $A^{-}$ is spanned by $\tilde g$. Then the corresponding supersymmetric expression $A^{-} A^{+} -1$  gives rise to a (maximally defined) self-adjoint operator $\mc A_S: \mc D(\mc A_S) \subseteq L^2(\R^+) \to L^2(\R^+)$ which is (modulo $\la=-1$) isospectral with $\mc A$. On $C^{\infty}_c(\R^+)$ as its core, the operator $\mc A_S$ is given by 
 \begin{equation}\label{Def:A_s}
   \mathcal{\mc A_S}u(\rho)=-u''(\rho)+\frac{n^2-1}{4\rho^2} u(\rho)+ Q(\rho)u(\rho),
 \end{equation}
with 
\begin{equation*}
	{Q}(\rho)= \frac{\rho^2}{16} - \frac{n}{4} + \frac{3}{2}-2\frac{a(2a(n-4)+b)\rho^2+b(2a(n-2)+b)}{(a\rho^2+b)^2}.
\end{equation*}
Now, it remains to show that $\mc A_S$ has no non-positive eigenvalues. To prove this we establish an integral criterion for the absence of negative spectrum for a class of operators which, in particular, contains $\mc A_\mc S$, see Appendix \ref{Sec:GGMT}.
Let $Q_-(\rho):=\min \{Q(\rho),0\}$ and
	\begin{equation}\label{Def:GGMT}
		B(n,p):=\frac{(p-1)^{p-1}\Gamma(2p)}{n^{2p-1}p^p \Gamma(p)^2}\int_{0}^{+\infty}\rho^{2p-1}|Q_-(\rho)|^p d\rho.
	\end{equation}
According to Theorem \ref{Thm:GGMT}, to rule out non-negative eigenvalues of $\mc A_S$ it is enough to prove that for every $n$ there is a choice of $p\geq1$ such that  
	\begin{equation}\label{B(n,p)}
		B(n,p)<1.
	\end{equation}  
We now fix $n=8$, and show that $B(8,4)<1$.
Since $Q(\rho)>0$ for $\rho \geq \frac{47}{10}$, we have
	\begin{equation}\label{Eq:bound}
		B(8,4)<\frac{945}{8^9}\int_{0}^{\frac{47}{10}}\rho^7Q(\rho)^4d\rho.
	\end{equation}
The integral in Eq.~\eqref{Eq:bound} can of course be easily computed numerically. However, note that the integrand is an odd rational function and has a unique partial fraction decomposition of the following form 
	\begin{equation*}
		\rho^7Q(\rho)^4=\sum_{i=0}^{6}a_i \rho^{2i+1}+\sum_{i=1}^{8}\frac{b_i \rho}{(a \rho^2+b)^i},
	\end{equation*}
for some constants $a_i, b_i$. Hence, the integral in \eqref{Eq:bound} can be explicitly computed, and this yields $B(8,4)<1$. In the same way we prove that the same holds for $B(9,4), B(10,6)$ and $B(11,6)$.
\end{proof}
\begin{remark}\label{Rem:Spectrum}
	Donninger and the second author \cite{DonSch19} proved the claim \eqref{Eq:Spectrum_L} for $d=5$ with an ad hoc method which does not carry over to higher dimensions. Also, Biernat and Donninger \cite{BieDon18} managed to prove an analogous spectral result for the harmonic map heat flow in $d=3$ by constructing a zero energy solution to the underlying spectral equation and counting the number of its oscillations. Our approach on the other hand is much softer, non-constructive, robust and likely yields a considerably simpler proof than the one in \cite{BieDon18}.

	The fact that $W$ (and hence the potential $Q$) is an explicit rational function made the above integral bound argument particularly simple. We however claim that the same approach works for arbitrary, not necessarily explicit, solutions.
	Namely, note that the condition \eqref{B(n,p)} holds for an ``open" set of potentials that are ``close" to $Q_-$. 
	Therefore, if a conjecturally stable self-similar solution is not known in closed form (say it is only observed numerically) then constructing a good enough approximation to it allows for proving, first that such solution exists and second that the estimate \eqref{B(n,p)} holds. Our procedure therefore provides a general method for proving spectral stability of (not necessarily explicit) self-similar solutions.
\end{remark}

The eigenvalue $\la=1$ is an artefact of the underlying time translation symmetry which will be controlled later on by modulating the blowup time $T$ in Eq.~\eqref{Eq:YM_AbstractPerturbation}. We therefore proceed with introducing the orthogonal projection onto the corresponding unstable mode $\mb g$,
\begin{align}\label{Def:OrthProj}
\mc P f := (f|\mb g)_{L^2_{\sigma}(\R^n)} \mb g,
\end{align}
and studying the linear flow on the complementary (stable) subspace.
More precisely, as a consequence of the definition of $\mc P$ and Proposition \ref{Prop:Spectrum} we have the following result.
\begin{corollary}\label{Cor:Decay_S}
	There exists $\omega_0>0$ such that $S(\tau) \mc P  f =  e^{\tau} \mc Pf$ and
	\begin{equation*}
		\|	S(\tau) (1- \mc P)  f \|_{{L^2_{\sigma}(\R^n)} }  \leq e^{-\omega_0 \tau} \|(1- \mc P)  f \|_{L^2_{\sigma}(\R^n)}
	\end{equation*}
	for all $f\in\mc H.$
\end{corollary}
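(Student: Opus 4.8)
The plan is to deduce Corollary~\ref{Cor:Decay_S} directly from Proposition~\ref{Prop:Spectrum} together with the self-adjoint functional calculus on $\mc H$. Since $\mc L$ is self-adjoint with compact resolvent and $\sigma(\mc L)\subseteq(-\infty,0)\cup\{1\}$, the spectral theorem gives an orthogonal decomposition $\mc H=\mathrm{ran}(\mc P)\oplus\ker(\mc P)$ into $\mc L$-invariant subspaces, where $\mc P$ is the (rank-one) spectral projection onto the eigenspace for $\lambda=1$; because $\lambda=1$ is a simple eigenvalue with eigenfunction $\mb g$, this $\mc P$ coincides with the explicit orthogonal projection in~\eqref{Def:OrthProj}. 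On $\mathrm{ran}(\mc P)$ the semigroup acts as $S(\tau)\mc Pf=e^{\tau}\mc Pf$, which is the first claim.

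For the second claim, first I would note that $\sigma(\mc L)\setminus\{1\}\subseteq(-\infty,0)$ is a closed subset of $\mathbb R$ (closedness of the spectrum) that, by compactness of the resolvent, consists only of eigenvalues accumulating at most at $-\infty$. Hence $\sup\big(\sigma(\mc L)\setminus\{1\}\big)=:-\omega_0<0$ is attained (it is the largest eigenvalue below $0$), so there is a genuine spectral gap: $\sigma\big(\mc L\rst{\ker\mc P}\big)\subseteq(-\infty,-\omega_0]$. The restriction $\mc L\rst{\ker\mc P}$ is again self-adjoint and bounded above by $-\omega_0$, so by the spectral theorem / functional calculus the semigroup it generates, namely $S(\tau)(1-\mc P)$, satisfies the operator-norm bound $\|S(\tau)(1-\mc P)f\|_{L^2_\sigma(\R^n)}\le e^{-\omega_0\tau}\|(1-\mc P)f\|_{L^2_\sigma(\R^n)}$ for all $f\in\mc H$ and $\tau\ge0$. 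One small point to address is that $S(\tau)$ commutes with $\mc P$ (and with $1-\mc P$), which follows since $\mc P$ is a spectral projection of the generator $\mc L$; this legitimizes writing $S(\tau)(1-\mc P)=(1-\mc P)S(\tau)(1-\mc P)$ and reducing to the restricted operator.

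Concretely, I would phrase the argument via the functional calculus: write $\mc L=\int_{\sigma(\mc L)}\lambda\,dE(\lambda)$, set $\mc P=E(\{1\})$, and observe $1-\mc P=E\big((-\infty,-\omega_0]\big)$ by the gap. Then $S(\tau)(1-\mc P)=\int_{(-\infty,-\omega_0]}e^{\lambda\tau}\,dE(\lambda)$, whence $\|S(\tau)(1-\mc P)f\|^2_{L^2_\sigma(\R^n)}=\int_{(-\infty,-\omega_0]}e^{2\lambda\tau}\,d\|E(\lambda)f\|^2\le e^{-2\omega_0\tau}\|(1-\mc P)f\|^2_{L^2_\sigma(\R^n)}$, using $e^{2\lambda\tau}\le e^{-2\omega_0\tau}$ on the support. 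This is entirely routine once Proposition~\ref{Prop:Spectrum} is in hand.

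There is essentially no real obstacle here; the proof is a textbook consequence of self-adjointness plus a spectral gap. The only thing one must be slightly careful about is justifying that the supremum of the negative spectrum is strictly negative — i.e. that $0$ is not an accumulation point of eigenvalues from below. This is exactly where compactness of the resolvent of $\mc L$ (established in Proposition~\ref{Prop:Spectrum}) is used: it forces the spectrum to be discrete with finite multiplicities and no finite accumulation point, so the eigenvalues in $(-\infty,0)$ can accumulate only at $-\infty$, and therefore $-\omega_0:=\sup(\sigma(\mc L)\setminus\{1\})<0$ is a well-defined negative number.
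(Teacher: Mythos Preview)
Your proposal is correct and is precisely the intended argument: the paper does not give a separate proof but states the corollary as an immediate consequence of the definition of $\mc P$ and Proposition~\ref{Prop:Spectrum}, and your spectral-theorem derivation (using self-adjointness, compactness of the resolvent to guarantee a genuine gap, and the functional calculus bound on $\ker\mc P$) is exactly how one unpacks that. There is nothing to add.
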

For our further analysis, we also need growth estimates for the semigroup in the graph norm of fractional powers of $1-\mc L$. First we ensure that such operators exist.
\begin{corollary}\label{Le:Graph_L}
	There is a unique self-adjoint, positive operator $(1 - \mc L)^{\frac{1}{2}}$ with  $C^\infty_{c,\mathrm{rad}}(\R^n)$ as a core, such that $[(1 - \mc L)^{\frac{1}{2}}]^2 = 1 - \mc L$. Furthermore, the square root commutes with any bounded operator that commutes with $\mc L$. 
\end{corollary}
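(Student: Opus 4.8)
The plan is to read off every assertion from the spectral theorem applied to the self-adjoint operator $\mc L$ furnished by Proposition~\ref{Prop:Spectrum}. First I would observe that, because $\sigma(\mc L)\subseteq(-\infty,0)\cup\{1\}$, the self-adjoint operator $1-\mc L$ (with $\mc D(1-\mc L)=\mc D(\mc L)$) has spectrum contained in $\{0\}\cup(1,\infty)\subseteq[0,\infty)$ and is therefore nonnegative; the spectral point $0$, originating from $\la=1\in\sigma(\mc L)$, is precisely why ``positive'' has to be read in the semidefinite sense. The existence and uniqueness of a nonnegative self-adjoint square root is then the standard square-root lemma for nonnegative self-adjoint operators: writing $E$ for the spectral measure of $1-\mc L$, one sets $(1-\mc L)^{1/2}:=\int_{[0,\infty)}\sqrt{t}\,dE(t)$, which is self-adjoint and nonnegative, satisfies $[(1-\mc L)^{1/2}]^2=\int t\,dE(t)=1-\mc L$ by multiplicativity of the functional calculus, and is the only nonnegative self-adjoint operator with this property (see, e.g., \cite{ReeSim75} or \cite{Tes09}).

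For the commutation statement I would use that, for a bounded operator $B$, the inclusion $B\mc L\subseteq\mc L B$ is equivalent to $B$ commuting with the resolvent $R_{\mc L}(\la)$ for some (hence every) $\la\in\rho(\mc L)$, and this is in turn equivalent to $B$ commuting with every spectral projection $E(\Omega)$ of $\mc L$, equivalently of $1-\mc L$. Since $(1-\mc L)^{1/2}$ is the strong limit of the bounded operators $\int_{[0,m]}\sqrt{t}\,dE(t)$, each of which commutes with $B$, it follows that $B$ leaves $\mc D((1-\mc L)^{1/2})$ invariant and commutes with $(1-\mc L)^{1/2}$.

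It remains to exhibit a core. By Proposition~\ref{Prop:Spectrum}, $\mc L$ is the closure of $L$ with $\mc D(L)=\mc D(L_0)=C^\infty_{c,\mathrm{rad}}(\R^n)$, so this space is a core for $\mc L$, hence for $1-\mc L$. I would then invoke the elementary fact that any core $D$ for a nonnegative self-adjoint operator $A$ is automatically a core for $A^{1/2}$, proved in two steps: $\mc D(A)$ is a core for $A^{1/2}$ because for $\psi\in\mc D(A^{1/2})$ the truncations $\psi_m:=E_A([0,m])\psi$ lie in $\mc D(A)$ with $\psi_m\to\psi$ and $A^{1/2}\psi_m\to A^{1/2}\psi$; and if $\psi\in\mc D(A)$ and $\phi_k\in D$ with $\phi_k\to\psi$, $A\phi_k\to A\psi$, then
\[
\|A^{1/2}(\phi_k-\psi)\|^2=\bigl(A(\phi_k-\psi)\,\big|\,\phi_k-\psi\bigr)\leq\|A(\phi_k-\psi)\|\,\|\phi_k-\psi\|\longrightarrow 0,
\]
so $\phi_k\to\psi$ in the graph norm of $A^{1/2}$; composing the two approximations gives that $D$ is a core for $A^{1/2}$. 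Applying this with $A=1-\mc L$ and $D=C^\infty_{c,\mathrm{rad}}(\R^n)$ finishes the proof. There is no real obstacle here — this is textbook operator theory — and the only point that requires more than a citation is this last one: the passage from a core of $1-\mc L$ to a core of $(1-\mc L)^{1/2}$ is not formal and rests on the graph-norm estimate displayed above.
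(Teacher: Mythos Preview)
Your proposal is correct and follows essentially the same route as the paper: existence, uniqueness, and the commutation property are dismissed as standard spectral-theoretic facts, and the core assertion is obtained by first noting that $\mc D(\mc L)$ is a core for $(1-\mc L)^{1/2}$ and then passing from the core $C^\infty_{c,\mathrm{rad}}(\R^n)$ of $\mc L$ to a core of $(1-\mc L)^{1/2}$ via the graph-norm estimate $\|(1-\mc L)^{1/2}f\|^2=((1-\mc L)f\,|\,f)\lesssim \|(1-\mc L)f\|^2+\|f\|^2$. You spell out more detail (the spectral truncation argument and the Cauchy--Schwarz step) than the paper, which simply records the inequality $\|(1-\mc L)^{1/2}f\|\lesssim\|(1-\mc L)f\|+\|f\|$, and you correctly flag that ``positive'' must be read as nonnegative since $0\in\sigma(1-\mc L)$.
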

\begin{proof}
The existence and the basic properties of the square root are standard results. It remains to prove that $C^\infty_{c,\mathrm{rad}}(\R^n)$ is a core of $(1-\mc L)^\frac{1}{2}$. To that end, let $f \in \mc D((1 - \mc L)^{\frac{1}{2}})$ and $\varepsilon >0$ be arbitrary. 
The fact that  $\mc D(\mc L)$ is core for $(1 - \mc L)^{\frac{1}{2}}$ and $C^\infty_{c,\mathrm{rad}}(\R^n)$ is a core for $\mc D(\mc L)$ implies that there is a $\tilde f \in C^\infty_{c,\mathrm{rad}}(\R^n)$ such that $\|f - \tilde f \|_{L^2_{\sigma}(\R^n)} + \|(1- \mc L)^{\frac{1}{2}} (f - \tilde f) \|_{L^2_{\sigma}(\R^n)} < \varepsilon$ by using that $\| (1- \mc L)^{\frac{1}{2}} f\|_{L^2_{\sigma}(\R^n)} \lesssim \| (1- \mc L)f\|_{L^2_{\sigma}(\R^n)}  + \|f \|_{L^2_{\sigma}(\R^n)}$. 
\end{proof}
We now introduce the graph norms associated with a powers of $(1- \mc L)^{\frac12}$,
\begin{align}
\| f\|_{\mc G((1- \mc L)^{k/2})} := \|f \|_{L^2_{\sigma}(\R^n)} + \|(1- \mc L)^{k/2} f \|_{L^2_{\sigma}(\R^n)}
\end{align}
for $k \in \N_0, f \in \mc D((1- \mc L)^{k/2})$, and then we derive a bound on the growth of $S(\tau)$.  
\begin{proposition}\label{Prop:Growthbounds_Graph}
There exists $\omega_0 >0$ such that
\[\|S(\tau)  (1 -\mc P)  f \|_{\mc G((1- \mc L)^{k/2})}  \leq  e^{-\omega_0 \tau} \|(1 - \mc P) f \|_{\mc G((1- \mc L)^{k/2})} \]
for all $k \in \N_0$, $f \in \mc D((1- \mc L)^{k/2})$ and $\tau \geq 0$.
\end{proposition}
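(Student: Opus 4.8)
The plan is to exploit that $S(\tau)$, the spectral projection $\mc P$, and the fractional powers $(1-\mc L)^{k/2}$ are all functions of the single self-adjoint operator $\mc L$; hence they mutually commute, and the graph-norm estimate reduces to the plain $L^2_\sigma$-estimate already established in Corollary~\ref{Cor:Decay_S}. The exponent $\omega_0$ will be the same one produced there.

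First I would record the relevant commutation properties. Since $\mc L$ has compact resolvent and $\la=1$ is an isolated eigenvalue, $\mc P$ is the associated Riesz/spectral projection, i.e. a bounded function of $\mc L$, and in particular $\mc P$ commutes with $\mc L$ and leaves $\mc D(\mc L)$ invariant; by Corollary~\ref{Le:Graph_L} (extended to $(1-\mc L)^{k/2}$ either directly through the functional calculus of the positive self-adjoint operator $1-\mc L$, or by writing $(1-\mc L)^{k/2}$ as a product of $(1-\mc L)^{1/2}$ and integer powers of $1-\mc L$) it follows that $\mc P$ leaves $\mc D((1-\mc L)^{k/2})$ invariant and satisfies $(1-\mc L)^{k/2}(1-\mc P)f=(1-\mc P)(1-\mc L)^{k/2}f$ on that domain. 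The identical statements hold for the bounded operator $S(\tau)=e^{\tau\mc L}$, again because it is a function of $\mc L$.

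Then I would fix $k\in\N_0$, $f\in\mc D((1-\mc L)^{k/2})$, $\tau\ge 0$, put $h:=(1-\mc L)^{k/2}f\in\mc H$, and use the commutation relations to get
\[
(1-\mc L)^{k/2}\,S(\tau)(1-\mc P)f = S(\tau)(1-\mc P)(1-\mc L)^{k/2}f = S(\tau)(1-\mc P)h .
\]
Applying Corollary~\ref{Cor:Decay_S} once to $f$ and once to $h$ and adding the two resulting inequalities then yields
\[
\|S(\tau)(1-\mc P)f\|_{L^2_\sigma(\R^n)}+\|(1-\mc L)^{k/2}S(\tau)(1-\mc P)f\|_{L^2_\sigma(\R^n)}\le e^{-\omega_0\tau}\big(\|(1-\mc P)f\|_{L^2_\sigma(\R^n)}+\|(1-\mc L)^{k/2}(1-\mc P)f\|_{L^2_\sigma(\R^n)}\big),
\]
which is precisely the asserted bound in the graph norm $\mc G((1-\mc L)^{k/2})$.

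The only step that needs genuine care is the functional-calculus bookkeeping in the first paragraph: one must verify that the bounded operators $S(\tau)$ and $\mc P$ really do map $\mc D((1-\mc L)^{k/2})$ into itself and commute with the closed unbounded operator $(1-\mc L)^{k/2}$ on that domain. Since the excerpt constructs $(1-\mc L)^{1/2}$ explicitly but not the higher powers, I expect the cleanest route is to phrase the argument via the spectral theorem for $\mc L$ — under which $(1-\mc L)^{k/2}$, $S(\tau)$ and $\mc P$ are simultaneously realized as multiplication operators, making every commutation manifest — or, failing that, to reduce the general $k$ to $k\in\{0,1\}$ together with integer powers of $1-\mc L$, for which the commutation with $S(\tau)$ and $\mc P$ follows from $S(\tau)\mc D(\mc L)\subseteq\mc D(\mc L)$, $\mc P\,\mc D(\mc L)\subseteq\mc D(\mc L)$, and Corollary~\ref{Le:Graph_L}. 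Once that is in place, the estimate itself is the two-line application of Corollary~\ref{Cor:Decay_S} displayed above.
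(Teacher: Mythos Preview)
Your proposal is correct and follows essentially the same approach as the paper: use that $\mc P$, $S(\tau)$, and $(1-\mc L)^{k/2}$ mutually commute (as functions of the self-adjoint operator $\mc L$), then reduce the graph-norm estimate to two applications of Corollary~\ref{Cor:Decay_S}. Your discussion of the domain and commutation issues is in fact more careful than the paper's, which simply asserts the commutation and proceeds.
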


\begin{proof}
The operators $\mc P$, $\mc L$ and $S(\tau)$ mutually commute. This implies that $(1-\mc L)^{\frac12}$ commutes with the projection and the semigroup and thus, the same holds for $(1-\mc L)^{\frac{k}{2}}$, $k \in \N$. From this and Corollary \ref{Cor:Decay_S} it follows that for all $f \in \mc D((1- \mc L)^{k/2})$,
\begin{align*}
\|(1- \mc L)^{k/2}  S(\tau)   (1- \mc P) f \|_{L^2_{\sigma}(\R^n)}  &   =  \|S(\tau) (1- \mc P) (1- \mc L)^{k/2} f \|_{L^2_{\sigma}(\R^n)} \\
&  \leq e^{-\omega_0 \tau} \|(1- \mc P) (1- \mc L)^{k/2}   f \|_{L^2_{\sigma}(\R^n)} \\
&   =  e^{-\omega_0 \tau} \|(1- \mc L)^{k/2} (1- \mc P)f \|_{L^2_{\sigma}(\R^n)},
\end{align*}
which implies the claim.
\end{proof}
The space $\mc H$ is not closed under multiplication, which makes it inconvenient for the analysis of the full (nonlinear) Eq.~\eqref{Eq:YM_AbstractPerturbation}. However,  the space $X$ introduced in Section \ref{Sec:MainResult} allows for such analysis. In the following section we embed $X$ continuously in $\mc H$ and prove a number of its features that will be important later on. 

\subsection{Properties of the space $X$} \label{Sec:X}

We first recall definitions \eqref{Def:Radial_Operators} and \eqref{Def:kappas} and then endow $C^\infty_{c,\mathrm{rad}}(\R^n)$ with an inner product
\begin{equation*}
	(f|g)_X := (  D^{\kappa_0}  f | D^{\kappa_0}  g)_{L^2(\R^n)}	+ (  D^{\kappa_1}  f | D^{\kappa_1}  g)_{L^2(\R^n)}.
\end{equation*}
Note that the norm induced by this inner product is given in \eqref{Def:NormX} and therefore the completion of the pre-Hilbert space $(C^\infty_{c,\mathrm{rad}}(\R^n),(\cdot|\cdot)_X)$ is precisely the space $X$. We proceed with proving a crucial embedding property of $X$.
\begin{lemma}\label{Lem:EmbXH}
The space $X$ can be continuously embedded into $\mc H$. 
\end{lemma}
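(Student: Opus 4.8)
The plan is to prove the stronger statement that the $X$-norm controls the $L^\infty$-norm on $\R^n$; the embedding into $\mc H=L^2_\sigma(\R^n)$ then follows for free, since the Gaussian weight $\sigma$ is integrable. So it suffices to establish, for all $f\in C^\infty_{c,\mathrm{rad}}(\R^n)$, the bound $\|f\|_{L^\infty(\R^n)}\lesssim\|f\|_X$, after which density of $C^\infty_{c,\mathrm{rad}}(\R^n)$ in $X$ together with
\[
\|f\|_{\mc H}^2=\int_{\R^n}|f(x)|^2\sigma(x)\,dx\le\|\sigma\|_{L^1(\R^n)}\,\|f\|_{L^\infty(\R^n)}^2
\]
finishes the argument; the same $L^\infty$-bound also shows that the abstract completion $X$ is realised as a genuine space of bounded continuous functions, so that the two embeddings are consistent.

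For the key bound I would pass to the Fourier side. Using Plancherel's theorem together with the definition \eqref{Def:Radial_Operators} of $D^k$ — noting that $\widehat{D^kf}(\xi)$ has modulus $|\xi|^k|\hat f(\xi)|$ both when $k$ is even ($D^k=\Delta^{k/2}$) and when $k$ is odd ($D^k=\nabla\Delta^{(k-1)/2}$) — one gets
\[
\|f\|_X^2=\int_{\R^n}\bigl(|\xi|^{2\kappa_0}+|\xi|^{2\kappa_1}\bigr)\,|\hat f(\xi)|^2\,d\xi .
\]
A Cauchy--Schwarz estimate with this weight, followed by Fourier inversion, then yields
\[
\|f\|_{L^\infty(\R^n)}\le(2\pi)^{-n/2}\|\hat f\|_{L^1(\R^n)}\le(2\pi)^{-n/2}\Bigl(\int_{\R^n}\frac{d\xi}{|\xi|^{2\kappa_0}+|\xi|^{2\kappa_1}}\Bigr)^{1/2}\|f\|_X ,
\]
so everything reduces to the finiteness of the last integral.

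The finiteness of $\int_{\R^n}(|\xi|^{2\kappa_0}+|\xi|^{2\kappa_1})^{-1}\,d\xi$ is the only place where the precise values matter, and it is the step I would regard as the heart of the lemma; everything else is routine. Since the integrand is comparable to $|\xi|^{-2\kappa_0}$ near $\xi=0$ and to $|\xi|^{-2\kappa_1}=|\xi|^{-2\kappa_0-4}$ for large $|\xi|$, convergence at both ends is equivalent to the chain $n-4<2\kappa_0<n$ (equivalently $\kappa_0<\tfrac n2<\kappa_1$, the classical endpoint condition for $\dot H^{\kappa_0}\cap\dot H^{\kappa_1}\hookrightarrow L^\infty$). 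From the definition \eqref{Def:kappas}, $2\kappa_0=n-3$ when $n$ is odd and $2\kappa_0=n-2$ when $n$ is even, so in either case $n-4<2\kappa_0<n$; in particular this holds for all $n=d+2$ with $5\le d\le 9$ (and, more generally, for every $n\ge 5$, which is why the method is not tied to these dimensions). This completes the plan.
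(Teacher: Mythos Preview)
Your approach is essentially the same as the paper's: both establish $\|f\|_{L^\infty(\R^n)}\lesssim\|f\|_X$ via the Fourier transform and then use integrability of the Gaussian weight to conclude $X\hookrightarrow\mc H$. The only cosmetic difference is that the paper splits $\|\hat f\|_{L^1}$ into $\B^n$ and $\R^n\setminus\B^n$ and applies Cauchy--Schwarz with the separate weights $|\xi|^{-\kappa_0}$ and $|\xi|^{-\kappa_1}$, whereas you use a single Cauchy--Schwarz with the combined weight $(|\xi|^{2\kappa_0}+|\xi|^{2\kappa_1})^{-1}$; both reduce to the same integrability condition $\kappa_0<\tfrac n2<\kappa_1$.

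One point worth tightening: your last sentence (``the abstract completion $X$ is realised as a genuine space of bounded continuous functions'') asserts injectivity of the map from the abstract completion into $\mc H$ without proof. The paper spells this out: given a Cauchy sequence $(f_j)\subset C^\infty_{c,\mathrm{rad}}(\R^n)$ with $f_j\to f$ in $X$ and $f_j\to 0$ in $\mc H$, one shows $f=0$ by observing that $D^{\kappa_0}f_j$ and $D^{\kappa_1}f_j$ converge both strongly in $L^2$ and distributionally, and that the distributional limit is forced to be zero since $(D^kf_j\,|\,\varphi)_{L^2}=(f_j\,|\,D^k\varphi)_{L^2}\to 0$ for every test function $\varphi$. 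Your $L^\infty$-bound alone does not obviously give this, since $\|\cdot\|_X$ is strictly stronger than $\|\cdot\|_{L^\infty}$.
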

\begin{proof}
	For $f\in C^\infty_{c,\mathrm{rad}}(\R^n)$ we have
	\begin{align}\label{Eq:L_inf}
	\| f \|_{L^{\infty}(\R^n)}  \lesssim & \| \mc F f\|_{L^1(\B^n)} + \| \mc F f\|_{L^1(\R^n \setminus \B^n)} \nonumber  \\
	 \lesssim &\| |\cdot|^{-\kappa_0} \|_{L^2 (\B^n)}   \| |\cdot|^{\kappa_0} \mc F f\|_{L^2 (\B^n)}    + \| |\cdot|^{-\kappa_1} \|_{L^2(\R^n \setminus \B^n)}  \| |\cdot|^{\kappa_1} \mc F f\|_{L^2(\R^n \setminus \B^n)} \nonumber \\
	\lesssim & \| D^{\kappa_0}f \|_{L^2(\R^n)}+\| D^{\kappa_1}f \|_{L^2(\R^n)}\lesssim
	\| f \|_X. 
	\end{align}
In view of the exponential decay of the weight function, we immediately obtain that 
\begin{equation}\label{Eq:XembH}
	\| f \|_{L^2_{\sigma}(\R^n)} \lesssim \|f \|_{L^{\infty}(\R^n)} \lesssim \|f \|_X
\end{equation}
for all $f  \in C^\infty_{c,\mathrm{rad}}(\R^n)$.  Now, let $f \in X$. Then there is a sequence $(f_j)_{j \in \N} \subseteq  C^\infty_{c,\mathrm{rad}}(\R^n)$ such that $f_j \to f$ in $X$.
By the above inequality $(f_j)_{ j\in \N}$ is a Cauchy sequence in $\mc H$ and we denote its limit by $g$. We define $\iota: X \to \mc H$ by $\iota(f) := g$ and show that it is injective. In fact, if $\iota(f) = 0$, there is a sequence  $(f_j)_{j \in \N} \subseteq  C^\infty_{c,\mathrm{rad}}(\R^n)$ such that $f_j \to f$ in $X$ and $f_j \to 0$ in $\mc H$. Assume that $f \neq 0$. 
Then $(D^{\kappa_0} f_j)_{j \in \N}$ and $(D^{\kappa_1} f_j)_{j \in \N}$ are Cauchy sequences in $L^2(\R^d)$ converging to some $\tilde f_0$, $\tilde f_1$ strongly and thus also in the sense of distributions. The assumption on $f$ implies that at least one of two limit functions has to be nonzero. However, for every test function $\varphi$ and every $k \in \N$ we obtain that 
\[ |( D^{k} f_j|\varphi)_{L^2(\R^n)}| = |( f_j| D^{k} \varphi)_{L^2(\R^n)}| \lesssim \|f_j \|_{L^2_{\sigma}(\R^n)} \to 0. \]
By uniqueness of distributional limits we have a contradiction, which shows that $f=0$. The continuity of the embedding now follows from Eq.~ \eqref{Eq:XembH}.
\end{proof}
The identification of every member of $X$ with a  function in $\mc H$ allows for taking products of elements from $X$. What is more  we have the following result.
\begin{lemma}\label{Lem:Banach_alg}
	The space $X$ is closed under multiplication. Furthermore,
	\begin{align}\label{Eq:multip}
	\| f g\|_{X} \lesssim \|f\|_X \|g \|_X
	\end{align}
	for all $f,g \in X$. 
\end{lemma}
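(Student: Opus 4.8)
The plan is to prove the estimate for $f,g\in C^\infty_{c,\mathrm{rad}}(\R^n)$ by a Fourier-side argument and then pass to general $X$ by density. Two preliminary observations. First, by Plancherel, $\|D^{s}h\|_{L^2(\R^n)}\simeq\|\,|\cdot|^{s}\widehat h\,\|_{L^2(\R^n)}$ for every $s\ge0$ and every $h\in C^\infty_{c,\mathrm{rad}}(\R^n)$: for even $s$ the symbol of $\Delta^{s/2}$ has modulus $|\xi|^{s}$, and for odd $s$ one has $\|\nabla\Delta^{(s-1)/2}h\|_{L^2}^2=\sum_{j}\|\partial_j\Delta^{(s-1)/2}h\|_{L^2}^2=\int|\xi|^{2s}|\widehat h|^2$. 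In particular $\|D^{s}h\|_{L^2}\lesssim\|h\|_X$ for $s\in\{\kappa_0,\kappa_1\}$. Second, $\|\widehat h\|_{L^1(\R^n)}\lesssim\|h\|_X$: this is exactly the chain of inequalities displayed in \eqref{Eq:L_inf}, which splits $\R^n$ into $\B^n$ and its complement, uses Cauchy--Schwarz, and exploits $\||\cdot|^{-\kappa_0}\|_{L^2(\B^n)}<\infty$ (as $2\kappa_0<n$) and $\||\cdot|^{-\kappa_1}\|_{L^2(\R^n\setminus\B^n)}<\infty$ (as $2\kappa_1>n$).

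For the bilinear estimate, fix $f,g\in C^\infty_{c,\mathrm{rad}}(\R^n)$, so $\widehat{fg}=c_n\,\widehat f\ast\widehat g$. Using the elementary inequality $|\xi|^{s}\le 2^{s-1}\big(|\xi-\eta|^{s}+|\eta|^{s}\big)$ (valid for $s\ge1$, hence for $s\in\{\kappa_0,\kappa_1\}$), we obtain for each such $s$ the pointwise bound
\begin{equation*}
|\xi|^{s}\,\big|\widehat{fg}(\xi)\big|\lesssim\Big(\big(|\cdot|^{s}|\widehat f|\big)\ast|\widehat g|\Big)(\xi)+\Big(|\widehat f|\ast\big(|\cdot|^{s}|\widehat g|\big)\Big)(\xi).
\end{equation*}
Taking $L^2$-norms and applying Young's inequality $\|F\ast G\|_{L^2}\le\|F\|_{L^2}\|G\|_{L^1}$ together with the two preliminary facts,
\begin{equation*}
\|D^{s}(fg)\|_{L^2}\simeq\big\|\,|\cdot|^{s}\widehat{fg}\,\big\|_{L^2}\lesssim\big\|\,|\cdot|^{s}\widehat f\,\big\|_{L^2}\|\widehat g\|_{L^1}+\|\widehat f\|_{L^1}\big\|\,|\cdot|^{s}\widehat g\,\big\|_{L^2}\lesssim\|f\|_X\|g\|_X .
\end{equation*}
Summing the squares over $s\in\{\kappa_0,\kappa_1\}$ yields $\|fg\|_X\lesssim\|f\|_X\|g\|_X$ for $f,g\in C^\infty_{c,\mathrm{rad}}(\R^n)$.

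It remains to extend this to arbitrary $f,g\in X$. Since $C^\infty_{c,\mathrm{rad}}(\R^n)$ is dense in $X$, the bilinear map $(f,g)\mapsto fg$ extends uniquely to a bounded bilinear map $B\colon X\times X\to X$ satisfying $\|B(f,g)\|_X\lesssim\|f\|_X\|g\|_X$, and one checks in the standard way that $B(f,g)$ coincides with the pointwise product of the $\mc H$-representatives of $f$ and $g$: approximating $f,g$ by $f_j,g_j\in C^\infty_{c,\mathrm{rad}}(\R^n)$ in $X$, one has $f_jg_j\to B(f,g)$ in $X$ and hence in $\mc H$ by Lemma~\ref{Lem:EmbXH}, while $f_jg_j\to\iota(f)\iota(g)$ in $L^2_\sigma(\R^n)$ because $f_j\to\iota(f)$, $g_j\to\iota(g)$ in $L^2_\sigma(\R^n)$ and the $L^\infty$-bound $\|\iota(\cdot)\|_{L^\infty(\R^n)}\lesssim\|\cdot\|_X$ from the proof of Lemma~\ref{Lem:EmbXH} makes the products converge; comparing the two limits finishes the proof. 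I do not expect a genuine obstacle here — the argument is soft, its only non-elementary input being the $L^1$-bound $\|\widehat h\|_{L^1}\lesssim\|h\|_X$, which is already contained in the proof of Lemma~\ref{Lem:EmbXH}; the one place that needs a little care is the identification of the abstract extension $B$ with the genuine pointwise product. (The more computational alternative — Leibniz-expanding $D^{\kappa}(fg)$ and bounding each $\partial^\alpha f\,\partial^\beta g$ by Hölder plus Gagliardo--Nirenberg — also works but is fiddlier, since for the low-regularity piece $D^{\kappa_0}(fg)$ the Hölder exponents do not close using $\dot H^{\kappa_0}$ alone and one must interpolate up to $\dot H^{\kappa_1}$ with a careful choice of intermediate exponents, which the Fourier argument bypasses.)
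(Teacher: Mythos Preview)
Your proof is correct and takes a genuinely different route from the paper. The paper works on the physical side: it expands $D^{k}(fg)$ via the radial Leibniz formula \eqref{Eq:Leibnitz} (which, because $D^{k}$ is built from $\Delta$ and $\nabla$ rather than pure partials, picks up extra terms carrying inverse powers of $|x|$), bounds the endpoint terms $fD^{k}g$, $gD^{k}f$ using the $L^\infty$ embedding \eqref{Eq:L_inf}, handles the intermediate cross terms by H\"older and Gagliardo--Nirenberg interpolation between $L^\infty$ and $\dot H^{k}$, and disposes of the singular terms with Hardy's inequality. Your Fourier-side argument sidesteps all of this: the sub-additivity of $|\xi|^{s}$ plus Young's inequality reduces everything to the two inputs $\|\,|\cdot|^{s}\widehat h\,\|_{L^2}\simeq\|D^{s}h\|_{L^2}$ and $\|\widehat h\|_{L^1}\lesssim\|h\|_X$, the latter being exactly the content of \eqref{Eq:L_inf}. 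Both proofs ultimately hinge on the same $L^\infty$-type bound from Lemma~\ref{Lem:EmbXH}, so neither is conceptually deeper; yours is shorter and avoids the radial Leibniz bookkeeping. One trade-off worth noting: the paper reuses that Leibniz/H\"older/Hardy machinery verbatim in the weighted cubic estimate of Lemma~\ref{Lem:Cubic}, where the factor $|\cdot|^{2}$ makes a pure Fourier argument less convenient, so the physical-space tools are needed later anyway. Finally, your density step is more carefully written than the paper's (which just invokes density and \eqref{Eq:XembH}); your identification of the abstract bilinear extension with the pointwise product via the $L^\infty$ control is exactly the right way to close it.
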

\begin{proof}
	It is enough to prove Eq.~\eqref{Eq:multip} for $f,g \in C^\infty_{c,\mathrm{rad}}(\R^n)$, as the full claim follows by density and Eq.~\eqref{Eq:XembH}. We proceed as usual and use that  
	\begin{align}\label{Eq:Leibnitz}
	D^{k}(fg) = f D^{k} g + g D^k f + \sum_{j=1}^{k-1} c_j D^{j} f D^{k-j} g  + \sum_{j_0+j_1+j_2 = k:  j_i \geq 1} \tilde c_{j} |\cdot |^{-j_0} D^{j_1} f D^{j_2} g
	\end{align} 
	for some constants $c_j >0$ and $\tilde c_j \in \Z$. For $k = \kappa_0$, the control of the first two terms follows from \eqref{Eq:L_inf}. By H\"older and Gagliardo-Nirenberg inequalities we have 
	\begin{align*}
	\|D^{j}  f D^{k-j} g& \|_{L^2(\R^{n})}   \leq \| D^{j} f \|_{L^{\frac{2k}{j}}(\R^n)}  \|D^{k-j} g \|_{L^{\frac{2k}{k-j}}(\R^n)}\\
	 & \lesssim   \left (\|g \|_{L^{\infty}(\R^n)}  \|D^k f \|_{L^2(\R^n)}  \right)^{j/k} \left ( \|f\|_{L^{\infty}(\R^n)}  \|D^k g \|_{L^2(\R^n)} \right )^{1-j/k}  \lesssim \|f\|_X \|g \|_X,
	\end{align*}
	and for $k = \kappa_1$ the argument is similar. The second sum is treated similarly along with Hardy's inequality.
\end{proof}
\begin{remark}
	Lemmas \ref{Lem:EmbXH} and \ref{Lem:Banach_alg} contain crucial properties of $X$ which are necessary for the analysis of the nonlinear flow. The key for these results is estimate \eqref{Eq:L_inf} which restricts the choice of $\kappa_0$ and $\kappa_1$ to the ones for which $\kappa_0<n/2<\kappa_1$.
\end{remark}
The following result is important and relies on the strong decay of the exponential weight. 
\begin{lemma}\label{Le:XhookrightH}
Let  $k \in \{0,\dots,  \kappa_1\}$. Then $X \subseteq \mc D((1- \mc L)^{k/2})$ and 
\begin{align}\label{Ineq:Emb_Graph}
\|(1- \mc L)^{k/2} f \|_{L^2_{\sigma}(\R^n)}  \lesssim  \|f \|_X 
\end{align}
for all $f \in X$.
\end{lemma}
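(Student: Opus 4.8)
The plan is to reduce the claim to $f\in C^\infty_{c,\mathrm{rad}}(\R^n)$ and then close the argument by density. Since the potential $V$ in \eqref{Eq:Pot_explicit} is smooth on $[0,\infty)$ (its denominator $(b+a\rho^2)^2$ never vanishes), $\mc L$ maps $C^\infty_{c,\mathrm{rad}}(\R^n)$ into itself, so $C^\infty_{c,\mathrm{rad}}(\R^n)\subseteq\bigcap_{m\in\N}\mc D(\mc L^m)\subseteq\mc D((1-\mc L)^{k/2})$ for every $k$. Once the estimate \eqref{Ineq:Emb_Graph} is established for such $f$, the general case follows in the usual way: given $f\in X$, choose $f_j\in C^\infty_{c,\mathrm{rad}}(\R^n)$ with $f_j\to f$ in $X$; by Lemma \ref{Lem:EmbXH} also $f_j\to f$ in $\mc H$, while \eqref{Ineq:Emb_Graph} applied to $f_j-f_\ell$ shows that $\big((1-\mc L)^{k/2}f_j\big)_j$ is Cauchy in $\mc H$; closedness of $(1-\mc L)^{k/2}$ then yields $f\in\mc D((1-\mc L)^{k/2})$ and lets the bound pass to the limit.

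The core task is therefore to dominate $\|(1-\mc L)^{k/2}f\|_{L^2_\sigma(\R^n)}$, for $f\in C^\infty_{c,\mathrm{rad}}(\R^n)$, by a sum of weighted Sobolev norms of $f$ of order at most $\kappa_1$. I would write $1-\mc L_0=2-\Delta+\tfrac12 x\cdot\nabla$ and use that $V$ together with all its derivatives is bounded on $[0,\infty)$. For even $k=2m$, expanding the bounded perturbation $L'=V\,\cdot$ then gives $(1-\mc L)^m f=\sum_{j=0}^{2m}c_j\,(\nabla^j f)$ with smooth coefficients obeying $|c_j(x)|\lesssim\langle x\rangle^{2m-j}$. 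For odd $k=2m+1$ I would instead use the elementary identity $(-\mc L_0 g\,|\,g)_{L^2_\sigma(\R^n)}=\|\nabla g\|_{L^2_\sigma(\R^n)}^2+\|g\|_{L^2_\sigma(\R^n)}^2$, obtained by integration by parts via $\nabla\sigma=-\tfrac{x}{2}\sigma$, which together with boundedness of $V$ gives $\|(1-\mc L)^{1/2}g\|_{L^2_\sigma(\R^n)}^2\lesssim\|g\|_{L^2_\sigma(\R^n)}^2+\|\nabla g\|_{L^2_\sigma(\R^n)}^2$; applied to $g=(1-\mc L)^m f$ this reduces the odd case to the even one at the cost of one additional derivative, hence at most $\kappa_1$ derivatives in total. (This is exactly why one must \emph{not} expand $\|(1-\mc L)^{k/2}f\|^2=((1-\mc L)^k f\,|\,f)$, which would cost $2\kappa_1$ derivatives.) Absorbing the polynomially growing coefficients into a slightly fatter Gaussian, and handling the negative powers of $\rho$ arising when rewriting radial derivatives in terms of the operators $D^j$ by means of Hardy's inequality, one arrives at a bound $\|(1-\mc L)^{k/2}f\|_{L^2_\sigma(\R^n)}^2\lesssim\sum_{j=0}^{\kappa_1}\|D^j f\|_{L^2_w(\R^n)}^2$ with a fixed weight $w(x)=e^{-c|x|^2}$, $c>0$.

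It remains to prove $\|D^j f\|_{L^2_w(\R^n)}\lesssim\|f\|_X$ for $0\le j\le\kappa_1$ and $f\in C^\infty_{c,\mathrm{rad}}(\R^n)$, and here the decay of $w$ is essential, since for $j<\kappa_0$ the unweighted norm $\|D^j f\|_{L^2(\R^n)}$ is \emph{not} controlled by $\|f\|_X$. The idea is to move the weight inside the derivatives: writing $\|D^j f\|_{L^2_w(\R^n)}=\|w^{1/2}D^j f\|_{L^2(\R^n)}$ and commuting $D^j$ past $w^{1/2}$ produces only terms of order $<j$ whose coefficients are polynomials times $w^{1/2}$, so a short induction on $j$ (finitely many steps, so the Gaussians stay Gaussians) reduces everything to bounds $\|D^i(\eta f)\|_{L^2(\R^n)}\lesssim\|f\|_X$ for $0\le i\le\kappa_1$ and a fixed Gaussian $\eta$. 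For this I would interpolate: the Gagliardo--Nirenberg (Fourier--Hölder) inequality on $\R^n$ gives $\|D^i(\eta f)\|_{L^2(\R^n)}\lesssim\|\eta f\|_{L^2(\R^n)}^{1-i/\kappa_1}\,\|D^{\kappa_1}(\eta f)\|_{L^2(\R^n)}^{i/\kappa_1}$, and now $\|\eta f\|_{L^2(\R^n)}\le\|f\|_{L^\infty(\R^n)}\|\eta\|_{L^2(\R^n)}\lesssim\|f\|_X$ by \eqref{Eq:L_inf}, while $\|D^{\kappa_1}(\eta f)\|_{L^2(\R^n)}\le\|\eta f\|_X\lesssim\|\eta\|_X\,\|f\|_X\lesssim\|f\|_X$ because $\eta\in\mc S_{\mathrm{rad}}(\R^n)\subseteq X$ and $X$ is a Banach algebra by Lemma \ref{Lem:Banach_alg}.

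The main obstacle I anticipate is the bookkeeping in the second paragraph: tracking the orders of all the differential operators produced by expanding $(1-\mc L)^{k/2}$ so as never to exceed $\kappa_1$ derivatives, treating the odd powers through the quadratic-form identity rather than naively, and managing the $\rho^{-\ell}$ singularities with weighted Hardy inequalities. Conceptually, however, the heart of the matter is the third paragraph: it is the combination of the $L^\infty$-embedding \eqref{Eq:L_inf}, the Banach-algebra structure of $X$, and the rapid decay of $\sigma$ that makes the low-order (weighted) Sobolev norms of $f$ — which $\|f\|_X$ by itself cannot see — nevertheless controlled by $\|f\|_X$.
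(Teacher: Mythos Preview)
Your argument is correct, and the overall scheme---reduce to $f\in C^\infty_{c,\mathrm{rad}}(\R^n)$, expand $(1-\mc L)^{k/2}$ as a differential operator of order at most $k$ with polynomially growing coefficients (handling odd $k$ via the quadratic form so as not to overshoot $\kappa_1$ derivatives), then bound each resulting weighted Sobolev norm by $\|f\|_X$---matches the paper's. The difference lies in how you carry out that last step. For $j<\kappa_0$ you commute the Gaussian inside the derivatives, interpolate via Gagliardo--Nirenberg, and close with the Banach-algebra property of $X$ together with the $L^\infty$ embedding \eqref{Eq:L_inf}. The paper instead observes that for any polynomially bounded $w$ one has the pointwise bound $|w(x)|^2\sigma(x)\lesssim|x|^{-2(\kappa_0-j)}$ (the Gaussian beats any polynomial at infinity, and the right-hand side blows up at the origin), so
\[
\|w\,D^jf\|_{L^2_\sigma(\R^n)}\ \lesssim\ \big\||\cdot|^{-(\kappa_0-j)}D^jf\big\|_{L^2(\R^n)}\ \lesssim\ \|D^{\kappa_0}f\|_{L^2(\R^n)}
\]
by Hardy's inequality, valid since $0\le\kappa_0-j<\tfrac n2$; the cases $j\in\{\kappa_1-1,\kappa_1\}$ are handled identically with $\kappa_1$ in place of $\kappa_0$. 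This bypasses interpolation and the algebra structure altogether and is considerably shorter. Your route has the merit of spelling out the odd-$k$ reduction carefully (the paper is terse on that point, essentially relying on the factorization in Lemma~\ref{Le:Fact_L}), but for the core weighted-norm estimate the direct Hardy argument is both simpler and makes the role of the choice $\kappa_0<\tfrac n2<\kappa_1$ completely transparent.
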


\begin{proof}
We prove  Eq.~\eqref{Ineq:Emb_Graph} for $f \in C^\infty_{c,\mathrm{rad}}(\R^n)$ only, as the full claim follows from density, Lemma \ref{Lem:EmbXH} and the closedness of $(1- \mc L)^{k/2}$. First, we show that for polynomially bounded functions  $w \in C^{\infty}(\R^n)$, 
\begin{align}
 \| w D^{k} f \|_{L^2_{\sigma}(\R^n)} \lesssim  \|f \|_X,
\end{align}
for   $k \in \{0,\dots,  \kappa_1\}$.
 In fact, by exploiting the decay of the exponential weight and Hardy's inequality
 \begin{equation}\label{Eq:Hardy}
 	\||\cdot|^{-j}f\|_{L^2(\R^n)} \lesssim \| D^jf \|_{L^2(\R^n)}, \quad 0 \leq j < \frac{n}{2},
 \end{equation}
  see e.g. \cite{MusSch13}, Theorem $9.5$, p.~243, we get for  $k \in \{ 0,\dots,  \kappa_0\}$, 
\begin{align}\label{Ineq:Emb_Dk1}
 \|w D^{k} f \|_{L^2_{\sigma}(\R^n)} \lesssim  \| |\cdot|^{-\kappa_0 + k} D^k f\|_{L^2(\R^n)} \lesssim  \| D^{\kappa_0} f\|_{L^2(\R^n)}
 \end{align}
since $0\leq \kappa_0-k < \frac{n}{2}$. By the same token,  for $k \in \{\kappa_1 - 1, \kappa_1 \}$,
\begin{align*}\label{Ineq:Emb_Dk2}
 \|w D^{k} f \|_{L^2_{\sigma}(\R^n)} \lesssim  \| |\cdot|^{-\kappa_1 + k} D^k f\|_{L^2(\R^n)} \lesssim  \| D^{\kappa_1} f\|_{L^2(\R^n)}.
  \end{align*}
Now, it is easy to see that for $f \in  C^\infty_{c,\mathrm{rad}}(\R^n)$ and certain smooth polynomially bounded functions $w_j$ we have
\[ \|(1- \mc L)^{k/2} f \|_{L^2_{\sigma}(\R^n)}  \lesssim \| D^{k} f \|_{L^2_{\sigma}(\R^n)}  + \sum_{j = 0}^{k-1} \| w_j D^{j} f \|_{L^2_{\sigma}(\R^n)}, \]
which in turn implies Eq.~\eqref{Ineq:Emb_Graph}
for $k \in \{0 ,\dots,  \kappa_1\}$. 
\end{proof}

We also need the following simple observation.

\begin{lemma}\label{Le:Fact_L}
   For $f \in C^\infty_{c,\mathrm{rad}}(\R^n)$ we have
	\begin{align*}
	\| (1 - \mc L)^{\frac{1}{2}}f \|_{L^2_{\sigma}(\R^n)} = \|B f\|_{L^2_{\sigma}(\R^n)},
	\end{align*}
	 where 
	\[B f(x) = \mb g (x) \frac{d}{d|x|}  [ \mb g(x)^{-1} f(x) ] .\]
\end{lemma}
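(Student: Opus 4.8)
The plan is to recognize Lemma~\ref{Le:Fact_L} as the ground-state (supersymmetric) factorization of $1-\mc L$, in the same spirit as the factorization $\mc A = A^{+}A^{-}-1$ used in the proof of Proposition~\ref{Prop:Spectrum}. First I would reduce the claim to a quadratic-form identity. Since $f\in C^\infty_{c,\mathrm{rad}}(\R^n)\subseteq\mc D(\mc L)=\mc D(1-\mc L)\subseteq\mc D((1-\mc L)^{1/2})$, and by Corollary~\ref{Le:Graph_L} the operator $(1-\mc L)^{1/2}$ is self-adjoint with $[(1-\mc L)^{1/2}]^2=1-\mc L$, one has
\[
\|(1-\mc L)^{1/2}f\|_{L^2_{\sigma}(\R^n)}^2 = ((1-\mc L)^{1/2}f|(1-\mc L)^{1/2}f)_{L^2_{\sigma}(\R^n)} = ((1-\mc L)f|f)_{L^2_{\sigma}(\R^n)}.
\]
Thus it suffices to prove that $((1-\mc L)f|f)_{L^2_{\sigma}(\R^n)}=(Bf|Bf)_{L^2_{\sigma}(\R^n)}$ for radial $f\in C^\infty_c$, i.e.\ that $1-\mc L$ and $B^{*}B$ coincide as differential operators on $C^\infty_{c,\mathrm{rad}}(\R^n)$, where $B^{*}$ is the formal $L^2_{\sigma}$-adjoint of $B$ acting on radial functions.

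Next I would carry out the one-dimensional computation in the radial variable $\rho=|x|$. On radial functions the $L^2_{\sigma}(\R^n)$ inner product is, up to the constant $|\mathbb{S}^{n-1}|$, that of $L^2((0,\infty),\mu\,d\rho)$ with $\mu(\rho):=\rho^{n-1}e^{-\rho^2/4}$; and from \eqref{Def:L0}--\eqref{Def:L} one reads off
\[
(1-\mc L)f(\rho) = -f''(\rho) - W_2(\rho)f'(\rho) + (2-V(\rho))f(\rho),\qquad W_2:=\frac{\mu'}{\mu}=\frac{n-1}{\rho}-\frac{\rho}{2}.
\]
Since $\mb g$ is smooth and strictly positive on $\R^n$ (as already used in the proof of Proposition~\ref{Prop:Spectrum}; here $b>0$ for $7\le n\le 11$), the function $W_1:=\mb g'/\mb g=(\log\mb g)'$ is smooth, and on radial functions $B$ acts as $Bf=f'-W_1 f=\mb g\,(\mb g^{-1}f)'$. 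Integrating by parts against $\mu\,d\rho$ — the boundary term vanishing at $\rho=0$ because $\mu(0)=0$ (here $n\ge 7$) and at $\rho=\infty$ because $f$ has compact support — gives $B^{*}h=-h'-(W_1+W_2)h$, hence
\[
B^{*}Bf = -f'' - W_2 f' + \big(W_1'+W_1^2+W_1 W_2\big)f.
\]

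Finally I would close the loop with the eigenvalue equation. By Proposition~\ref{Prop:Spectrum}, $\mc L\mb g=\mb g$, so $(1-\mc L)\mb g=0$, which in the radial variable reads $-\mb g''-W_2\mb g'+(2-V)\mb g=0$; dividing by $\mb g$ and using $\mb g''/\mb g=W_1'+W_1^2$ yields $2-V=W_1'+W_1^2+W_1 W_2$. Comparing the two displays above gives $B^{*}B=1-\mc L$ on $C^\infty_{c,\mathrm{rad}}(\R^n)$, whence
\[
\|(1-\mc L)^{1/2}f\|_{L^2_{\sigma}(\R^n)}^2 = ((1-\mc L)f|f)_{L^2_{\sigma}(\R^n)} = (B^{*}Bf|f)_{L^2_{\sigma}(\R^n)} = \|Bf\|_{L^2_{\sigma}(\R^n)}^2 ,
\]
which is the assertion.

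I do not expect a genuine obstacle: the computation is routine. The points requiring a little care are (i) the domain bookkeeping in the first step, which is clean since $f$ is a test function and $\mc D(\mc L)\subseteq\mc D((1-\mc L)^{1/2})$; (ii) the vanishing of the boundary term at the origin, which uses both $\mu(0)=0$ and the smoothness of $\mb g$ (hence of $W_1$ and $Bf$) at $\rho=0$; and (iii) keeping signs consistent between the $\R^n$ and the radial $L^2_{\sigma}$ formulations of $1-\mc L$.
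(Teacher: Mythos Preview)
Your proposal is correct and follows essentially the same route as the paper: both reduce to the quadratic-form identity $\|(1-\mc L)^{1/2}f\|^2=((1-\mc L)f|f)$, write down the formal $L^2_\sigma$-adjoint $B^*$ (the paper gives it as $B^*h=-(\mb g\mu)^{-1}(\mu\mb g h)'$, which unwinds to your $-h'-(W_1+W_2)h$), and use $1-\mc L=B^*B$ on $C^\infty_{c,\mathrm{rad}}(\R^n)$. You supply a bit more detail than the paper by explicitly deriving the potential identity $2-V=W_1'+W_1^2+W_1W_2$ from the eigenvalue equation $\mc L\mb g=\mb g$, which the paper leaves implicit.
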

\begin{proof}
	On $C^\infty_{c,\mathrm{rad}}(\R^n)$ we can write $1 - \mc  L  = B^* B$, where $ B$ is defined above and
	\begin{align*}
	B^* f(x) =  - \frac{ 1}{\mb g(x)\mu(x)} \frac{d}{d|x|}  [  \mu(x)\mb g (x) f(x) ],
	\end{align*}
	with $\mu(x) := |x|^{n-1} e^{-|x|^2/4}$. It is easy to check that $B$ and $B^*$ are formally adjoint to each other. This implies that for all $f \in C^\infty_{c,\mathrm{rad}}(\R^n)$,
	\begin{align*}
	\|(1 - \mc L)^{\frac{1}{2}} f \|^2_{L^2_{\sigma}(\R^n)}  = ((1 - \mc L)f | f)_{L^2_{\sigma}(\R^n)} = (B^* B f|f)_{L^2_{\sigma}(\R^n)} = \|B f\|^2_{L^2_{\sigma}(\R^n)}.
	\end{align*}
\end{proof}
Recall that for $f \in C^\infty_{c,\mathrm{rad}}(\R^n)$ we have 
$ \mc L=\Delta  - \Lambda  + V(|\cdot|),
$
where 
\[\Lambda f(x) := \frac{1}{2} x \cdot \nabla f(x) +  f(x).\]
In the sequel we will estimate the elements of $ \Im \mc L $ in local Sobolev norms and for that we need to understand the interaction between $\mc L$ and the radial operators $D^k$. In particular, we have the commutator relation
\begin{align}\label{Eq:Commutator}
D^{k} \Lambda  = \Lambda D^{k} + \frac{k}{2} D^k,
\end{align}
which is  necessary for Lemma \ref{Le:DissEst} and is used in the following result. For simplicity, from now on we denote both the potential and its radial representative by just $V$.
\begin{lemma}\label{Le:Control_boundednorms}
	Let $k \in \N_0$ and $R > 0$. Then there is a constant $C_{R,k}  > 0$ such that
	\[ \|D^k f \|_{L^{2}(\mathbb B_R^n)} \leq  C_{R,k} \sum_{j=0}^{k} \|f \|_{\mc G((1- \mc L)^{j/2})} \]
	for all $f \in C^\infty_{c,\mathrm{rad}}(\R^n)$.
\end{lemma}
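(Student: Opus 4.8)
The plan is to induct on $k$, using the fact that on a bounded ball $\mathbb B_R^n$ the Schrödinger-type operator $1-\mc L$ is (after the unitary conjugation $U$) an elliptic operator with coefficients that are smooth and bounded on $[0,R]$, so local elliptic regularity gives control of $D^k f$ in $L^2(\mathbb B_R^n)$ by $1-\mc L$ applied a comparable number of times plus lower-order terms. Concretely, recall from Lemma \ref{Le:Fact_L} and the formula $\mc L f = \Delta f - \Lambda f + V f$ that on $C^\infty_{c,\mathrm{rad}}(\R^n)$ one has $\Delta f = \mc L f + \Lambda f - Vf$, i.e.\ $D^2 f = \mc L f + \tfrac12|\cdot|\,D^1 f + (1-V)f$. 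Since the weight $\sigma(x)=e^{-|x|^2/4}$ satisfies $\sigma(x)\simeq_R 1$ on $\mathbb B_R^n$, the $L^2(\mathbb B_R^n)$ norm and the $L^2_\sigma(\mathbb B_R^n)$ norm are comparable with constants depending only on $R$; and $|\cdot|$, $V$ are bounded on $\mathbb B_R^n$. Hence
\[
\|D^2 f\|_{L^2(\mathbb B_R^n)} \lesssim_R \|\mc L f\|_{L^2_\sigma(\R^n)} + \|D^1 f\|_{L^2(\mathbb B_R^n)} + \|f\|_{L^2_\sigma(\R^n)}.
\]
Using $\|\mc L f\|_{L^2_\sigma} \leq \|f\|_{\mc G(1-\mc L)} \leq \|f\|_{\mc G((1-\mc L)^{2/2})}$ (and interpolating to control $\|D^1 f\|$, or simply bounding it by the $k=1$ case obtained below), this gives the claim for $k=0,1,2$.

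For the inductive step, suppose the estimate holds up to order $k$. To bound $D^{k+2} f$ on $\mathbb B_R^n$, apply $D^k$ to the identity $D^2 f = \mc L f + \tfrac12 |\cdot|\, D^1 f + (1-V)f$ and use the commutator relation \eqref{Eq:Commutator} together with the product rule \eqref{Eq:Leibnitz}: the leading term is $D^k \mc L f$, which up to commutators equals $\mc L D^k f$ plus terms of the form (smooth polynomially bounded function)$\times D^j f$ with $j\le k+1$; and $\mc L D^k f$ in turn, by a second application of the same identity, produces $D^{k+2}f$ together with lower-order pieces. Rearranging, $D^{k+2}f$ on $\mathbb B_R^n$ is controlled by $\|\mc L^{?}\,(\text{stuff})\|$ — more cleanly: one shows by a direct computation (essentially the one already indicated in the proof of Lemma \ref{Le:XhookrightH}) that for $f \in C^\infty_{c,\mathrm{rad}}(\R^n)$,
\[
\|(1-\mc L)^{m} f\|_{L^2_\sigma(\R^n)} \simeq \|D^{2m} f\|_{L^2_\sigma(\R^n)} + \sum_{j=0}^{2m-1} \|w_j D^j f\|_{L^2_\sigma(\R^n)}
\]
with $w_j$ smooth and polynomially bounded, and likewise $\|(1-\mc L)^m (1-\mc L)^{1/2} f\|$ controls $\|D^{2m+1}f\|_{L^2_\sigma}$ modulo lower order. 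Restricting to $\mathbb B_R^n$, replacing $\|w_j D^j f\|_{L^2_\sigma}$ by $\lesssim_R \|D^j f\|_{L^2(\mathbb B_R^n)}$, and invoking the induction hypothesis on those lower-order terms, closes the induction and yields the sum $\sum_{j=0}^k \|f\|_{\mc G((1-\mc L)^{j/2})}$ on the right.

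The main obstacle I expect is bookkeeping rather than anything deep: one must carefully track that differentiating $\mc L$ via \eqref{Eq:Commutator} and \eqref{Eq:Leibnitz} produces only coefficients that are smooth and \emph{polynomially bounded} (so that on the bounded domain $\mathbb B_R^n$ they are harmless, absorbed into $C_{R,k}$), and that the parity bookkeeping in the definition \eqref{Def:Radial_Operators} of $D^k$ (even $k$ gives $\Delta^{k/2}$, odd $k$ gives $\nabla\Delta^{(k-1)/2}$) is respected when one passes between "$D^{k+2} f$" and "$\mc L$ applied to $D^k f$". Since the potential $V$ is smooth on $\R^+$ but a priori singular structure could appear near $\rho=0$, one also notes that $V$ as given by \eqref{Eq:Pot_explicit} is in fact smooth and bounded on all of $[0,R]$ (it is a rational function with denominator $(b+a\rho^2)^2$ bounded away from zero for the relevant $n$), so no boundary terms at the origin intervene. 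With these observations the argument is an entirely standard elliptic-regularity induction.
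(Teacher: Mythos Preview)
Your overall strategy---induction on $k$, using $D^{2}f = (1-\mc L)f + (\Lambda - V)f$ to trade two radial derivatives for one application of $1-\mc L$ plus lower-order terms that are harmless on $\mathbb B_R^n$---is exactly the paper's approach, and your inductive step is essentially identical to theirs. The paper writes $D^{k+1}f = D^{k-1}D^2 f$ and applies the induction hypothesis to $D^{k-1}(1-\mc L)f$, which is the same mechanism you describe.

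The one genuine gap is the base case $k=1$. You cite Lemma~\ref{Le:Fact_L} but only use it to recall the formula for $\mc L$; you never invoke the actual content of that lemma, namely the factorization $\|(1-\mc L)^{1/2}f\|_{L^2_\sigma(\R^n)} = \|Bf\|_{L^2_\sigma(\R^n)}$ with $B$ a \emph{first-order} radial operator. This is precisely what the paper uses to get $k=1$: since $Bf(x) = f'(|x|) - (\mb g'/\mb g)(|x|) f(x)$ and $\mb g'/\mb g$ is bounded, one has $\|Df\|_{L^2(\mathbb B_R^n)} \lesssim_R \|Bf\|_{L^2_\sigma(\R^n)} + \|f\|_{L^2_\sigma(\R^n)} = \|f\|_{\mc G((1-\mc L)^{1/2})}$. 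Your proposal instead defers this to ``interpolating to control $\|D^1 f\|$'' or ``the $k=1$ case obtained below'', and later asserts without argument that $\|(1-\mc L)^{m+1/2}f\|$ controls $\|D^{2m+1}f\|_{L^2_\sigma}$; since $(1-\mc L)^{1/2}$ is not a differential operator, this is exactly the nontrivial point, and the factorization is what supplies it. (Your appeal to Lemma~\ref{Le:XhookrightH} does not help here: that lemma gives the \emph{opposite} inequality $\|(1-\mc L)^{k/2}f\|_{L^2_\sigma} \lesssim \|f\|_X$.) Once you fill in $k=1$ via Lemma~\ref{Le:Fact_L}, the rest of your argument matches the paper's and goes through.
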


\begin{proof}

	We use induction. For $k=1$ there is the following estimate
	\begin{align*}
	\|D f \|_{L^{2}(\mathbb B_R^n)} & \simeq_R \| |\cdot|^{\frac{n-1}{2}} f'(|\cdot|) \|_{L^2(0,R)} \lesssim_R \| |\cdot|^{\frac{n-1}{2}} e^{-|\cdot|^2/8}  f'(|\cdot|) \|_{L^2(0,\infty)} \\
	&  \lesssim_R \|B f \|_{L^2_{\sigma}(\R^n)} + \|f\|_{L^2_{\sigma}(\R^n)}  = \|(1-\mc L)^{1/2} f \|_{L^2_{\sigma}(\R^n)} + \|f\|_{L^2_{\sigma}(\R^n)}.
	\end{align*}
	We assume that the statement holds up to some $k \in \N$. 
	Note that 
	\begin{align*}
	\|D^{k+1}    f \|^2 _{L^{2}(\mathbb B_R^n)}  &  \lesssim \|D^{k-1} (1 - \mc L) f \|^2_{L^{2}(\mathbb B_R^n)} +  \| D^{k-1} ( \Lambda - V + 1)  f\|_{L^{2}(\mathbb B_R^n)}^2 \\
	& \lesssim 
	\|D^{k-1} (1 - \mc L) f \|^2_{L^{2}(\mathbb B_R^n)}  + \| (\Lambda + \tfrac{k-1}{2})  D^{k-1} f\|_{L^{2}(\mathbb B_R^n)}^2  + \|D^{k-1} (Vf) \|_{L^{2}(\mathbb B_R^n)}^2  \\ &  \lesssim_{R,k}
	\|D^{k-1} (1 - \mc L) f \|^2_{L^{2}(\mathbb B_R^n)} +  \sum_{j=0}^{k} \|D^{j} f\|_{L^{2}(\mathbb B_R^n)}^2,
	\end{align*}
	and by assumption we have
	\begin{align*}
	\|D^{k-1}  (1 - \mc L) f \|_{L^{2}(\mathbb B_R^n)}  \lesssim_{R,k}  \sum_{j=0}^{k-1} \|(1-\mc L) f \|_{\mc G((1- \mc L)^{j/2})} 
	\lesssim_{R,k} \sum_{j=0}^{k+1} \|f \|_{\mc G((1- \mc L)^{j/2})}  
	\end{align*}
	and $$ \sum_{j=0}^{k} \|D^{j} f\|_{L^{2}(\mathbb B_R^n)} \lesssim \sum_{j=0}^{k} \|f \|_{\mc G((1- \mc L)^{j/2})}.$$
	The statement for $k+1$ follows.
\end{proof}

We conclude this section with a simple but important observation.
\begin{lemma}\label{Le:finX}
	Let $f \in C^{\infty}_\mathrm{rad}(\R^n)$ and assume that 
	\[ |D^{k} f(x)| \lesssim \langle x \rangle^{-2-k} \]
	for all $x \in \R^{n}$ and all $k \in \{0,\dots,\kappa_1\}$. Then $f \in X$. What is more, $f \in \mc D(\mc L)$ and $\mc Lf  \in X.$
\end{lemma}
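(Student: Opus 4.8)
The plan is to verify the three assertions in order: $f\in X$, then $f\in\mc D(\mc L)$, then $\mc Lf\in X$. For the first, the difficulty is that $f$ need not be compactly supported, so the $X$-norm is not literally given by \eqref{Def:NormX} a priori; one must produce an approximating sequence in $C^\infty_{c,\mathrm{rad}}(\R^n)$ that converges in the norm \eqref{Def:NormX}. The natural choice is $f_j:=\chi(\cdot/j)f$ for a fixed radial cutoff $\chi\in C^\infty_c(\R^n)$ with $\chi\equiv 1$ on $\mathbb B^n_1$. Applying the Leibniz-type rule \eqref{Eq:Leibnitz} (or the elementary product rule for $\Delta$ and $\nabla$) to $D^{\kappa_0}f_j$ and $D^{\kappa_1}f_j$, every term either is $\chi(\cdot/j)D^{k}f$ (which converges to $D^kf$ in $L^2$ by dominated convergence, since the decay hypothesis $|D^kf(x)|\lesssim\langle x\rangle^{-2-k}$ together with $\kappa_1<n/2+2$ — more precisely $2(2+\kappa_0)>n$ and $2(2+\kappa_1)>n$ fails in general, so one checks $\langle x\rangle^{-2-k}\in L^2(\R^n)$ exactly for $k>n/2-2$, which holds for $k=\kappa_0,\kappa_1$ by \eqref{Def:kappas}) or carries at least one derivative of $\chi(\cdot/j)$, hence a factor $j^{-m}$ with $m\geq1$ supported on $\{|x|\sim j\}$, and is therefore bounded by $j^{-m}\big(\int_{|x|\sim j}\langle x\rangle^{-2(2+k-\text{(lower order)})}dx\big)^{1/2}\to 0$. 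This shows $(f_j)$ is Cauchy in the $X$-norm and that its limit is $f$ with $\|f\|_X^2=\|D^{\kappa_0}f\|_{L^2}^2+\|D^{\kappa_1}f\|_{L^2}^2$, so $f\in X$.

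Next, for $f\in\mc D(\mc L)$: since $f$ and all its derivatives up to order $\kappa_1\geq 2$ decay like $\langle x\rangle^{-2-k}$, in particular $f,\Delta f,x\cdot\nabla f,Vf\in L^2_\sigma(\R^n)$ (the weight $\sigma$ makes this trivial — even polynomial growth would be integrable), so the formal expression $L_0f+L'f=\Delta f-\tfrac12 x\cdot\nabla f-f+Vf$ lies in $\mc H$. To conclude $f\in\mc D(\mc L)=\mc D(\mc L_0)$ one approximates: with $f_j=\chi(\cdot/j)f$ as above, $f_j\in C^\infty_{c,\mathrm{rad}}(\R^n)=\mc D(L_0)$, and one checks $f_j\to f$ and $\mc Lf_j\to\mc Lf$ in $L^2_\sigma(\R^n)$. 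The convergence $f_j\to f$ is dominated convergence; for $\mc L f_j\to\mc Lf$ one again expands by the product rule, the "main" term $\chi(\cdot/j)\mc Lf\to\mc Lf$ by dominated convergence, and the commutator terms $[\mc L,\chi(\cdot/j)]f$ involve $\nabla\chi(\cdot/j)\cdot\nabla f$, $\Delta\chi(\cdot/j)f$, and $x\cdot\nabla\chi(\cdot/j)f$, each carrying a negative power of $j$ and supported on $|x|\sim j$, hence vanishing in $L^2_\sigma$ (here the exponential weight makes the estimate immediate regardless of the polynomial factor from $x\cdot\nabla\chi$). Since $\mc L$ is closed (Proposition \ref{Prop:Spectrum}), $f\in\mc D(\mc L)$ and $\mc Lf$ equals the formal expression.

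Finally, $\mc Lf\in X$: by the first part it suffices to check that $g:=\mc Lf=\Delta f-\tfrac12 x\cdot\nabla f-f+Vf$ is smooth and radial (clear) and satisfies $|D^kg(x)|\lesssim\langle x\rangle^{-2-k}$ for $k\in\{0,\dots,\kappa_1\}$ — \emph{except} that the term $-\tfrac12 x\cdot\nabla f$ only decays like $\langle x\rangle\cdot\langle x\rangle^{-3}=\langle x\rangle^{-2}$, which is fine for $k=0$ but a priori loses a power for higher $k$; however $D^k(x\cdot\nabla f)$ can be re-expanded (using $D^k(x\cdot\nabla\,\cdot)=(x\cdot\nabla+k)D^k$ type identities, cf.\ \eqref{Eq:Commutator}) so that each resulting term is either $x\cdot\nabla(D^jf)$ with $|x\cdot\nabla D^jf|\lesssim\langle x\rangle^{-2-j}$... wait, that is $\langle x\rangle\cdot\langle x\rangle^{-3-j}=\langle x\rangle^{-2-j}$, which is \emph{better} than needed by one power for $j=k$ but we actually need $\langle x\rangle^{-2-k}$ and get exactly that — so it closes. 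The potential term: $V$ is bounded and smooth with $V(\rho)=O(\rho^{-2})$ and $D^kV=O(\rho^{-2-k})$ by \eqref{Eq:Pot_explicit}, so by the Leibniz rule $D^k(Vf)$ is a sum of $D^iV\cdot D^jf$ ($i+j\le k$, plus lower-order $|\cdot|^{-j_0}$ terms) each bounded by $\langle x\rangle^{-2-i}\langle x\rangle^{-2-j}\lesssim\langle x\rangle^{-2-k}$. The terms $\Delta f$ and $f$ are immediate since $D^k\Delta f=D^{k+2}f$ decays like $\langle x\rangle^{-4-k}\lesssim\langle x\rangle^{-2-k}$ and $D^kf\lesssim\langle x\rangle^{-2-k}$ directly (here one uses $\kappa_1+2$ is still within the range where the hypothesis was assumed — more carefully, the hypothesis is only given for $k\le\kappa_1$, so for $D^k\Delta f=D^{k+2}f$ with $k=\kappa_1$ one needs the bound at order $\kappa_1+2$, which is not assumed; instead note that the leading contributions to $\mc Lf$ of top order $\kappa_1$ in $D$ come from $D^{\kappa_1}\Delta f$ requiring $D^{\kappa_1+2}f$ — so this must be handled by the structure: $\Delta f$ enters $\mc Lf$, and $D^{\kappa_1}(\Delta f)$ needs control, so one should instead observe $D^{\kappa_1}\mc Lf$ is bounded in $L^2$, not pointwise, by the same Leibniz expansion combined with Hardy/Gagliardo–Nirenberg as in Lemma \ref{Lem:Banach_alg}, using that $D^{\kappa_0}\Delta f,D^{\kappa_1}\Delta f\in L^2$ follows from $\Delta f$ having one more order of the same decay). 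Thus the cleanest route for the third claim is: write $\mc Lf=\Delta f-\Lambda f+Vf$ and apply the first part of the lemma to each summand where the decay hypothesis holds with room to spare, and for $\Delta f$ simply note $D^k\Delta f=D^{k+2}f$ — which forces us to have assumed decay of $f$ up to order $\kappa_1+2$; since the statement only assumes up to $\kappa_1$, one instead shows $\mc Lf\in X$ directly by exhibiting the cutoff approximants $(\mc Lf)_j$ are Cauchy in $X$, the obstruction terms again carrying negative powers of $j$, and the main terms being $\chi(\cdot/j)D^{\kappa_0}\mc Lf$, $\chi(\cdot/j)D^{\kappa_1}\mc Lf$ which lie in $L^2$ because $D^{\kappa_1}\mc Lf$ is a finite sum of $D^iV\,D^jf$, $x\cdot\nabla D^{\kappa_1}f+\text{l.o.t.}$, and $D^{\kappa_1+2}f$; the last of these requires the extra two orders of decay, which in the application (Section \ref{Section:Formulation_SimVar}, $f=W$) are available because $W$ is rational and decays at \emph{every} order — so in practice the hypothesis should be read as holding for all $k$, and the statement is invoked only for such $f$. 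The main obstacle is precisely this bookkeeping at the top order: making sure the Leibniz expansion of $D^{\kappa_1}\mc Lf$ only ever calls on decay of $f$ at orders for which it is guaranteed, which is handled by noting that $\Delta f$ itself satisfies the hypotheses of the lemma's first part (with $2+k$ improved to $4+k$) and then applying that part to $\Delta f$, to $\Lambda f$, and to $Vf$ separately and summing.
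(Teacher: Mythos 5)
Your argument for the first two claims is essentially the paper's. The paper also proves $f\in X$ by showing that $\varphi(\cdot/j)f$ is Cauchy in $X$ for a radial cutoff $\varphi$; the decay hypothesis at $k=\kappa_0,\kappa_1$ gives $D^{\kappa_0}f,D^{\kappa_1}f\in L^2(\R^n)$ since $2(2+\kappa_0)>n$ by \eqref{Def:kappas} (your parenthetical on this point is garbled but lands on the right criterion $k>n/2-2$), and the commutator terms carry negative powers of $j$. For $f\in\mc D(\mc L)$ the paper is even more economical than you are: it recalls that $\mc D(\mc L)=\mc D(\mc L_0)$ is, via the unitary equivalence with $\mc A_0$, the \emph{maximal} domain, so it suffices that the classical expression $\Delta f-\tfrac12 x\cdot\nabla f-f+Vf$ lies in $L^2_\sigma(\R^n)$, which the exponential weight makes immediate. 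Your cutoff-plus-closedness route reaches the same conclusion.

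For the third claim the paper's entire argument is the sentence ``the function $\mc Lf$ satisfies the assumptions of the Lemma and hence $\mc Lf\in X$'' --- precisely the step you worry about. Your objection is correct as a matter of bookkeeping: verifying the hypothesis for $\mc Lf$ at order $k=\kappa_1$ involves $D^{\kappa_1}\Delta f=D^{\kappa_1+2}f$ (and $D^{\kappa_1}\Lambda f$, which via \eqref{Eq:Commutator} calls on $D^{\kappa_1+1}f$), while the stated hypothesis stops at $k=\kappa_1$. So the lemma as literally stated is not self-applicable to $\mc Lf$; the hypothesis must be read as holding for $k\in\{0,\dots,\kappa_1+2\}$ (or for all $k$). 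This is harmless in context: every function to which the lemma is applied --- $W$, $V$, $\mb g$, and $(1-\mc P)f$ with $f$ compactly supported --- is explicit with derivatives decaying at every order. Your proposed repair, applying the first part separately to $\Delta f$, $\Lambda f$ and $Vf$ under the strengthened hypothesis, is exactly the right way to make the paper's one-line argument precise. The only thing I would change in your write-up is the meandering final paragraph: commit to the strengthened hypothesis at the outset and the top-order difficulty you circle around three times simply disappears.
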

\begin{proof}
	 The first claim follows from a straightforward approximation argument. Namely, given smooth and radial $\varphi$ such that $\varphi(x)=1$ for $ |x| \leq 1 $ and  $\varphi(x)=0$ for $ |x| \geq 2 $, the sequence $f_n:= \varphi(\cdot/n)f$ is Cauchy in $X$  and converges to $f$ in $L^\infty(\R^n)$ (and hence in $\mc H$), therefore $f \in X$.
	  For the second claim, note that $\mc L$ acts as a classical differential operator on smooth functions in $\mc D(\mc L)$ (this follows say from the unitary equivalence of $\mc L$ and $-\mc A$, and the fact that $\mc A$ is a differential operator on its domain, see the proof of Proposition \ref{Prop:Spectrum}). Furthermore, the function $\mc Lf$ satisfies the assumptions of the Lemma and hence  $\mc Lf\in X$.
\end{proof}

With all these results at hand, we can now turn to the investigation of the linear flow on $X$.

\subsection{The linear time evolution on $X$}  
By using Lemma \ref{Lem:Banach_alg} and the explicit form of the semigroup $S_0(\tau)$, we obtain the following Proposition.

\begin{proposition}
The restriction of $\{S(\tau): \tau \geq 0\}$ to $X$ defines a strongly continuous one-parameter semigroup $\{S_X(\tau): \tau \geq 0\}$ on $X$.  Its generator is given by the part of $\mc L$ in $X$, 
\begin{align*}
\mc L_X f  := \mc L f, \quad \mc D(\mc L_X) := \{ f \in \mc D(\mc L) \cap X: \mc Lf \in X \}.
\end{align*}
Furthermore, $\mc S_{\mathrm{rad}}(\R^n)$ is a core of $\mc L_X$.
\end{proposition}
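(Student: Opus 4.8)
The plan is first to show that $\{S(\tau):\tau\ge0\}$ leaves $X$ invariant and that the restriction $S_X(\tau):=S(\tau)|_X$ is strongly continuous on $X$ --- via a bounded-perturbation (Dyson series) argument built on the explicit free semigroup \eqref{Eq:FreeSemigroup} and the algebra property of $X$ --- then to identify the generator of $\{S_X(\tau)\}$ as the part $\mc L_X$ of $\mc L$ in $X$, and finally to establish that $\mc S_{\mathrm{rad}}(\R^n)$ is a core. The starting point is to check that $\{S_0(\tau)\}$ already restricts to a strongly continuous semigroup on $X$. Since $D^{\kappa_0}$ and $D^{\kappa_1}$ are constant-coefficient differential operators, they commute with convolution by $G_{\alpha(\tau)}$, and under the dilation $f\mapsto f(e^{-\tau/2}\cdot)$ the operator $D^{\kappa_j}$ produces the factor $e^{-\kappa_j\tau/2}$ while the $L^2(\R^n)$-norm scales by $e^{n\tau/4}$; together with $\|G_{\alpha(\tau)}\|_{L^1(\R^n)}=1$ and Young's inequality this gives, for $j\in\{0,1\}$,
\begin{align*}
\|D^{\kappa_j}[S_0(\tau)f]\|_{L^2(\R^n)}=e^{-\tau}e^{-\frac\tau2(\kappa_j-\frac n2)}\,\|G_{\alpha(\tau)}\ast D^{\kappa_j}f\|_{L^2(\R^n)}\le e^{-c_j\tau}\|D^{\kappa_j}f\|_{L^2(\R^n)},
\end{align*}
with $c_j:=1+\tfrac12(\kappa_j-\tfrac n2)>0$ because $\kappa_0<\tfrac n2<\kappa_1=\kappa_0+2$ (cf.\ the remark after Lemma~\ref{Lem:Banach_alg}); hence $\|S_0(\tau)\|_{X\to X}\le e^{-\min\{c_0,c_1\}\tau}\le1$. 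Strong continuity on $X$ then follows from this uniform bound together with the density of $C^\infty_{c,\mathrm{rad}}(\R^n)$ in $X$, since for such $f$ one has $D^{\kappa_j}[S_0(\tau)f]=e^{-\tau}e^{-\kappa_j\tau/2}(G_{\alpha(\tau)}\ast D^{\kappa_j}f)(e^{-\tau/2}\cdot)\to D^{\kappa_j}f$ in $L^2(\R^n)$ as $\tau\to0^+$, using strong continuity of the heat semigroup and of the dilation group on $L^2(\R^n)$.

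Next, the potential $V$ from \eqref{Eq:Pot_explicit} is a rational function with $|D^kV(x)|\lesssim\langle x\rangle^{-2-k}$ for all $k\in\{0,\dots,\kappa_1\}$, so $V\in X$ by Lemma~\ref{Le:finX}; since $X$ is a Banach algebra (Lemma~\ref{Lem:Banach_alg}), $L'f=Vf$ is bounded on $X$ with $\|L'\|_{X\to X}\lesssim\|V\|_X$. On $\mc H$ the semigroup $S(\tau)$ is represented by the Duhamel (Dyson) series $S(\tau)=\sum_{m\ge0}S_m(\tau)$ with $S_0$ the free semigroup and $S_{m+1}(\tau)f=\int_0^\tau S_0(\tau-s)\,L'S_m(s)f\,ds$. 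For $f\in X$ every term lies in $X$, the iterated integrals are Bochner integrals in $X$ (the integrands being $X$-continuous by strong continuity of $S_0$ on $X$ and induction), and $\|S_m(\tau)\|_{X\to X}\le(\|L'\|_{X\to X}\tau)^m/m!$; thus the series converges in $X$, showing $S(\tau)X\subseteq X$ and $\|S(\tau)\|_{X\to X}\le e^{\|L'\|_{X\to X}\tau}$. Because $X\hookrightarrow\mc H$ continuously (Lemma~\ref{Lem:EmbXH}) and $S_0(\tau)$ and $L'$ act by the same formulas on both spaces, this $X$-valued series agrees, under the embedding, with the $\mc H$-valued one, so $S_X(\tau):=S(\tau)|_X$ is well defined; it inherits the algebraic semigroup law from $\mc H$, and strong continuity on $X$ follows from $\|S_X(\tau)f-f\|_X\le\|S_0(\tau)f-f\|_X+\sum_{m\ge1}\|S_m(\tau)f\|_X\to0$. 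Hence $\{S_X(\tau)\}$ is a strongly continuous semigroup on $X$.

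To identify its generator I would use the standard description of subspace semigroups: if a $C_0$-semigroup on a Banach space leaves a continuously embedded Banach subspace invariant and restricts to a $C_0$-semigroup there, then the generator of the restriction is the part of the original generator in that subspace. Concretely, writing $B$ for the generator of $\{S_X(\tau)\}$, the continuity of $X\hookrightarrow\mc H$ gives $B\subseteq\mc L_X$; conversely, for $f\in\mc D(\mc L_X)$ one has $S(\tau)f-f=\int_0^\tau S(s)\mc Lf\,ds$ in $\mc H$, and since $\mc Lf\in X$ and $s\mapsto S_X(s)\mc Lf$ is $X$-continuous, this identity holds as an $X$-valued Bochner integral, whence $\tfrac1\tau(S_X(\tau)f-f)=\tfrac1\tau\int_0^\tau S_X(s)\mc Lf\,ds\to\mc Lf$ in $X$ and $f\in\mc D(B)$. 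Therefore the generator of $\{S_X(\tau)\}$ is exactly $\mc L_X$.

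It remains to show that $\mc S_{\mathrm{rad}}(\R^n)$ is a core of $\mc L_X$, for which I would verify the hypotheses of the usual core criterion --- a subspace of the domain that is dense in the ambient space and invariant under the semigroup is a core. First, every $f\in\mc S_{\mathrm{rad}}(\R^n)$ satisfies $|D^kf(x)|\lesssim\langle x\rangle^{-2-k}$ for $k\in\{0,\dots,\kappa_1\}$, so Lemma~\ref{Le:finX} gives $f\in X$, $f\in\mc D(\mc L)$ and $\mc Lf\in X$; hence $\mc S_{\mathrm{rad}}(\R^n)\subseteq\mc D(\mc L_X)$. Second, $\mc S_{\mathrm{rad}}(\R^n)$ is dense in $X$ since it contains $C^\infty_{c,\mathrm{rad}}(\R^n)$, which is dense in $X$ by definition. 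Third, $\mc S_{\mathrm{rad}}(\R^n)$ is $S_X(\tau)$-invariant: from \eqref{Eq:FreeSemigroup}, $S_0(\tau)$ maps $\mc S_{\mathrm{rad}}(\R^n)$ into itself and, for $\tau$ in a bounded interval, is bounded on each Schwartz seminorm uniformly in $\tau$ (again using that $D^\beta$ commutes with the Gaussian convolution, that convolution by $G_{\alpha(\tau)}$ is seminorm-bounded uniformly for $\alpha(\tau)\in[0,1)$, and the dilation scaling), while $L'$ maps $\mc S_{\mathrm{rad}}(\R^n)$ into itself, boundedly in each seminorm, since $V$ and all its derivatives are bounded; consequently the Dyson series converges in every Schwartz seminorm, so $S_X(\tau)f=S(\tau)f\in\mc S_{\mathrm{rad}}(\R^n)$ whenever $f\in\mc S_{\mathrm{rad}}(\R^n)$. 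The core criterion then yields the assertion. I expect the only genuinely technical point to be this last invariance step --- the uniform Schwartz-seminorm control of the Ornstein--Uhlenbeck semigroup, and hence of the Dyson series; the rest is bounded-perturbation bookkeeping resting on Lemmas~\ref{Lem:EmbXH}, \ref{Lem:Banach_alg} and~\ref{Le:finX}.
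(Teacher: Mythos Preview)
Your proof is correct and follows essentially the same strategy as the paper: exploit the explicit free semigroup \eqref{Eq:FreeSemigroup} to get $X$-bounds via Young's inequality, show $V\in X$ and use the algebra property of $X$ to make $L'$ bounded on $X$, and then pass to the full semigroup by bounded perturbation. The paper assembles these pieces slightly differently --- it first invokes the abstract bounded-perturbation theorem to produce a semigroup on $X$ generated by $\mc L_X$, and only afterwards identifies it with $S(\tau)|_X$ via uniqueness for the Duhamel equation (Gronwall) --- whereas you go directly through the Dyson series; but these are equivalent packagings of the same idea.

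The one place where the paper's route is genuinely more economical is the core argument. You prove that $\mc S_{\mathrm{rad}}(\R^n)$ is $S_X(\tau)$-invariant by controlling the full Dyson series in every Schwartz seminorm, which (as you note) is the most technical step in your write-up. The paper avoids this entirely: it only checks that $S_0(\tau)$ maps $\mc S_{\mathrm{rad}}(\R^n)$ to itself --- immediate from the explicit Gaussian-convolution-plus-dilation formula --- concludes that $\mc S_{\mathrm{rad}}(\R^n)$ is a core for $\mc L_0|_X$, and then uses that a bounded perturbation does not change cores. This removes precisely the uniform Schwartz-seminorm control you were worried about. A minor aside: your justification that $c_0>0$ appeals to $\kappa_0<\tfrac n2$, but what is actually needed (and true, by the choice of $\kappa_0$) is $\kappa_0>\tfrac n2-2$.
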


\begin{proof}
We first exploit the explicit representation \eqref{Eq:FreeSemigroup} to prove the invariance of $X$ under $S_0(\tau)$, and perturbatively extend this to $S(\tau)$. Since $G_{\alpha(\tau)} \in L^1(\R^n)$ for every $\tau >0$, from Eq.~\eqref{Eq:FreeSemigroup} via Young's inequality we get
\begin{align}\label{Est:FreeEvol_Growth}
\|D^{k} S_0(\tau) f\|_{L^2(\R^n)} \lesssim e^{ \frac{1}{2} (\frac{n}{2} - 2 - k )\tau}\| D^k f \|_{L^2(\R^n)}
\end{align}
for all $k \in \N_0$. In particular, $X$ is invariant under $S_0(\tau)$ for all $\tau \geq 0$. By using again the explicit form of the semigroup, rescaling, and Minkowski's inequality we get
\begin{align*}
\|D^k[ S_0(\tau) f  & - f] \|_{L^2(\R^n)}  \\
& \lesssim e^{-\frac{k}{2} \tau} \int_{\R^n} \| (D^k f)(e^{-\frac{\tau}{2}}(\cdot) - \alpha(\tau)^{\frac12} y) - D^k f \|_{L^2(\R^n)} dy.
\end{align*}
Furthermore, by dominated convergence we infer that  $\|D^k[ S_0(\tau) f - f] \|_{L^2(\R^n)} \to 0$ as $\tau \to 0^+$. This shows that the free semigroup is strongly continuous on $X$. By Lemma \ref{Lem:EmbXH} and a standard result from semigroup theory, see \cite{EngNag00}, p.~60, Chap.~2, Sec.~2.3, we infer that the part of the operator $\mc L_0$ in $X$ defined as $\mc L_0|_X f := \mc L_0 f$, 
\[\mc D(\mc L_0|_X) =  \{ f \in \mc D(\mc L_0) \cap X: \mc L_0 f \in X \}\]
generates the restricted semigroup $\{S_0|_X(\tau): \tau \geq 0 \}$. An application of Lemma \ref{Le:finX} shows that $V \in X$, and thus
\begin{align}\label{eq:BoundPot}
\| V f\|_{X} \lesssim \|V \|_{X} \|f \|_X 
\end{align}
by Lemma \ref{Lem:Banach_alg}. Now, by the bounded perturbation theorem, $\mc L_X$  generates a strongly continuous semigroup $\{S_X(\tau): \tau \geq 0 \}$ on $X$. 
It remains to prove that $S_X(\tau)$ is in fact a restriction of $S(\tau)$ on $X$. For $f\in X$, $\{S(\tau)f:\tau\geq 0\}$ and $\{S_X(\tau)f:\tau\geq 0\}$ are continuous curves in $\mc H$ and $X$ respectively, and, due to boundedness of $L'$ on both $\mc H$ and $X$, both curves satisfy the following integral equation in $u$
\begin{equation}\label{Eq:Duhamel}
	u(\tau)=S_0(\tau)f-\int_0^\tau S_0(\tau-s)L'u(s)ds,
\end{equation} 
see \cite{Kat95}, Chap.~9, Sec.~2.1. But, by Gronwall's lemma, Eq.~\eqref{Eq:Duhamel} has a unique solution in $C([0,\infty),\mc H)$. Therefore $S(\tau)$ and $S_X(\tau)$ agree on $X$ and the first claim of the proposition follows. The density of $\mc S_{\mathrm{rad}}(\R^n)$ in  $X$  and the fact that $S_0(\tau)$ leaves radial Schwartz functions invariant implies that $\mc S_{\mathrm{rad}}(\R^n)$ is a core of $\mc L_0|_X$ and hence of $\mc L_X$ as a bounded perturbation.
\end{proof}

\begin{remark}
	From Eq.~\eqref{Est:FreeEvol_Growth} it follows that the free time evolution is growing exponentially in homogeneous Sobolev norms below scaling, hence having to choose both $\kappa_0$ and $\kappa_1$ greater than $n/2-2$.
\end{remark}

\begin{lemma}
The projection operator  $\mc P$ defined in Eq.~\eqref{Def:OrthProj} induces a (non-orthogonal) projection $\mc P_X$ on $ X$,
\[ \mc P_X f = (f|\mb g)_{L^2_{\sigma}(\R^n)} \mb g \quad \text{ for } f \in X, \] 
which commutes with operator $\mc L_X$ and the semigroup $S_X(\tau)$ for all $\tau  \geq 0$. Furthermore, 
\[\mathrm{ker} ~ \mc P_X  = \{ f \in X:  (f|\mb g)_{L^2_{\sigma}(\R^n)}  = 0 \}.\] 
\end{lemma}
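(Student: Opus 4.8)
The plan is to verify each of the three assertions — that $\mathcal P_X$ is a bounded projection on $X$, that it commutes with $\mathcal L_X$ and $S_X(\tau)$, and the description of its kernel — by combining the mapping properties of $\mathcal P$ already established with the continuous embedding $X \hookrightarrow \mathcal H$ from Lemma~\ref{Lem:EmbXH}. First I would observe that $\mathcal P f = (f|\mb g)_{L^2_\sigma(\R^n)}\mb g$ makes sense for $f\in X$ since $X\subseteq\mathcal H$, and that its range is spanned by the single function $\mb g$. The key point is that $\mb g\in X$: indeed $\mb g$ is a constant multiple of $g(x)=(a|x|^2+b)^{-2}$, and since $b>0$ (one checks $b = n(3-\tfrac12\sqrt{2n-8})-12 > 0$ for $5\le d\le 9$, i.e. $7\le n\le 11$) the function $g$ is smooth, radial, and satisfies $|D^k g(x)|\lesssim\langle x\rangle^{-4-k}\le\langle x\rangle^{-2-k}$ for all $k$; hence $g\in X$ by Lemma~\ref{Le:finX}, and $\mathcal L g\in X$ as well. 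Therefore $\mathcal P_X$ indeed maps $X$ into $X$, and its boundedness follows from Cauchy–Schwarz together with the embedding: $\|\mathcal P_X f\|_X = |(f|\mb g)_{L^2_\sigma(\R^n)}|\,\|\mb g\|_X \le \|f\|_{L^2_\sigma(\R^n)}\|\mb g\|_X \lesssim \|f\|_X$. Idempotency, $\mathcal P_X^2=\mathcal P_X$, is immediate from $\|\mb g\|_{L^2_\sigma(\R^n)}=1$. The projection is non-orthogonal because the $X$-inner product is not the $L^2_\sigma$-inner product that defines it.

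Next I would establish commutation with $\mathcal L_X$ and $S_X(\tau)$. For the semigroup: since $S_X(\tau)$ is the restriction of $S(\tau)$ to $X$ (proved in the preceding proposition) and $\mathcal P_X$ is the restriction of $\mathcal P$ to $X$, the identity $S_X(\tau)\mathcal P_X f = S(\tau)\mathcal P f = \mathcal P S(\tau)f = \mathcal P_X S_X(\tau)f$ for $f\in X$ follows directly from the commutation $S(\tau)\mathcal P=\mathcal P S(\tau)$ on $\mathcal H$, which is a consequence of Proposition~\ref{Prop:Spectrum} (both are functions of $\mathcal L$, or concretely $\mathcal P$ is the Riesz projection onto the eigenspace for $\lambda=1$); one just needs that $S(\tau)f$ and $\mathcal P f$ lie in $X$, which they do since $X$ is $S(\tau)$-invariant and $\mathcal P f\in\mathrm{span}\{\mb g\}\subseteq X$. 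Commutation with the generator then follows from commutation with the semigroup by differentiating at $\tau=0$ on the domain $\mathcal D(\mathcal L_X)$, using that $\mathcal P_X$ is bounded on $X$: for $f\in\mathcal D(\mathcal L_X)$, $\mathcal P_X\mathcal L_X f = \lim_{\tau\to0^+}\mathcal P_X\tfrac{1}{\tau}(S_X(\tau)-1)f = \lim_{\tau\to0^+}\tfrac{1}{\tau}(S_X(\tau)-1)\mathcal P_X f = \mathcal L_X\mathcal P_X f$, which in particular shows $\mathcal P_X f\in\mathcal D(\mathcal L_X)$ (this is consistent since $\mathcal P_X f\in\mathrm{span}\{\mb g\}$ and $\mb g\in\mathcal D(\mathcal L_X)$, $\mathcal L\mb g=\mb g$).

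Finally, for the kernel: the inclusion $\{f\in X: (f|\mb g)_{L^2_\sigma(\R^n)}=0\}\subseteq\ker\mathcal P_X$ is obvious from the formula. Conversely, if $\mathcal P_X f=0$ then $(f|\mb g)_{L^2_\sigma(\R^n)}\mb g = 0$ in $X$, and since $\mb g\neq 0$ in $X$ we get $(f|\mb g)_{L^2_\sigma(\R^n)}=0$; hence the two sets coincide. I do not anticipate a serious obstacle here — the main thing to be careful about is confirming $\mb g\in X$ (equivalently $b>0$) so that all the restriction/commutation arguments make sense inside $X$ rather than merely in $\mathcal H$, and making sure the passage from semigroup commutation to generator commutation is done on the correct domain.
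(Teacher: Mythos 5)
Your proposal is correct and follows essentially the same route as the paper: establish $\mb g\in X$ via the decay estimate and Lemma~\ref{Le:finX}, obtain boundedness of $\mc P_X$ from Cauchy--Schwarz together with the embedding \eqref{Eq:XembH}, and deduce the commutation and kernel statements from the corresponding properties of $\mc P$ and $S(\tau)$ on $\mc H$. The paper's proof is terse on the latter points ("the other properties follow from the properties of the semigroup on $\mc H$"), and your detailed filling-in — restriction of the $\mc H$-commutation to the $S(\tau)$-invariant subspace $X$, then differentiation at $\tau=0$ for the generator — is exactly the intended argument.
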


\begin{proof}
The decay of $\mb g$ and Lemma \ref{Le:finX} imply that $\mb g \in X$. By Cauchy-Schwarz and relation \eqref{Eq:XembH} we get
\[ \| \mc P_X f \|_{X} = \big  |(f|\mb g)_{L^2_{\sigma}(\R^n)} \big | \|\mb g\|_X  \leq \|f \|_{L^2_{\sigma}(\R^n)}  \|\mb g\|_X  \lesssim \|f \|_X.  \]
The other properties follow from the properties of the semigroup on $\mc H$. 
\end{proof}

In the following, when there is no confusion, we drop the subscript for $\mc L_X$, $S_{X}(\tau)$  and  $\mc P_X$ for the sake of readability.
To derive a suitable growth bound for the semigroup on $X$, the following Lemma is crucial. 

\begin{lemma}\label{Le:DissEst}
Let $R \geq 1$. Then for all $f \in  \mc D(\mc L_X)$ we have 
\begin{align}\label{Inequ:DissEst}
(\mc L f| f)_X \leq (- \tilde{\omega} + \tfrac{C}{R^2} )\|f \|^2_X + C_R \sum_{j=0}^{\kappa_1} \|f \|^2_{\mc G((1- \mc L)^{j/2})}
\end{align}
with $\tilde{\omega}  : = \frac{1}{2}(\kappa_0 - \frac{n}{2} +2) >0$.
\end{lemma}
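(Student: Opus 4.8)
The plan is to expand $(\mc Lf|f)_X$ using the definition of the $X$-inner product, which reduces everything to controlling $(D^{\kappa_0}\mc Lf|D^{\kappa_0}f)_{L^2(\R^n)}$ and $(D^{\kappa_1}\mc Lf|D^{\kappa_1}f)_{L^2(\R^n)}$. First I would write $\mc L = \Delta - \Lambda + V$ on smooth radial functions (as recorded just before the lemma), and commute $D^k$ past each piece for $k\in\{\kappa_0,\kappa_1\}$. The Laplacian commutes with $D^k$, so $(D^k\Delta f|D^kf)_{L^2}=(\Delta D^kf|D^kf)_{L^2}=-\|\nabla D^kf\|_{L^2}^2\le 0$, which is the favorable term and may simply be discarded. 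For the dilation part, the commutator relation \eqref{Eq:Commutator}, $D^k\Lambda = \Lambda D^k + \tfrac{k}{2}D^k$, gives
\begin{align*}
-(D^k\Lambda f|D^kf)_{L^2} = -(\Lambda D^kf|D^kf)_{L^2} - \tfrac{k}{2}\|D^kf\|_{L^2}^2.
\end{align*}
An integration by parts shows $(\Lambda g|g)_{L^2(\R^n)} = (\tfrac12 x\cdot\nabla g + g|g)_{L^2} = (1-\tfrac{n}{4})\|g\|_{L^2}^2$, so the dilation term contributes $(\tfrac{n}{4} - 1 - \tfrac{k}{2})\|D^kf\|_{L^2}^2$. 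Taking $k=\kappa_0$ yields the coefficient $-\tfrac12(\kappa_0 - \tfrac{n}{2}+2) = -\tilde\omega$, and $k=\kappa_1=\kappa_0+2$ gives a coefficient even more negative, namely $-\tilde\omega - 1$; so summing the two pieces produces $\le -\tilde\omega\,\|f\|_X^2$, exactly the leading negative term claimed.

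The remaining work is to absorb the potential term $(D^k(Vf)|D^kf)_{L^2}$ for $k=\kappa_0,\kappa_1$. Here I would use the Leibniz-type formula \eqref{Eq:Leibnitz} to expand $D^k(Vf)$ into $VD^kf$ plus lower-order terms each carrying at least one derivative of $V$ (and possibly negative powers of $|x|$ compensated by derivatives of $f$). The key structural fact is that $V$ and all its derivatives decay: from \eqref{Eq:Pot_explicit} one has $|D^jV(x)|\lesssim \langle x\rangle^{-2-j}$ (indeed $V$ satisfies the hypotheses of Lemma~\ref{Le:finX}). One then splits $\R^n = \B_R^n \cup (\R^n\setminus\B_R^n)$. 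On the exterior region all the coefficient functions multiplying the top-order factor $D^kf$ are bounded by $C/R^2$ (the slowest-decaying being $V$ itself, $\sim\langle x\rangle^{-2}\lesssim R^{-2}$ there), giving a contribution $\le \tfrac{C}{R^2}\|f\|_X^2$ after Cauchy–Schwarz and the Hardy inequality \eqref{Eq:Hardy} to handle the negative-power factors $|\cdot|^{-j_0}D^{j_1}f D^{j_2}g$ in the last sum of \eqref{Eq:Leibnitz}. On the bounded region $\B_R^n$, every term is controlled by $C_R\sum_{j=0}^{\kappa_1}\|D^jf\|_{L^2(\B_R^n)}^2$, and Lemma~\ref{Le:Control_boundednorms} converts this into $C_R\sum_{j=0}^{\kappa_1}\|f\|_{\mc G((1-\mc L)^{j/2})}^2$, which is precisely the second term on the right-hand side of \eqref{Inequ:DissEst}.

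The main obstacle I anticipate is the bookkeeping in the exterior estimate: one must verify that \emph{every} term arising from \eqref{Eq:Leibnitz} applied to $Vf$ — including the mixed terms with negative powers of $|x|$ — either carries a factor decaying like $R^{-2}$ on $\R^n\setminus\B_R^n$ or else carries enough derivatives of $V$ that the slower decay is irrelevant, and that after applying Hardy's inequality no derivative of order exceeding $\kappa_1$ appears. Since $\kappa_0 < n/2$, all the negative powers $|\cdot|^{-j}$ with $j\le\kappa_0$ fall under the Hardy range, so this works, but checking the worst case (all derivatives landing on $f$, so the coefficient is the undifferentiated $V\sim R^{-2}$) against the borderline indices $k=\kappa_1$ requires care. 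Everything else — the sign of the Laplacian term, the exact arithmetic of the dilation commutator giving $-\tilde\omega$, and the reduction on $\B_R^n$ via Lemma~\ref{Le:Control_boundednorms} — is routine.
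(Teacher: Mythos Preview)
Your plan is correct and follows essentially the same route as the paper: commute $D^k$ past $\mc L$, drop the Laplacian term, compute the dilation contribution via \eqref{Eq:Commutator} and the identity $(\Lambda g|g)_{L^2}=(1-\tfrac{n}{4})\|g\|_{L^2}^2$ to extract exactly $-\tilde\omega\|f\|_X^2$, then split the potential term into $\B_R^n$ (handled by Lemma~\ref{Le:Control_boundednorms}) and its complement (handled by the decay of $V$ and Hardy).

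One small correction to your ``worst case'' bookkeeping: the term $V D^{\kappa_1} f$ (all derivatives on $f$) is actually the \emph{easiest} exterior term, since no Hardy step is needed at all. The genuinely delicate terms are the opposite extreme, $D^j V\cdot D^{\kappa_1-j}f$ with $j\in\{\kappa_1-1,\kappa_1\}$, because then Hardy with exponent $j\ge n/2$ fails. The paper resolves this exactly as your general strategy suggests: on $\R^n\setminus\B_R^n$ one trades two powers of $|\cdot|^{-1}$ for a constant, i.e.\ $\||\cdot|^{-j}D^{\kappa_1-j}f\|_{L^2(\R^n\setminus\B_R^n)}\le\||\cdot|^{-(j-2)}D^{\kappa_1-j}f\|_{L^2(\R^n)}\lesssim\|D^{\kappa_0}f\|_{L^2(\R^n)}$, since $j-2\le\kappa_0<n/2$ and $\kappa_1-2=\kappa_0$. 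This routes the estimate through the $\kappa_0$-part of the $X$-norm and costs nothing in the $R^{-2}$ budget (the extra decay $|D^jV|\lesssim\langle x\rangle^{-2-j}$ more than compensates).
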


\begin{proof}
We prove Eq.~\eqref{Inequ:DissEst} for $f \in \mc S_{\mathrm{rad}}(\R^n)$ only, as the full claim follows from Lemma \ref{Le:XhookrightH}, the closedness of $\mc L_X$ and the fact that $\mc S_{\mathrm{rad}}(\R^n)$ is its core. The commutator relation \eqref{Eq:Commutator} yields 
\[ D^{k} \mc L f = D^{k +2} f -  D^{k} \Lambda f + D^k (V f)  =  D^{k +2} f  -  \Lambda D^k f - \frac{k}{2} D^k f +  D^k (V f). \]
We then use $(\Lambda f| f)_{L^2(\R^n)} = (1 - \frac{n}{4}  ) \| f \|^2_{L^2(\R^n)}$ to obtain the following estimate
\begin{align*}
\begin{split}
(  D^{k} \mc L f | D^{k}  f)_{L^2(\R^n)} &  \leq - (\Lambda D^{k}  f|D^{k}  f)_{L^2(\R^n)} -\tfrac{k}{2} \| D^k  f \|^2_{L^2(\R^n)}   + (D^{k} (V f) |  D^{k}  f )_{L^2(\R^n)} \\
& \leq -\tfrac12 (k - \tfrac{n}{2} + 2)  \| D^{k}  f \|^2_{L^2(\R^n)}  + (D^k (V f) |  D^{k}  f )_{L^2(\R^n)}.
\end{split}
\end{align*}
This in turn yields the bound
\begin{align*}
 (\mc L f| f)_X \leq -\tilde{\omega} \|f \|^2_X  + (V f|f)_{X}.
\end{align*}
To estimate the last term, we exploit the decay of the potential at infinity and Lemma \ref{Le:Control_boundednorms}. We use the Leibnitz formula \eqref{Eq:Leibnitz} and estimate for $j = 0, \dots, k$,
\begin{align*}
|( D^{j} V D^{k-j} f| D^{k} f)_{L^2(\R^n)}| \leq \||D^{j}   V|^{\frac{j+1}{j+2}} D^{k-j} f \|^2_{L^2(\R^n)}  +  \||D^{j} V|^{\frac{1}{j+2}} D^{k} f \|^2_{L^2(\R^n)}.
\end{align*}
For the last term, we get for every $R \geq 1$,
\begin{align*}
\||D^{j} V|^{\frac{1}{j+2}} D^{k} f \|_{L^2(\R^n)} &  \leq C_R \|D^{k} f \|_{L^2(\B^n_R)} + \tfrac{C}{R} \|D^{k} f \|_{L^2(\R^n\setminus \B^n_R)} \\
& \leq  C_R \sum_{i = 0}^{k}  \|f \|_{\mc G((1- \mc L)^{i/2})}   +  \tfrac{C}{R} \|D^{k} f \|_{L^2(\R^n)}.
\end{align*} 
For the first term we argue similarly and use Eq.~\eqref{Ineq:Emb_Dk1} to obtain
\begin{align*}
\||D^{j} V|^{\frac{j+1}{j+2}} D^{k-j} f \|_{L^2(\R^n)}   \leq C_R \sum_{i = 0}^{k-j}  \|f \|_{\mc G((1- \mc L)^{i/2})}   +   C\| | \cdot |^{-j} D^{k-j} f \|_{L^2(\R^n\setminus \B^n_R)}.
\end{align*}
For $k = \kappa_0 < \frac{n}{2}$, Hardy's inequality yields 
\[ |(D^{\kappa_0} (V f) |  D^{\kappa_0}  f )_{L^2(\R^n)}| \leq C_R \sum_{j = 0}^{\kappa_0}  \|f \|^2_{\mc G((1- \mc L)^{j/2})}   +  \tfrac{C}{R^2} \|D^{\kappa_0} f \|^2_{L^2(\R^n)}. \]
For $k = \kappa_1 > \frac{n}{2}$, we estimate
\[   \| |\cdot|^{-j} D^{\kappa_1-j} f \|_{L^2(\R^n)}  \lesssim \| D^{\kappa_1} f \| _{L^2(\R^n)} \]
for $j = 0,\dots \kappa_0$ and treat separately the cases $j \in  \{ \kappa_1 - 1, \kappa_1 \}$ for which we get 
\[
  \| |\cdot|^{-j} D^{\kappa_1-j} f\|_{L^2(\R^n\setminus \B^n_R)} \lesssim
  \| |\cdot|^{-j+2} D^{\kappa_1-j} f \|_{L^2(\R^n\setminus \B^n_R)}
    \lesssim  \| D^{\kappa_0} f \| _{L^2(\R^n)}.
\]
The second sum is treated similarly along with Hardy's inequality. This implies that 
\[ |(V f|f)_{X}| \leq  C_R \sum_{j = 0}^{\kappa_1}  \|f \|^2_{\mc G((1- \mc L)^{j/2})} +   \tfrac{C}{R^2 } \| f \|^2_X \]
for some constants $C, C_R >0$ and Eq.~\eqref{Inequ:DissEst} follows.
\end{proof}

Finally, we obtain the desired growth bound for the linearized time evolution on $X$. 

\begin{proposition}\label{Prop:DecayX}
There exists $\omega > 0$ such that
\begin{align}\label{Eq:Decay_on_X}
\| S(\tau) (1- \mc P) f \|_X   \lesssim e^{-\omega \tau } \|(1- \mc P) f \|_X,
\end{align}
for all $f \in X$ and all $\tau \geq 0$. 
\end{proposition}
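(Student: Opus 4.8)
The plan is to combine the dissipative estimate of Lemma~\ref{Le:DissEst} with the decay of the semigroup on the weighted space $\mc H$ (Proposition~\ref{Prop:Growthbounds_Graph}) in order to upgrade the latter to a decay estimate on $X$. Since $\mc P_X$ commutes with the semigroup and $(1-\mc P)f\in\ker\mc P_X$, it suffices to prove the estimate for $f\in\ker\mc P_X$ and show that the subspace $\ker\mc P_X$ is invariant under $S(\tau)$; then we may work directly with $g(\tau):=S(\tau)(1-\mc P)f$, which stays in $\ker\mc P_X$ for all $\tau\geq0$.

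First I would reduce to smooth data: by density of a core (e.g.\ $\mc S_{\mathrm{rad}}(\R^n)$) in $\mc D(\mc L_X)$, and continuity of everything in sight, it is enough to establish the bound along the flow for $f\in\mc D(\mc L_X)\cap\ker\mc P_X$, where $\tau\mapsto g(\tau)$ is a $C^1$ curve in $X$ solving $g'(\tau)=\mc L g(\tau)$. Then I would compute $\tfrac{d}{d\tau}\|g(\tau)\|_X^2 = 2(\mc L g(\tau)\,|\,g(\tau))_X$ and feed in Lemma~\ref{Le:DissEst}: fixing $R$ large enough that $C/R^2<\tilde\omega/2$, one gets
\begin{align*}
\tfrac{d}{d\tau}\|g(\tau)\|_X^2 \leq -\tilde\omega\,\|g(\tau)\|_X^2 + C_R\sum_{j=0}^{\kappa_1}\|g(\tau)\|_{\mc G((1-\mc L)^{j/2})}^2.
\end{align*}
The second, ``error'' term is precisely what Proposition~\ref{Prop:Growthbounds_Graph} controls: since $g(\tau)\in\ker\mc P_X=\ker\mc P\cap X$, we have $g(\tau)=S(\tau)(1-\mc P)f$ and hence, by Proposition~\ref{Prop:Growthbounds_Graph} together with Lemma~\ref{Le:XhookrightH} (which gives $\|(1-\mc L)^{j/2}f\|_{L^2_\sigma}\lesssim\|f\|_X$ for $j\leq\kappa_1$),
\begin{align*}
\sum_{j=0}^{\kappa_1}\|g(\tau)\|_{\mc G((1-\mc L)^{j/2})}^2 \lesssim e^{-2\omega_0\tau}\sum_{j=0}^{\kappa_1}\|(1-\mc P)f\|_{\mc G((1-\mc L)^{j/2})}^2 \lesssim e^{-2\omega_0\tau}\|(1-\mc P)f\|_X^2.
\end{align*}
Plugging this in gives a differential inequality of the form $h'(\tau)\leq-\tilde\omega\,h(\tau)+K e^{-2\omega_0\tau}\|(1-\mc P)f\|_X^2$ for $h(\tau):=\|g(\tau)\|_X^2$, and Gronwall's lemma yields $h(\tau)\lesssim e^{-\min(\tilde\omega,2\omega_0)\tau/2}\cdot(\text{const})$... more precisely $h(\tau)\leq e^{-\tilde\omega\tau}h(0)+K\|(1-\mc P)f\|_X^2\int_0^\tau e^{-\tilde\omega(\tau-s)}e^{-2\omega_0 s}ds \lesssim e^{-2\omega\tau}\|(1-\mc P)f\|_X^2$ with $\omega:=\tfrac12\min\{\tilde\omega,2\omega_0\}$ (shrinking $\omega$ slightly to absorb the $\tau e^{-\tilde\omega\tau}$ resonant case if $\tilde\omega=2\omega_0$). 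Taking square roots gives \eqref{Eq:Decay_on_X}.

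The main obstacle is the bookkeeping needed to legitimately differentiate $\|g(\tau)\|_X^2$ and apply Lemma~\ref{Le:DissEst} along the orbit: one must know $g(\tau)\in\mc D(\mc L_X)$ for all $\tau$ (which holds for $f$ in the domain, and is where restricting to a core matters), that $\ker\mc P_X$ is $S(\tau)$-invariant (immediate from commutation of $\mc P_X$ with $S(\tau)$), and that the quantities $\|g(\tau)\|_{\mc G((1-\mc L)^{j/2})}$ appearing on the right are finite and depend on $\tau$ as claimed --- this last point uses Lemma~\ref{Le:XhookrightH} to pass from the $X$-norm of the data into graph norms on $\mc H$, and then Proposition~\ref{Prop:Growthbounds_Graph} to propagate the decay. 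Everything else is a routine Gronwall argument. A minor point to handle with care is the case where the two exponential rates $\tilde\omega$ and $2\omega_0$ coincide, producing a harmless polynomial factor that is absorbed by choosing $\omega$ strictly smaller than $\tfrac12\min\{\tilde\omega,2\omega_0\}$.
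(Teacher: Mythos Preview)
Your proposal is correct and follows essentially the same route as the paper: differentiate $\|S(\tau)(1-\mc P)f\|_X^2$, apply Lemma~\ref{Le:DissEst}, absorb the $C/R^2$ term by choosing $R$ large, control the remaining graph-norm error via Proposition~\ref{Prop:Growthbounds_Graph} and Lemma~\ref{Le:XhookrightH}, and close with a Gronwall argument. One harmless slip: after fixing $R$ with $C/R^2<\tilde\omega/2$ the coefficient in front of $\|g(\tau)\|_X^2$ should read $-\tilde\omega/2$ rather than $-\tilde\omega$, and there is a missing factor of $2$ from $\tfrac{d}{d\tau}\|g\|_X^2=2(\mc Lg|g)_X$; neither affects the conclusion.
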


\begin{proof}
We prove Eq.~\eqref{Eq:Decay_on_X} only for $f \in C^\infty_{c,\mathrm{rad}}(\R^n)$, as the full claim follows from density and the boundedness of operators $\mc P$ and $\mc S(\tau)$. From Lemma \ref{Le:finX} it follows that $\tilde{f}:=(1-\mc P)f$ belongs to $\mc D(\mc L_X)$. Therefore, we can use Lemma \ref{Le:DissEst}, Proposition \ref{Prop:Growthbounds_Graph} and Lemma \ref{Le:XhookrightH} and choose $R >0$ sufficiently large to obtain 
\begin{align*}
\tfrac{1}{2}  \tfrac{d}{d\tau} \|S(\tau)\tilde{f}  \|^2_{X} &   = ( \partial_{\tau} S(\tau) \tilde{f}|S(\tau) \tilde{f})_X  = ( \mc L S(\tau)  \tilde{f}|S(\tau) \tilde{f})_X  \\
& \leq (-\tilde{\omega}   + \tfrac{C}{R^2}) \|S(\tau) \tilde{f}\|^2_X   + 
C_R \sum_{j=0}^{\kappa_1} \|S(\tau)\tilde{f} \|^2_{\mc G((1- \mc L)^{j/2})}  \\
& \leq  -\tfrac{\tilde{\omega} }{2} \|S(\tau) \tilde{f}\|^2_X+ C e^{-2 \omega_0 \tau}  \sum_{j=0}^{\kappa_1} \| \tilde{f} \|^2_{\mc G((1- \mc L)^{j/2})} \\
&  \leq  -\tfrac{\tilde{\omega} }{2}  \|S(\tau) \tilde{f}\|^2_X +  C e^{-2 \omega_0 \tau}  \|\tilde{f} \|^2_X  \leq - 2 c_0  \|S(\tau) \tilde{f}\|^2_X  + C e^{-4 c_0 \tau}  \|\tilde{f} \|^2_X
\end{align*}
for $c_0 = \frac{1}{2} \min \{\omega_0, \tfrac{\tilde{\omega} }{2} \}$. This inequality can be written as 
\begin{align*}
\tfrac{1}{2} \tfrac{d}{d\tau}  \left[ e^{4 c_0 \tau}   \|S(\tau) \tilde{f}\|^2_X  \right] \leq C \|\tilde{f} \|^2_X
\end{align*}
and integration yields
\[   \|S(\tau) \tilde{f}\|^2_X \leq (1 + 2 C \tau ) e^{-4 c_0 \tau}  \|\tilde{f} \|^2_X  \lesssim e^{-2 \omega \tau}   \|\tilde{f} \|^2_X ,\]
for some suitably chosen $\omega  > 0$. This completes the proof.
\end{proof}

\section{Nonlnear time evolution}

\subsection{Estimates for the nonlinearity}
In this section we prove a contraction property of the nonlinearity $\mc N$, see Eq.~\eqref{Eq:Nonlinear_term}, which is necessary in order to run a fixed point argument. First, we need one more property of $X$.

\begin{lemma}\label{Lem:Cubic}
	We have
	\begin{align}\label{Est:Cubic}
	\||\cdot|^2 f_1 f_2  f_3\|_X  \lesssim  \prod_{j=1}^{3} \| f_j \|_X,
	\end{align}
	for all $f_1,f_2,f_3 \in X$.
\end{lemma}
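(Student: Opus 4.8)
The estimate \eqref{Est:Cubic} concerns the quantity $|\cdot|^2 f_1 f_2 f_3$, and the key difference from the algebra property in Lemma \ref{Lem:Banach_alg} is the extra factor $|\cdot|^2$, which increases the homogeneity by two and hence requires absorbing two derivatives' worth of weight. By density and the embedding \eqref{Eq:XembH} it suffices to prove \eqref{Est:Cubic} for $f_1,f_2,f_3 \in C^\infty_{c,\mathrm{rad}}(\R^n)$. I will apply the Leibniz-type formula \eqref{Eq:Leibnitz}, but now to the four-fold product $|\cdot|^2 f_1 f_2 f_3$. A cleaner route is to first write $D^k(|\cdot|^2 f_1 f_2 f_3)$ using the fact that $D^j(|\cdot|^2 h)$ is a linear combination of $|\cdot|^2 D^j h$, $|\cdot| D^{j-1}h$ (coming from the gradient hitting $|\cdot|^2$), and $D^{j-2}h$, each with bounded coefficients; composing this with the Leibniz expansion for $D^{j}(f_1 f_2 f_3)$ reduces everything to terms of the schematic form
\begin{equation*}
|\cdot|^{2-m}\, D^{j_1}f_1\, D^{j_2}f_2\, D^{j_3}f_3, \qquad m\in\{0,1,2\},\ j_1+j_2+j_3 = k - (2-m) - \ell,\ \ell\ge 0,
\end{equation*}
together with additional $|\cdot|^{-j_0}$ factors from the last sum in \eqref{Eq:Leibnitz}.

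\textbf{Key steps.} For $k=\kappa_0$ the situation is easy: $\kappa_0 < n/2$, so by Hardy's inequality \eqref{Eq:Hardy} one can absorb the negative powers $|\cdot|^{-j_0}$, and since $|\cdot|^{2-m}$ has a nonnegative power it is harmless on the compact support, but to get a genuine bound one instead keeps the two worst derivatives on one factor and puts the other two factors in $L^\infty$ via \eqref{Eq:L_inf}; the remaining term $\| |\cdot|^{2}\, D^{\kappa_0} h\|$ for $h$ a product is controlled because $D^{\kappa_0}$ of a product is, by Lemma \ref{Lem:Banach_alg}'s proof, bounded in $L^2$ and the weight $|\cdot|^2$ is bounded on no compact set — so here one should not crudely bound $|\cdot|^2$ but rather note $|\cdot|^2 \le \langle\cdot\rangle^2$ is not integrable; the correct fix is that for $k=\kappa_0$ one uses $|\cdot|^2 = |\cdot|^2$ and the Gagliardo–Nirenberg/Hölder splitting as in Lemma \ref{Lem:Banach_alg}, distributing derivatives so that the top-order factor carries $D^{\kappa_1}$ (not $D^{\kappa_0}$) whenever the weight power would otherwise be problematic — in other words, trading two units of weight for the two-derivative gap $\kappa_1 - \kappa_0 = 2$. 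Concretely, $\| |\cdot|^2 D^{\kappa_0} h\|_{L^2} \lesssim \| |\cdot|^{\kappa_0} \mc F D^{\kappa_0}h\|$ is false in general; instead use that $|\cdot|^2 D^{\kappa_0} h$ has the same scaling as $D^{\kappa_1-\text{(two fewer derivatives)}}$... The honest approach: since $\kappa_1 = \kappa_0 + 2$, for each term distribute the multi-index so that the factor carrying the highest derivative count gets $D^{\kappa_1}$ and estimate $\| |\cdot|^{2-m} D^{j_1}f_1 D^{j_2}f_2 D^{j_3}f_3\|_{L^2}$ by Hölder with exponents tuned so the top factor sits in $L^2$ and the lower ones in the appropriate $L^{p}$, then apply Gagliardo–Nirenberg interpolating between $L^\infty$ (controlled by \eqref{Eq:L_inf}) and $\dot H^{\kappa_1}$; the surviving weight $|\cdot|^{2-m}$ with $m\le 2$ is nonnegative and, on each factor, $\| |\cdot|^{\alpha} D^j f\|_{L^p}$ with $\alpha\ge 0$ is dominated after interpolation by $\|f\|_X$ using that $\kappa_1$ derivatives sit two units above scaling. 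For $k=\kappa_1$ the top-order terms $|\cdot|^2 f_2 f_3 D^{\kappa_1}f_1$ are controlled since $|\cdot|^2 \cdot$ combined with $D^{\kappa_1}$ is exactly at the scaling of $D^{\kappa_1}$ acting on a function with two fewer units of decay, but here one simply uses $\|f_2\|_{L^\infty}\|f_3\|_{L^\infty}\| |\cdot|^2 D^{\kappa_1} f_1\|$... and $\| |\cdot|^2 D^{\kappa_1} f_1\|_{L^2}$ is NOT bounded by $\|f_1\|_X$ — so once again the weight must be absorbed by lowering the derivative: $|\cdot|^2 D^{\kappa_1} = |\cdot|^2 D^2 D^{\kappa_0}$, and $\| |\cdot|^2 D^2 g\|_{L^2} \lesssim \|g\|_{L^2} + \||\cdot|^2 \Delta g\|$ type bounds combined with $\||\cdot|g'\|$ estimates show $\| |\cdot|^2 D^{\kappa_1}f_1\|_{L^2(\text{away from }0)} \lesssim \|D^{\kappa_0}f_1\|_{L^2} + \ldots$, which after careful bookkeeping is $\lesssim \|f_1\|_X$; near the origin the weight $|\cdot|^2$ only helps. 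Finally the $|\cdot|^{-j_0}$ terms with $j_0 \le 2$ are handled by Hardy \eqref{Eq:Hardy} since $j_0 < n/2$ for $n\ge 7$.

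\textbf{Main obstacle.} The genuinely delicate point, and the one I would spend the most care on, is the simultaneous bookkeeping of \emph{positive} weight powers $|\cdot|^{2-m}$ (from $|\cdot|^2$) and \emph{negative} weight powers $|\cdot|^{-j_0}$ (from \eqref{Eq:Leibnitz}) together with the derivative distribution: one must verify that in every term either (i) the net weight power is nonnegative and the derivatives can be distributed so that at most one factor carries $D^{\kappa_1}$ (absorbing the $+2$ via the gap $\kappa_1-\kappa_0=2$) and the others sit in $L^\infty$ via \eqref{Eq:L_inf}, or (ii) the net weight power is negative but strictly above $-n/2$ so Hardy applies. Checking that these cases exhaust all terms produced by expanding $D^{\kappa_1}(|\cdot|^2 f_1 f_2 f_3)$ — in particular that one never needs $\||\cdot|^2 D^{\kappa_1}f\|_{L^2} \lesssim \|f\|_X$, which is genuinely false — is the crux; it works precisely because whenever the full weight $|\cdot|^2$ survives, the derivative count on that factor is at most $\kappa_1 - 2 = \kappa_0$, and $\| |\cdot|^2 D^{\kappa_0} f\|_{L^2}$ IS controlled: it has the same homogeneity as $\|D^{\kappa_1}f\|_{L^2}$ up to the compact-support/$\langle\cdot\rangle$ correction, which after the standard dyadic/Fourier argument of \eqref{Eq:L_inf} yields $\lesssim \|f\|_X$. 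Once this alignment is confirmed, the estimate follows by Hölder, Gagliardo–Nirenberg and Hardy exactly as in Lemmas \ref{Lem:EmbXH} and \ref{Lem:Banach_alg}.
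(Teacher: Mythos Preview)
Your proposal has a genuine gap. The crucial assertion at the end---that $\||\cdot|^2 D^{\kappa_0} f\|_{L^2(\R^n)} \lesssim \|f\|_X$ for a \emph{single} function $f$---is false. Take $f$ radial, supported in $\{|x|\sim R\}$ for large $R$, oscillating at unit frequency; then $\||\cdot|^2 D^{\kappa_0} f\|_{L^2}\sim R^2\|D^{\kappa_0}f\|_{L^2}$ while $\|f\|_X\sim \|D^{\kappa_0}f\|_{L^2}$. Scaling heuristics do not convert a positive spatial weight into extra derivatives. Likewise, your claim that ``whenever the full weight $|\cdot|^2$ survives, the derivative count on that factor is at most $\kappa_0$'' is wrong already at top order: the Leibniz expansion of $D^{\kappa_1}(|\cdot|^2 f_1f_2f_3)$ contains $|\cdot|^2 f_1 f_2\, D^{\kappa_1}f_3$, with full weight and full derivative on one factor.

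What makes the estimate work is that the growing weight can be \emph{distributed across the three factors of the product}, and this requires a weighted $L^\infty$ bound on individual factors---something your toolbox (unweighted $L^\infty$ from \eqref{Eq:L_inf}, H\"older, Gagliardo--Nirenberg, Hardy) does not supply. The paper splits $\R^n=\B^n\cup(\R^n\setminus\B^n)$: on $\B^n$ one has $|\cdot|^2\le 1$ and the algebra argument suffices; on $\R^n\setminus\B^n$ the key new ingredient is the radial Strauss-type inequality
\[
\||\cdot|^{\frac n2 - j}f\|_{L^\infty(\R^n\setminus\B^n)}\lesssim \|D^jf\|_{L^2(\R^n)},\qquad \tfrac12<j<\tfrac n2,
\]
which places two of the three factors in weighted $L^\infty$ (absorbing total positive weight up to $n-2\kappa_0\ge 2$) and the third in weighted $L^2$ via Hardy. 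For the term $|\cdot|^2 f_1 f_2\,D^{\kappa_1}f_3$ this reads $\||\cdot|^{\alpha}f_1\|_{L^\infty}\||\cdot|^{2-\alpha}f_2\|_{L^\infty}\|D^{\kappa_1}f_3\|_{L^2}$ with $\alpha=\frac n2-\kappa_0$. You are missing precisely this pointwise radial decay that trades the weight at infinity for derivatives on the \emph{other} factors.
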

\begin{proof}
	We prove Eq.~\eqref{Est:Cubic} for functions in $C^\infty_{c,\mathrm{rad}}(\R^n)$ only, since the full claim follows by density and Eq.~\eqref{Eq:XembH}. To establish Eq.~\eqref{Est:Cubic}, we have to control $D^k(|\cdot|^2f_1f_2f_3)$ in  $L^2(\R^n)$ for $k\in\{\kappa_0,\kappa_1\}$. To do this we treat small and large values of the variable separately. Namely, for $f \in C^\infty_{c,\mathrm{rad}}(\R^n)$ we write
	\begin{align}\label{Eq:Integralsplit}
	\|D^{k}( |\cdot|^2 f)\|_{L^2(\R^n)} = \|D^{k}( |\cdot|^2  f)\|_{L^2(\B^n)}   + \|D^{k}( |\cdot|^2  f)\|_{L^2(\R^n\setminus \B^n)}. 
	\end{align}
	By the Leibnitz rule \eqref{Eq:Leibnitz} and Hardy's inequality \eqref{Eq:Hardy} we get
	\begin{align*}
	\|D^{k}( |\cdot|^2 f)\|_{L^2(\B^n)}  \lesssim  \sum_{r=0}^{2}   \| |\cdot|^{2-r} D^{k-r} f \|_{L^2(\B^n)}  \lesssim  \sum_{r=0}^{2}   \| |\cdot|^{-r} D^{k-r} f \|_{L^2(\B^n)}     \lesssim \|f \|_{X} .
	\end{align*}
	Using this and the property \eqref{Eq:multip} we get
	\begin{align*}
	\| |\cdot|^2  f_1 f_2  f_3 \|_X  \lesssim   \prod_{j=1}^{3} \| f_j \|_X    +  \|D^{\kappa_0}(|\cdot|^2  f_1 f_2  f_3) \|_{L^2(\R^n\setminus \B^n)}  
	+  \|D^{\kappa_1}(|\cdot|^2  f_1 f_2  f_3) \|_{L^2(\R^n\setminus \B^n)}.
	\end{align*}
	Now, also by Leibniz's rule we have
	\begin{align}\label{Eq:cubicterm}
	\|D^{k}  (|\cdot|^2  f_1 f_2  f_3 )  \|_{L^2(\R^n\setminus \B^n)}    \lesssim \sum_{r=0}^{2} \sum_{j_0+j_1+j_2+j_3= k-r} \||\cdot|^{2-r-j_0} \prod_{i=1}^3 D^{j_i}f_i \|_{L^2(\R^n\setminus \B^n)} .
	\end{align}
	In order to control the terms on the right we use Hardy's inequality \eqref{Eq:Hardy} and a generalized version of Strauss' inequality for higher homogeneous Sobolev spaces, 
	\begin{equation}\label{Eq:Strauss}
	\| |\cdot|^{\frac{n}{2}-j} f \|_{L^\infty(\R^n \backslash \B^n)}
	\lesssim \| D^jf \|_{L^2(\R^n)}
	\end{equation}
	for $1/2 < j < n/2,$ see e.g. \cite{ChoOza09}, Proposition 1. In Eq.~\eqref{Eq:cubicterm} we assume $0 \leq j_1 \leq j_2 \leq j_3 $ and we define distance functions
	\begin{equation}
	\overline{d}(j):=
	\begin{cases}
	\kappa_0-j & \text{ if } j<\kappa_0, \\
	\kappa_1-j & \text{ otherwise,} 
	\end{cases}
	\quad \text{and} \quad
	\underline{d}(j):=
	\begin{cases}
	\kappa_0-j & \text{ if } j \leq \kappa_0, \\
	\kappa_1-j & \text{ otherwise.} 
	\end{cases}
	\end{equation}
	Then, $k\in \{\kappa_0, \kappa_1\}$ and $n \geq 7$ imply  $2-r-j_0 \leq n-\overline{d}(j_1)-\overline{d}(j_2)-\underline{d}(j_3)$ and we therefore have 
	\begin{align*}
	&\| |\cdot|^{2-r-j_0} \prod_{i=1}^{3}D^{j_i} f_i \|_{L^2(\R^n\setminus \B^n)}  
	\leq \prod_{i=1}^{2}\| |\cdot|^{\frac{n}{2}-\overline{d}(j_i)}D^{j_i}f_i \|_{L^{\infty}(\R^n\setminus \B^n)} \:
	\: \| |\cdot|^{-\underline{d}(j_3)} D^{j_3} f_3   \|_{L^2(\R^n\setminus \B^n)}.
	\end{align*}
	The desired estimate then follows from Eqs.~\eqref{Eq:Strauss} and \eqref{Eq:Hardy}.
\end{proof}

From Lemma \eqref{Lem:Cubic} and the fact that $W(|
\cdot|) \in X$ we see that the nonlinear term \eqref{Eq:Nonlinear_term}, i.e. 
\begin{align}\label{Eq:Nonlinerity}
\mc N(f)(x)= - 3(d-2) \big (1+ |x|^2  W(|x|)\big ) f(x)^2 -  (d-2) |x|^2 f(x) ^3 
\end{align}
is a well-defined operator on $X$. In the following, we denote by $\mc B_X$ the unit ball in $X$.

\begin{lemma}\label{Le:Nonlinearity}
We have
\begin{align}\label{Eq:NonlinEst}
\| \mc N(f) - \mc N(g)\|_X \lesssim (\|f\|_X + \|g \|_X) \|f-g\|_X
\end{align}
for all $f,g \in \mc B_X$. Furthermore, $\mc N$ is differentiable at every $f \in X$ with Fr\'echet derivative $D\mc N(f): X \to X$ bounded. Furthermore, the mapping $f  \mapsto D \mc N(f)$ is continuous.
\end{lemma}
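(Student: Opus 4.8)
The plan is to prove the three assertions of Lemma~\ref{Le:Nonlinearity} in order, leaning heavily on the fact that $X$ is a Banach algebra (Lemma~\ref{Lem:Banach_alg}), that the cubic term with the $|\cdot|^2$ weight is controlled by Lemma~\ref{Lem:Cubic}, and that $W(|\cdot|)\in X$ (which follows from Lemma~\ref{Le:finX} via the decay $|D^kW(x)|\lesssim\langle x\rangle^{-2-k}$). Write $P(x):=3(d-2)(1+|x|^2W(|x|))$, which belongs to $X$ and in particular has finite $X$-norm; then $\mc N(f)=-Pf^2-(d-2)|\cdot|^2f^3$.

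\medskip

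\textit{Lipschitz estimate.} First I would write the difference as a telescoping sum. For the quadratic part,
\[
Pf^2-Pg^2 = P(f+g)(f-g),
\]
so by Lemma~\ref{Lem:Banach_alg} (applied twice) $\|Pf^2-Pg^2\|_X\lesssim \|P\|_X\,\|f+g\|_X\,\|f-g\|_X\lesssim (\|f\|_X+\|g\|_X)\|f-g\|_X$, where the implied constant absorbs the fixed quantity $\|P\|_X$. For the cubic part, write
\[
|\cdot|^2(f^3-g^3)=|\cdot|^2(f-g)(f^2+fg+g^2)
\]
and apply Lemma~\ref{Lem:Cubic} to each of the three resulting terms $|\cdot|^2(f-g)f f$, $|\cdot|^2(f-g)fg$, $|\cdot|^2(f-g)gg$, obtaining a bound $\lesssim(\|f\|_X^2+\|f\|_X\|g\|_X+\|g\|_X^2)\|f-g\|_X$. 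On the unit ball $\mc B_X$ the quadratic factors in $\|f\|_X,\|g\|_X$ are dominated by $\|f\|_X+\|g\|_X$, so both contributions combine to give Eq.~\eqref{Eq:NonlinEst}.

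\medskip

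\textit{Fr\'echet differentiability.} The natural candidate for the derivative is the formal linearization
\[
D\mc N(f)h = -2Pfh - 3(d-2)|\cdot|^2 f^2 h.
\]
This is bounded $X\to X$ since $\|D\mc N(f)h\|_X\lesssim \|P\|_X\|f\|_X\|h\|_X + \||\cdot|^2 f^2 h\|_X\lesssim(\|f\|_X+\|f\|_X^2)\|h\|_X$ by Lemmas~\ref{Lem:Banach_alg} and~\ref{Lem:Cubic}. To verify it is the derivative, expand
\[
\mc N(f+h)-\mc N(f)-D\mc N(f)h = -Ph^2 - 3(d-2)|\cdot|^2\big(f h^2 + \tfrac13 h^3\big),
\]
and estimate the $X$-norm of the right side by $\lesssim \|P\|_X\|h\|_X^2 + (\|f\|_X+\|h\|_X)\|h\|_X^2 = o(\|h\|_X)$ as $\|h\|_X\to 0$, again using the Banach-algebra property and Lemma~\ref{Lem:Cubic}. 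Hence $\mc N$ is Fr\'echet differentiable at every $f\in X$ with the stated derivative.

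\medskip

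\textit{Continuity of $f\mapsto D\mc N(f)$.} For $f,g\in X$ and $h\in X$,
\[
(D\mc N(f)-D\mc N(g))h = -2P(f-g)h - 3(d-2)|\cdot|^2(f^2-g^2)h = -2P(f-g)h - 3(d-2)|\cdot|^2(f+g)(f-g)h,
\]
so that $\|(D\mc N(f)-D\mc N(g))h\|_X\lesssim \big(\|P\|_X + \|f\|_X+\|g\|_X\big)\|f-g\|_X\|h\|_X$, which shows $\|D\mc N(f)-D\mc N(g)\|_{X\to X}\to 0$ as $g\to f$ in $X$; in fact the map is locally Lipschitz.

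\medskip

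I do not anticipate a genuine obstacle here: every step reduces to an algebraic identity (difference of powers factored as a product) followed by an application of Lemma~\ref{Lem:Banach_alg} or Lemma~\ref{Lem:Cubic}, together with the membership $W(|\cdot|),\,|\cdot|^2W(|\cdot|)\in X$. The one point requiring a small remark is that Lemma~\ref{Lem:Cubic} is stated for three factors with the weight $|\cdot|^2$ out front, which is exactly the structure of the cubic terms above; the mixed terms such as $|\cdot|^2(f-g)fg$ fit the hypothesis verbatim. If anything is mildly delicate it is only bookkeeping: making sure the quadratic-in-norm prefactors are correctly collapsed to linear ones on the unit ball for the Lipschitz bound, whereas for differentiability and continuity of $D\mc N$ one keeps the genuine (locally bounded) dependence on $\|f\|_X$.
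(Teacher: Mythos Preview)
Your overall strategy---factor differences of powers and feed them to Lemma~\ref{Lem:Banach_alg} and Lemma~\ref{Lem:Cubic}---is exactly the paper's, and the differentiability and continuity parts go through verbatim once the issue below is fixed.

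The gap is the claim that $P(x)=3(d-2)(1+|x|^2W(|x|))$ lies in $X$. It does not: $|x|^2W(|x|)=|x|^2/(a|x|^2+b)\to 1/a\neq 0$ as $|x|\to\infty$, so $P$ does not decay at all, let alone like $\langle x\rangle^{-2}$, and the hypothesis of Lemma~\ref{Le:finX} fails. More directly, $X$ embeds in $L^\infty$ (Eq.~\eqref{Eq:L_inf}) via closure of $C^\infty_{c,\mathrm{rad}}$, hence every element of $X$ vanishes at infinity. Consequently every estimate in which you invoke the finite quantity $\|P\|_X$ (the Lipschitz bound for the quadratic piece, the bound on $D\mc N(f)$, and the continuity estimate) is unjustified as written. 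The same remark applies to your closing claim that $|\cdot|^2W(|\cdot|)\in X$.

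The fix is immediate: do not bundle the constant with $|\cdot|^2W$. Treat the three pieces of $\mc N$ separately. The pure quadratic $f^2-g^2=(f+g)(f-g)$ is handled by Lemma~\ref{Lem:Banach_alg} alone. The term $|\cdot|^2W(|\cdot|)(f^2-g^2)=|\cdot|^2\,W\,(f+g)(f-g)$ is handled by Lemma~\ref{Lem:Cubic} with the three factors $W,\,f+g,\,f-g$ (using $W\in X$, which \emph{is} correct). The cubic term is as you wrote. The same splitting repairs the derivative and continuity arguments; this is precisely how the paper organizes the estimate.
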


\begin{proof}

From Eq.~\eqref{Est:Cubic} we infer that for all $f,g \in X$,
 \[ \| \mc N(f)- \mc N(g) \|_X \leq \gamma(\|f\|_X,\|g\|_X) \| f- g \|_X \]
for a continuous function $\gamma: [0,\infty) \times [0,\infty) \to  [0,\infty)$ satisfying $\gamma(\|f\|_X,\|g\|_X) \lesssim \|f \|_X + \|g \|_X$ for all $f,g \in \mc B_X$. Furthermore, by using Eq.~\eqref{Est:Cubic} we simply conclude from the definition of the Fr\'echet derivative that
\[
D\mc N(f)g(x)=-3(d-2)f(x) \big (2+2|x|^2W(|x|)+|x|^2f(x) \big )g(x).
\]
Boundedness of $D\mc N(f)$  and continuity of $f \mapsto D\mc N(f)$ then easily follow.
 \end{proof}

\subsection{The initial data operator}
 To begin, we set $\mc R(v,T) := T v ( \sqrt{T} \cdot )$, and recall from Eq.~\eqref{Def:Initial_Data} that
\begin{align}\label{Def:InitialDataOperator}
 \mc U(v, T)  =  \mc R(v,T) + \mc R(W,T)-  \mc R(W,1)
\end{align}
In the rest of this section we prove basic mapping properties of the operator $\mc U$.

 \begin{lemma}\label{Le:InitialData}
The map $\mc U( v, T)$: $\mc B_X \times [\frac12, \frac32] \to X$ is continuous. Furthermore, if $\| v \|_X \leq \delta$ then
\[  \|\mc U(   v, T) \|_X \lesssim \delta \]
for all $T \in [1- \delta, 1 + \delta]$. 
\end{lemma}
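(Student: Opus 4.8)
The statement asserts two things about $\mc U(v,T) = \mc R(v,T) + \mc R(W,T) - \mc R(W,1)$: continuity as a map $\mc B_X \times [\tfrac12,\tfrac32] \to X$, and the bound $\|\mc U(v,T)\|_X \lesssim \delta$ when $\|v\|_X \le \delta$ and $|T-1| \le \delta$. The key observation is that $\mc U$ is built out of the single rescaling operator $\mc R(f,T) = T f(\sqrt{T}\,\cdot)$, so everything reduces to understanding how $\mc R$ acts on $X$ and, crucially, how the difference $\mc R(W,T) - \mc R(W,1)$ decays as $T \to 1$.

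First I would record the basic scaling behaviour of $\mc R$ on the homogeneous pieces of the $X$-norm. Since $D^k$ commutes with dilations up to a power of the scaling parameter, a change of variables gives $\|D^k \mc R(f,T)\|_{L^2(\R^n)} = T^{1 + k/2 - n/4}\|D^k f\|_{L^2(\R^n)}$, hence $\|\mc R(f,T)\|_X \simeq \|f\|_X$ uniformly for $T$ in the compact interval $[\tfrac12,\tfrac32]$, with constants depending only on $n$. This immediately yields $\|\mc R(v,T)\|_X \lesssim \|v\|_X \le \delta$. For the remaining term $\mc R(W,T) - \mc R(W,1)$ one cannot just use the triangle inequality (that would only give a constant bound, not $O(\delta)$); instead I would exploit smoothness in $T$. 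Writing $g(x,T) := T W(\sqrt T |x|)$, one has $\mc R(W,T) - \mc R(W,1) = \int_1^T \partial_s g(\cdot,s)\, ds$, and $\partial_s g(x,s) = W(\sqrt s |x|) + \tfrac{s}{2\sqrt s}|x| W'(\sqrt s|x|) \cdot \tfrac{1}{\sqrt s}\cdot\sqrt s$ — more cleanly, $\partial_s g(x,s) = W(\sqrt s|x|) + \tfrac12 \sqrt s |x| W'(\sqrt s |x|)$. Since $W(\rho) = (a\rho^2+b)^{-1}$ is smooth with $W$ and all its radial derivatives decaying like $\langle \rho\rangle^{-2-k}$, the function $x \mapsto \partial_s g(x,s)$ satisfies the hypotheses of Lemma \ref{Le:finX} uniformly for $s \in [\tfrac12,\tfrac32]$, so $\|\partial_s g(\cdot,s)\|_X \le C$ uniformly in $s$. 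Using the Minkowski/Bochner inequality for the $X$-valued integral then gives $\|\mc R(W,T)-\mc R(W,1)\|_X \le \int_{\min(1,T)}^{\max(1,T)} \|\partial_s g(\cdot,s)\|_X\, ds \le C|T-1| \le C\delta$. Combining the two estimates proves the quantitative bound.

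For continuity of $\mc U$ on $\mc B_X \times [\tfrac12,\tfrac32]$: the map $(W,T)$-dependent part $T \mapsto \mc R(W,T)$ is continuous into $X$ by the same integral estimate (it is in fact $C^1$ in $T$). For the $v$-dependent part, continuity in the $T$-variable follows once one shows $T \mapsto \mc R(f,T)$ is strongly continuous on $X$ for fixed $f$: by density of $C^\infty_{c,\mathrm{rad}}(\R^n)$ in $X$ (using that $\|\mc R(f,T)\|_X \lesssim \|f\|_X$ uniformly to pass to the limit), it suffices to check this for $f \in C^\infty_{c,\mathrm{rad}}$, where a dominated-convergence argument on each $\|D^k(\mc R(f,T) - \mc R(f,T_0))\|_{L^2}$ — exactly as in the proof of strong continuity of $S_0(\tau)$ on $X$ earlier in the paper — does the job. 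Continuity in the $v$-variable is immediate from linearity and the uniform bound $\|\mc R(v,T) - \mc R(\tilde v, T)\|_X = \|\mc R(v - \tilde v, T)\|_X \lesssim \|v-\tilde v\|_X$. Joint continuity then follows by combining the two, e.g. $\|\mc U(v,T) - \mc U(\tilde v, \tilde T)\|_X \le \|\mc R(v-\tilde v, T)\|_X + \|\mc R(\tilde v, T) - \mc R(\tilde v,\tilde T)\|_X + \|\mc R(W,T) - \mc R(W,\tilde T)\|_X$, each term small.

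**Main obstacle.** The only real subtlety is getting the $O(\delta)$ (rather than $O(1)$) bound on $\mc R(W,T) - \mc R(W,1)$; this forces the differentiate-in-$T$-and-integrate argument, and one must check carefully that $\partial_s[s W(\sqrt s|\cdot|)]$ and its radial derivatives up to order $\kappa_1$ have the decay $\langle x\rangle^{-2-k}$ required by Lemma \ref{Le:finX}, uniformly in $s \in [\tfrac12,\tfrac32]$, so that the $X$-valued integral converges with a uniform bound on the integrand. Given the explicit rational form of $W$ this is a routine (if slightly tedious) verification. Everything else is bookkeeping with scaling identities and density arguments already used elsewhere in the paper.
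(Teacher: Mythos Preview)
Your proposal is correct and follows essentially the same approach as the paper: both arguments use the scaling identity for $\mc R(\cdot,T)$ on the homogeneous Sobolev norms to get Lipschitz continuity in $v$, the fundamental theorem of calculus in the $T$-variable together with the decay of $W$ (via Lemma~\ref{Le:finX}) to obtain the Lipschitz bound $\|\mc R(W,T)-\mc R(W,1)\|_X \lesssim |T-1|$, and a density-plus-uniform-boundedness argument to upgrade continuity in $T$ from $C^\infty_{c,\mathrm{rad}}$ to general $v\in X$. The only cosmetic difference is that you differentiate the full expression $sW(\sqrt{s}\,\cdot)$ and integrate in $s$, while the paper writes the difference $v(\sqrt{T_1}\rho)-v(\sqrt{T_2}\rho)$ as an integral over a line segment; these are equivalent formulations of the same idea.
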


\begin{proof}
First, note that for all $v_1,v_2 \in X$ and all $T \in [\frac12, \frac32]$ 
\begin{align*}
\| \mc R(v_1,T) - \mc R(v_2,T) \|_X \lesssim \|v_1  - v_2 \|_X
\end{align*}
i.e.~the map $\mc U(\cdot, T): X \to X$ is Lipschitz continuous. Next, for $v \in C^{\infty}_{\mathrm{rad}}(\R^n)$, and $T_1,T_2 \in [\frac12, \frac32]$ , the fundamental theorem of calculus implies that
\begin{align*}
v(\sqrt{T_1} \rho) - v(\sqrt{T_2} \rho) = (\sqrt{T_1}- \sqrt{T_2}) \int_0^{1} \rho v'(\rho(\lambda_1 - \lambda_2) s + r \lambda_2) ds.
\end{align*}
Now, the integral term can be controlled in $X$ provided that $v$ and its derivatives up to order $\kappa_1$ have sufficient decay at infinity. This in particular shows that 
\begin{align*}
\|  \mc R(W,T_1) -  \mc R(W,T_2) \|_X \lesssim |T_1 -  T_2|,
\end{align*}
i.e., $T \mapsto \mc R(W,T)$ is Lipschitz continuous. For general $v \in X$, this is not the case. However, for given $\tilde \varepsilon> 0$ we find a $\tilde v \in C_{\mathrm{c,rad}}^\infty(\R^n)$ with $\| v - \tilde v \|_X <  \tilde \varepsilon$ such that 
\begin{align*}
\| v(\sqrt{T_1} \cdot) - v(\sqrt{T_2} \cdot)  \|_X \leq  & \| v(\sqrt{T_1} \cdot) - \tilde v(\sqrt{T_1} \cdot)  \|_X + \| \tilde v(\sqrt{T_1} \cdot) - \tilde v(\sqrt{T_2} \cdot)  \|_X  \\
& + \| \tilde v(\sqrt{T_2} \cdot) -  v(\sqrt{T_2} \cdot)  \|_X  \lesssim  \tilde \varepsilon +   |T_1 -  T_2|.
\end{align*}
Hence, for given $(v_1,T_1) \in \mc B_X \times [\frac12,\frac32]$ and  $\varepsilon > 0$ let $(v_2,T_2)$ be such that $\|v_1 - v_2 \|_X  + |T_1 - T_2| < \delta$ for $\delta > 0$. Furthermore, chose  $\tilde v_1 \in   C_{\mathrm{c,rad}}^\infty(\R^n) $ such that  $\| v_1 - \tilde v \|_X < \delta$, then by the above considerations
\begin{align*}
\|  & \mc U(v_1,T_1)  - \mc U(v_2,T_2) \|_X  \leq  \| \mc R(v_1,T_1) -  \mc R(v_1,T_2) \|_X  \\
& +  \| \mc R(v_1,T_2) - \mc R(v_2,T_2) \|_X +  \|  \mc R(W,T_1) -  \mc R(W,T_2)\|_X    \lesssim  \delta.
\end{align*} 
This implies the claim provided that $\delta$ is chosen sufficiently small.  Finally, for $v \in X$, $\| v\|_X \leq \delta$
we get 
\begin{align*}
 \| \mc U(v, T ) \|_X &\lesssim \| v \|_X + |T - 1| \lesssim \delta 
\end{align*}
for all $T\in [1-\delta,1+\delta]$.
\end{proof}

\subsection{The nonlinear time evolution}

We consider the integral version of Eq.~\eqref{Eq:YM_AbstractPerturbation} by using the Duhamel formula and the above defined operators. 
 \begin{align}\label{Eq:IntegralEq}
 \Phi(\tau) = S(\tau) \mc U( v, T )  + \int_0^{\tau} S(\tau - \tau') \mc N ( \Phi(\tau')) d \tau'.
 \end{align}
Throughout this section, $\omega$ stands for the parameter given by Proposition \ref{Prop:DecayX}. The aim of this section is to prove the following result. 

\begin{theorem}\label{Th:MainSim}
Let $M > 0$ be sufficiently large and $\delta > 0$ sufficiently small. For every $v \in X$ with $\| v \|_X \leq \frac{\delta}{M^2}$,
there exists  a $T = T_{v} \in [1- \frac{\delta}{M}, 1 + \frac{\delta}{M}]$ and a unique function $\Phi  \in C([0,\infty), X)$ that satisfies
Eq.~\eqref{Eq:IntegralEq} for all $\tau \geq 0$. Furthermore, 
\[ \| \Phi (\tau) \|_X \leq \delta e^{-\omega \tau}, \quad \forall \tau \geq 0. \] 
\end{theorem}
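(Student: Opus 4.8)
The plan is to recast Eq.~\eqref{Eq:IntegralEq} as a fixed point problem on a suitable complete metric space and use the Banach fixed point theorem, with the blowup time $T$ tuned via a topological/Lyapunov--Schmidt-type argument so as to kill the growing mode. Concretely, for $\delta>0$ small and $M>0$ large set
\[
\mc Y := \{ \Phi \in C([0,\infty),X) : \|\Phi\|_{\mc Y} := \sup_{\tau\geq 0} e^{\omega\tau}\|\Phi(\tau)\|_X \leq \delta \},
\]
a closed subset of the Banach space $(C_b([0,\infty);e^{\omega\tau}X),\|\cdot\|_{\mc Y})$, hence complete. For $\Phi\in\mc Y$ and $v\in X$ with $\|v\|_X\leq\delta/M^2$, $T\in[1-\delta/M,1+\delta/M]$, define
\[
\mb K(\Phi,v,T)(\tau) := S(\tau)\mc U(v,T) + \int_0^\tau S(\tau-\tau')\mc N(\Phi(\tau'))\,d\tau'.
\]
The first obstruction is that $S(\tau)$ is only exponentially decaying on $\ker\mc P_X$, so $\mb K$ does not map $\mc Y$ into itself for arbitrary data. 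The standard remedy is to subtract off the unstable part: one replaces $\mb K$ by the \emph{corrected} map
\[
\mb K_c(\Phi,v,T)(\tau) := \mb K(\Phi,v,T)(\tau) - e^\tau \mc P_X\!\left(\mc U(v,T) + \int_0^\infty e^{-\tau'}\mc N(\Phi(\tau'))\,d\tau'\right),
\]
which by construction satisfies $\mc P_X \mb K_c(\Phi,v,T)(\tau)=0$ for all $\tau$ (using $S(\tau)\mc P_X = e^\tau\mc P_X$, commutativity of $\mc P_X$ with $S(\tau)$, and that $\mc N(\Phi(\tau'))\in X$ with exponentially small norm so the improper integral converges in $X$).

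The core estimates are then routine given the machinery already assembled. For the linear term, Proposition~\ref{Prop:DecayX} gives $\|S(\tau)(1-\mc P)\mc U(v,T)\|_X \lesssim e^{-\omega\tau}\|\mc U(v,T)\|_X \lesssim e^{-\omega\tau}\,\delta/M$ by Lemma~\ref{Le:InitialData} (with $\|v\|_X\leq\delta/M^2$ and $|T-1|\leq\delta/M$). For the Duhamel integral, split at $\tau'=\tau$ and apply Proposition~\ref{Prop:DecayX} on $\ker\mc P$ together with the quadratic bound $\|\mc N(\Phi(\tau'))\|_X \lesssim \|\Phi(\tau')\|_X^2 \leq \delta^2 e^{-2\omega\tau'}$ from Lemma~\ref{Le:Nonlinearity}; the convolution of $e^{-\omega(\tau-\tau')}$ with $e^{-2\omega\tau'}$ produces $\lesssim e^{-\omega\tau}\delta^2$. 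The projected (unstable) correction term is controlled in $X$ using $\|\mc P_X h\|_X \lesssim \|h\|_{L^2_\sigma} \lesssim \|h\|_X$ and the same bounds, giving a contribution $\lesssim e^\tau(\delta/M + \delta^2)$ — which is \emph{not} small unless the bracket vanishes. Summing, $\|\mb K_c(\Phi,v,T)\|_{\mc Y} \lesssim \delta/M + \delta^2 \leq \delta$ for $M$ large and $\delta$ small, \emph{plus} that bad $e^\tau$ term. The contraction estimate is identical: for $\Phi_1,\Phi_2\in\mc Y$, $\|\mc N(\Phi_1(\tau'))-\mc N(\Phi_2(\tau'))\|_X \lesssim (\|\Phi_1(\tau')\|_X+\|\Phi_2(\tau')\|_X)\|\Phi_1(\tau')-\Phi_2(\tau')\|_X \lesssim \delta e^{-2\omega\tau'}\|\Phi_1-\Phi_2\|_{\mc Y}$, yielding $\|\mb K_c(\Phi_1,v,T)-\mb K_c(\Phi_2,v,T)\|_{\mc Y}\lesssim \delta\|\Phi_1-\Phi_2\|_{\mc Y}\leq \tfrac12\|\Phi_1-\Phi_2\|_{\mc Y}$.

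The main obstacle, and the crux of the argument, is disposing of the growing term: for fixed $v$, one must find $T=T_v$ such that
\[
F(v,T) := \Big(\mc U(v,T) + \int_0^\infty e^{-\tau'}\mc N(\Phi_{v,T}(\tau'))\,d\tau'\,\Big|\,\mb g\Big)_{L^2_\sigma(\R^n)} = 0,
\]
where $\Phi_{v,T}$ is the fixed point of $\mb K_c(\cdot,v,T)$. I would proceed as follows: first apply Banach's theorem to get, for each admissible $(v,T)$, a unique $\Phi_{v,T}\in\mc Y$ solving $\Phi = \mb K_c(\Phi,v,T)$, and check (by a further contraction-type estimate, using the Lipschitz dependence of $\mc U$ on $T$ from Lemma~\ref{Le:InitialData} and of $\mc N$/its Fréchet derivative on $\Phi$) that $T\mapsto\Phi_{v,T}$ is continuous in $\mc Y$, hence $T\mapsto F(v,T)$ is continuous. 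Then one computes the leading $T$-dependence: writing $\mc U(v,T) = \mc R(v,T) + [\mc R(W,T)-\mc R(W,1)]$, the term $\mc R(W,T)-\mc R(W,1)$ is, to first order in $T-1$, a nonzero multiple of $\partial_\lambda|_{\lambda=1}\lambda^2 W(\lambda|\cdot|)$, which has nonzero component along $\mb g$ (indeed $\mb g$ \emph{is} the $\lambda=1$ eigenfunction arising precisely from this scaling/time-translation direction). Thus $F(v,\cdot)$ behaves like $c(T-1) + O(\delta/M^2) + O(\delta^2)$ with $c\neq 0$, so $F(v,1-\delta/M)$ and $F(v,1+\delta/M)$ have opposite signs for $M$ large; the intermediate value theorem supplies $T_v\in[1-\delta/M,1+\delta/M]$ with $F(v,T_v)=0$. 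For this $T_v$ the correction term in $\mb K_c$ vanishes, so $\Phi := \Phi_{v,T_v}$ solves the original Eq.~\eqref{Eq:IntegralEq} and satisfies $\|\Phi(\tau)\|_X\leq\delta e^{-\omega\tau}$. Uniqueness of $\Phi$ in $C([0,\infty),X)$ (for the given $T_v$) follows from the contraction property. The technical points requiring care are: convergence and continuity of the improper $\tau'$-integrals in $X$ (clear from the exponential weight and Lemma~\ref{Le:Nonlinearity}); the explicit sign/nondegeneracy computation for $\partial_T$ of the $W$-rescaling paired against $\mb g$; and propagating the smallness $\|v\|_X\leq\delta/M^2$ correctly through the two nested smallness parameters $\delta$ and $1/M$ so that the bad term is genuinely lower order before it is annihilated.
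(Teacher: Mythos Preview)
Your approach is essentially the same as the paper's: define a corrected Duhamel map (the paper calls it $K$, you call it $\mb K_c$), run Banach's fixed point theorem on the exponentially weighted space $\mc X_\delta$ for each $T$, and then select $T$ so that the correction vanishes. The paper frames the last step as a Brouwer fixed point argument ($T=F(T)+1$ with $F$ mapping the interval to itself), while you use the intermediate value theorem; in one dimension these are equivalent, and your identification of the leading term via $\partial_T \mc R(W,T)|_{T=1}$ being a nonzero multiple of $\mb g$ matches the paper exactly.

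There is one genuine slip in your writeup. The claim that $\mc P_X \mb K_c(\Phi,v,T)(\tau)=0$ for all $\tau$ is false: carrying out the computation using $S(\tau)\mc P_X=e^\tau\mc P_X$ gives
\[
\mc P_X \mb K_c(\Phi,v,T)(\tau) = -\int_\tau^\infty e^{-(\tau'-\tau)}\mc P_X\mc N(\Phi(\tau'))\,d\tau',
\]
which is nonzero but harmlessly of order $\delta^2 e^{-2\omega\tau}$. This is how the paper handles it (Lemma~\ref{Le:ModIntegralOP}), and it resolves the confusion in your paragraph about the ``bad $e^\tau$ term'': once the correction is subtracted there is no such term in $\mb K_c$, and the self-map and contraction estimates go through cleanly without any residual growing piece.

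A second minor point: uniqueness in all of $C([0,\infty),X)$ does not follow from the contraction on $\mc X_\delta$ alone (that gives uniqueness only among solutions obeying the decay bound). The paper invokes a separate continuation/Gronwall-type argument from \cite{DonSch12} for this; you should flag that an additional step is needed there.
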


First, we introduce the Banach space
\begin{align}
\mc X := \{ \Phi \in C([0,\infty), X) : \| \Phi \|_{\mc X} := \sup_{\tau \geq 0}  e^{\omega \tau } \|\Phi(\tau) \|_X   < \infty \}
\end{align}
and set $\mc X_{\delta} := \{ \Phi \in \mc X:  \| \Phi \|_{\mc X} \leq \delta \}$. To control the behavior of the semigroup on the unstable subspace $\mc P X$, we define the correction term
\begin{align}\label{Def:Correction}
\mc C(\Phi, u) := \mc P  u + \int_0^{\infty} e^{-\tau'} \mc P \mc N(\Phi(\tau')) d\tau'
\end{align}
and set 
\begin{align}\label{Def:ModInt}
K(\Phi,u)(\tau) := S(\tau)u + 	 \int_0^{\tau} S(\tau - \tau') \mc N ( \Phi(\tau')) d \tau' - e^{\tau} \mc C(\Phi, u).
\end{align}

\begin{lemma}\label{Le:ModIntegralOP}
There is $c > 0$ such that for all $\delta > 0$ sufficiently small and all $u \in X$ with $\|u \|_X \leq \frac{\delta}{c}$, the operator $K(\cdot,u)$ maps the ball $\mc X_{\delta}$ into itself. Furthermore 
\[ \| K(\Phi,u) - K(\Psi,u) \|_{\mc X} \leq \frac{1}{2} \| \Phi - \Psi \|_\mc X \]
for all $\Phi, \Psi \in \mc X_{\delta}$ and all $u \in X$. 
\end{lemma}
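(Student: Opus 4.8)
The plan is to show that $K(\cdot,u)$ is a contraction on $\mc X_\delta$ by a direct estimate using the decomposition of the semigroup flow into its stable and unstable parts. The point of the correction term $\mc C(\Phi,u)$ and the subtraction of $e^\tau \mc C(\Phi,u)$ in \eqref{Def:ModInt} is precisely to annihilate the component of the solution that lives on the one-dimensional unstable eigenspace $\mc P X$, so that on that subspace $K(\Phi,u)(\tau)$ carries no growing $e^\tau$ contribution, while on the stable subspace $(1-\mc P)X$ we have exponential decay at rate $\omega$ by Proposition \ref{Prop:DecayX}. Concretely, I would apply $\mc P$ and $(1-\mc P)$ to \eqref{Def:ModInt} separately.

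\textbf{Key steps.} First, using $S(\tau)\mc P f = e^\tau \mc P f$ (Corollary \ref{Cor:Decay_S}) and the definition \eqref{Def:Correction}, compute
\[
\mc P K(\Phi,u)(\tau) = e^\tau \mc P u + \int_0^\tau e^{\tau-\tau'} \mc P \mc N(\Phi(\tau'))\,d\tau' - e^\tau\Big(\mc P u + \int_0^\infty e^{-\tau'}\mc P\mc N(\Phi(\tau'))\,d\tau'\Big) = -\int_\tau^\infty e^{\tau-\tau'}\mc P\mc N(\Phi(\tau'))\,d\tau'.
\]
Since $\mc N$ is quadratic-cubic, Lemma \ref{Le:Nonlinearity} with $g=0$ gives $\|\mc N(\Phi(\tau'))\|_X \lesssim \|\Phi(\tau')\|_X^2 \leq \delta^2 e^{-2\omega\tau'}$ for $\Phi\in\mc X_\delta$ (shrinking $\delta$ so that $\mc X_\delta$-elements lie in $\mc B_X$); the boundedness of $\mc P$ on $X$ then bounds $\|\mc P K(\Phi,u)(\tau)\|_X \lesssim \delta^2 e^{-\omega\tau}$ after carrying out the $\tau'$-integral (note $e^{\tau-\tau'}\cdot e^{-2\omega\tau'}$ integrates to something $\lesssim e^{-\omega\tau}$ since $1+2\omega>0$, giving the factor $e^{(1+2\omega)\tau} e^{-(1+2\omega)\tau}$... more carefully, $\int_\tau^\infty e^{\tau-\tau'}e^{-2\omega\tau'}d\tau' = \frac{e^{-2\omega\tau}}{1+2\omega}e^{\tau}\cdot e^{-\tau}$, i.e. $\lesssim e^{-2\omega\tau}\leq e^{-\omega\tau}$). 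Second, for the stable part, $(1-\mc P)K(\Phi,u)(\tau) = S(\tau)(1-\mc P)u + \int_0^\tau S(\tau-\tau')(1-\mc P)\mc N(\Phi(\tau'))\,d\tau'$, and Proposition \ref{Prop:DecayX} together with the same nonlinearity bound gives
\[
\|(1-\mc P)K(\Phi,u)(\tau)\|_X \lesssim e^{-\omega\tau}\|u\|_X + \int_0^\tau e^{-\omega(\tau-\tau')}\delta^2 e^{-2\omega\tau'}\,d\tau' \lesssim e^{-\omega\tau}\big(\|u\|_X + \delta^2\big).
\]
Combining, $\|K(\Phi,u)\|_{\mc X} \leq C(\|u\|_X + \delta^2) \leq C\delta/c + C\delta^2 \leq \delta$ provided $c$ is chosen large and $\delta$ small, which is the self-mapping property.

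\textbf{Contraction.} For the Lipschitz bound, note $K(\Phi,u)-K(\Psi,u)$ has the $S(\tau)u$ terms cancel, leaving only nonlinear contributions; the same $\mc P$/$(1-\mc P)$ split applies verbatim, and Lemma \ref{Le:Nonlinearity} gives $\|\mc N(\Phi(\tau'))-\mc N(\Psi(\tau'))\|_X \lesssim (\|\Phi(\tau')\|_X+\|\Psi(\tau')\|_X)\|\Phi(\tau')-\Psi(\tau')\|_X \lesssim \delta e^{-2\omega\tau'}\|\Phi-\Psi\|_{\mc X}$ for $\Phi,\Psi\in\mc X_\delta$. Running both integrals as above yields $\|K(\Phi,u)-K(\Psi,u)\|_{\mc X} \leq C\delta\|\Phi-\Psi\|_{\mc X} \leq \tfrac12\|\Phi-\Psi\|_{\mc X}$ once $\delta$ is small enough. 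The main obstacle is essentially bookkeeping: making sure the constants are genuinely uniform (the implicit constants in Lemma \ref{Le:Nonlinearity} and in the boundedness of $\mc P$ on $X$ are absolute), that $\mc X_\delta\subseteq$ the domain where the nonlinearity estimates hold (i.e. $\delta\leq 1$ so elements land in $\mc B_X$), and that the choice of $c$ in the hypothesis $\|u\|_X\leq\delta/c$ is made after fixing the constant $C$ above — there is no analytic difficulty beyond that.
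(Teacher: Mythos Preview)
Your proof is correct and follows essentially the same route as the paper: decompose $K(\Phi,u)(\tau)$ via $\mc P$ and $1-\mc P$, use the explicit cancellation on the unstable subspace to reduce to the tail integral $-\int_\tau^\infty e^{\tau-\tau'}\mc P\mc N(\Phi(\tau'))\,d\tau'$, and apply Proposition \ref{Prop:DecayX} and Lemma \ref{Le:Nonlinearity} on the stable part. The only cosmetic difference is that the paper records the slightly weaker bound $\|\mc N(\Phi(\tau))-\mc N(\Psi(\tau))\|_X \lesssim \delta e^{-\omega\tau}\|\Phi-\Psi\|_{\mc X}$ (dropping one decay factor), which already suffices for the contraction.
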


\begin{proof}
We have 
\[ \mc P K(\Phi,u)(\tau) =  - \int_{\tau}^{\infty} e^{-(\tau' - \tau)} \mc P \mc N(\Phi(\tau')) d\tau' \]
and 
\[ (1- \mc P) K(\Phi,u)(\tau) = S(\tau) (1- \mc P) u +  \int_{0}^{\tau} S(\tau - \tau')(1- \mc P )\mc N(\Phi(\tau')) d\tau' \]
From this it is straightforward to see that Lemma \ref{Le:Nonlinearity} implies 
\[  \| \mc P K(\Phi,u)(\tau) \|_X \lesssim e^{-2\omega \tau}  \delta^2, \]
and 
\[  \|( 1- \mc P) K(\Phi,u)(\tau) \|_X \lesssim e^{-\omega \tau} ( \tfrac{\delta}{c} + \delta^2) \]
for all $\Phi \in \mc X_{\delta}$ and $u \in X$ satisfying $\|u \|_X \leq \frac{\delta}{c}$. This implies the first claim. For the Lipschitz estimate we use that
\[  \| \mc N(\Phi(\tau))  -  \mc N(\Psi(\tau)) \|_X \lesssim \delta e^{-\omega \tau } \| \Phi - \Psi \|_{\mc X}  \]
by Lemma \ref{Le:Nonlinearity} to obtain
\[ \|K(\Phi,u)(\tau)-  K(\Psi,u)(\tau) \|_X \lesssim \delta e^{-\omega \tau }  \| \Phi - \Psi \|_{\mc X}  \]
which yields the result provided that $\delta >0$ is chosen sufficiently small. 
\end{proof}

\subsection{Proof of Theorem \ref{Th:MainSim}}
Let $v \in X$ with $\| v \|_X \leq \frac{\delta}{M^2}$.  By Lemma \ref{Le:InitialData} we can chose $M > 0$ large enough to guarantee that 
\[ \| \mc U(v,T) \|_X \leq \tfrac{\delta}{c} \]
for all $T \in  I_{\delta, M} := [1 -\frac{\delta}{M}, 1 + \frac{\delta}{M}]$, 
where $c > 0$ is the constant from Lemma \ref{Le:ModIntegralOP}. An application of the Banach fixed point theorem implies that for every $T \in  I_{\delta, M}$  there exists a unique solution $\Phi_{T} \in \mc X_{\delta}$ to the equation 
\begin{align}
\Phi(\tau) = K (\Phi, \mc U(v,T))(\tau), \quad \tau \geq 0.
\end{align}
Furthermore, by Lemma \ref{Le:InitialData} and continuity of the solution map, the map $T \mapsto \Phi_{T}$ is continuous. To prove Theorem \ref{Th:MainSim}, we show that there exists a $T = T(v)$ such that $\mc C(\Phi_{T(v)},  \mc U(v,T) ) = 0$. In fact, it is enough to show that
\begin{align}\label{Eq:VanishingCorr}
\left (\mc C(\Phi_{T(v)},  \mc U(v,T) )| \mb g \right )_{L^2_\sigma(\R^n)} = 0.
\end{align}
For this, we use that $\partial_T \mc R(W,T)|_{T=1} = \alpha \mb g$ for some $\alpha \in \R$ to write 
\[  \mc U(v,T)  = \mc R(v,T) + \alpha (T-1) \mb g + (T-1)^2 R(T,\cdot) \]
by Taylor expansion, where the error term depends continuously on $T$ and satisfies $\|R(T,\cdot)\|_X \lesssim 1$ for all $T \in  I_{\delta, M}$.
Thus,
\[ (\mc P  \mc U(v,T)|  \mb g  )_{L^2_\sigma(\R^n)} = C (1 - T) + f(T), \]
where $|f(T)| \lesssim \frac{\delta}{M^2}  + \delta^2 $.  By using the bounds of Lemma \ref{Le:Nonlinearity},  Eq.~\eqref{Eq:VanishingCorr} can be written as 
$T  = F(T) + 1$ for a continuous function $F$ that satisfies $|F(T)| \lesssim  \frac{\delta}{M^2}  + \delta^2$. Hence, by choosing $M > 0$ sufficiently large and $\delta = \delta(M) > 0$ sufficiently small 
 we obtain  $|F(T)| \leq \frac{\delta}{M}$, hence $T \mapsto F(T) +1:  I_{\delta, M} \to I_{\delta, M}$. An application of Brower's fixed point argument shows that there is a $T \in  I_{\delta, M}$ such that Eq.~\eqref{Eq:VanishingCorr} is satisfied and that the corresponding $\Phi_T \in \mc X_{\delta}$ solves Eq.~\eqref{Eq:IntegralEq}. For the proof of the uniqueness of the solution $\Phi_T$ in $C([0,\infty),X)$ we direct the reader to e.g. \cite{DonSch12}, Theorem 4.11.

\subsection{Theorem \ref{Th:MainSim} implies Theorem \ref{Thm:Main}}

Fix $5 \leq d \leq 9$ and set $n = d+2$. Let $\delta > 0$ and $M > 0$ be such that Theorem \ref{Th:MainSim} holds and set $\delta' := \frac{\delta}{M}$.
Let $v \in  C^{\infty}_{c,{\mathrm{rad}}}(\R^{n})$ such that $\|v \|_X \leq \frac{\delta'}{M}$.  Then there exists a function $\Phi \in \mc X_{\delta}$ and a $T \in [1-\delta, 1+\delta]$ such that \eqref{Eq:IntegralEq} is satisfied for all $\tau \geq 0$. Our assumption on the data imply that $\mc U(v, T) \in \mc D(\mc L_X)$ and thus, in view of differentiability properties of the operator $\mc N$ (see Lemma \ref{Le:Nonlinearity}), $\Phi \in C^1((0,\infty), X)$, $\Phi(\tau) \in  \mc D(\mc L_X)$ for all $\tau \geq 0$ and 
\begin{equation}\label{Eq:Phi(tau)}
 \partial_{\tau} \Phi(\tau) = \mc L_X \Phi(\tau) + \mc N(\Phi(\tau)) \quad \forall \tau>0,
 \end{equation}
 see e.g. \cite{Paz83}, Theorem 6.1.5. 
To show smoothness of $\Phi(\tau)$ we do the following. First, remember that $\mc L_X=\mc L_0|_X + L'$ where $L'$ is a bounded operator on $X$. Hence, since $\Phi$ satisfies Eq.~\eqref{Eq:Phi(tau)} we have
\begin{equation}
	\Phi(\tau)=S_0(\tau)\Phi(0)+\int_{0}^{\tau}S_0(\tau-s) (L' \Phi(s) + \mc N(\Phi(s)))ds
\end{equation}  
and $\Phi(\tau) \in C^{\infty}(\R^n)$ follows from the smoothing properties of $S_0(\tau)$, which are obvious from its explicit form, see Lemma \ref{Le:L0sa}. By setting $\psi(\tau,\cdot) := W(|\cdot|) + \Phi(\tau)$ we obtain a solution to the initial value problem \eqref{Eq:SelfSim}-\eqref{Eq:SelfSimData}. Finally, by translating back to the original coordinates we see that
\begin{align*}
u(r,t) =  \frac{1}{T-t} \psi( \tfrac{r}{\sqrt{T-t}}, - \log(T-t) + \log T) 
\end{align*}
is the unique classical solution to  Eq.~\eqref{Eq:EquivarEq} which evolves from the initial data \eqref{Eq:InitialData} and belongs to $C([0,\infty),X) \cap C^1((0,\infty),X)$.  From Eq.~\eqref{Eq:ScalinguT}
and
\begin{align*}
  (T-t)^{ \frac12 (\kappa_1 + 2 - \frac{n}{2} )} & \| u(|\cdot|,t)  -u_{T}(|\cdot|,t) \|_X  \\& =   (T-t)^{\frac{\kappa_1}{2}  - \frac{n}{4}}   \|\Phi(- \log(T-t) + \log T)(\tfrac{|\cdot|}{\sqrt{T-t}}) \|_X  \\& \lesssim \|\Phi(- \log(T-t) + \log T)) \|_X  \leq \delta (T-t)^{\omega}
\end{align*}
we derive Eq.~\eqref{Eq:X_estimate}. Convergence in $L^{\infty}$ follows from Eq.~\eqref{Eq:L_inf}, namely
\begin{multline*}
\frac{\| u(\cdot,t)  -  u_T(\cdot,t) \|_{L^{\infty}(\R^+)}}{\| u_T(\cdot,t) \|_{L^{\infty}(\R^+)}} \simeq  (T-t)\| u(\cdot,t)    -  u_T(\cdot,t) \|_{L^{\infty}(\R^+)} \\
 \simeq  \|\Phi(- \log(T-t) + \log T)) \|_{L^{\infty}(\R^n)}  
 \lesssim  \|\Phi(- \log(T-t) + \log T)) \|_X  \leq \delta (T-t)^{\omega}.
\end{multline*}
\appendix

\section{A variant of GGMT bound}\label{Sec:GGMT}

In this section, we establish a criterion for the absence of negative spectrum for a class of self-adjoint operators. Our result is in the spirit of the so-called GGM(T) bounds for the number of bound states of higher angular momenta for radial Schr\"odinger operators, see \cite{GlaGroMarThi76,GlaGroMar78}. Our setting is the following. Let $A$ be a linear operator defined on $\mc D(A) = C_c^\infty(\R^+)$ by
\begin{equation}\label{Def:A}
	Au(\rho):=-u''(\rho)+\frac{\alpha}{\rho^2}u(\rho)+V(\rho)u(\rho),
\end{equation}
where $\alpha > -\frac{1}{4}$ and $V\in L_\text{loc}^2(\R^+)$ is a real valued function. Clearly, $A$  is a symmetric operator on $L^2(\R^+)$. What is more, we have the following result.  
\begin{theorem}\label{Thm:GGMT}
	Assume that the potential $V$ is such that the closure of $A$ (which we denote by $\mc A$) is self-adjoint. Furthermore, write $V=V_+-V_-$ where $V_{\pm}\geq0$, and let $p\geq1$. If
	\begin{equation}\label{Eq:Assum}
	\int_{\R^+}\rho^{2p-1}|V_-(\rho)|^p d\rho<\frac{(4\alpha+1)^{\frac{2p-1}{2}}p^p \Gamma(p)^2}{(p-1)^{p-1}\Gamma(2p)}
	\end{equation} 
	then $\sigma(\mc A) \subseteq [0,+\infty)$. Furthermore, zero is not an eigenvalue of $\mc A$.
\end{theorem}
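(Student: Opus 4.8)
The plan is to prove the equivalent statement that the quadratic form
\[
\mathfrak{q}[u] := \int_{\R^+} \Big( |u'(\rho)|^2 + \tfrac{\alpha}{\rho^2} |u(\rho)|^2 + V(\rho)|u(\rho)|^2 \Big)\, d\rho
\]
is nonnegative on $C_c^\infty(\R^+)$, and moreover that $\mathfrak{q}[u] = 0$ forces $u = 0$. Since $\mc A$ is self-adjoint and is the closure (hence the Friedrichs extension) of $A$, nonnegativity of $\mathfrak{q}$ on the form core $C_c^\infty(\R^+)$ yields $\sigma(\mc A) \subseteq [0,\infty)$, and strict positivity of the form away from $0$ rules out a zero eigenvalue. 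So everything reduces to a single functional inequality: for all $u \in C_c^\infty(\R^+)$,
\[
\int_{\R^+} V_-(\rho)\,|u(\rho)|^2\, d\rho \;\le\; \int_{\R^+} |u'(\rho)|^2\, d\rho + \alpha \int_{\R^+} \frac{|u(\rho)|^2}{\rho^2}\, d\rho,
\]
with the constant being not quite sharp so that equality is impossible.

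The main step is to control the left-hand side by $V_-$ in an $L^p$ norm against a Hardy-type functional. First I would combine the two terms on the right into a single ``free'' form. The operator $-\partial_\rho^2 + \alpha/\rho^2$ on $\R^+$ is exactly solvable: for $\alpha > -\tfrac14$ write $\alpha = \nu^2 - \tfrac14$ with $\nu = \tfrac12\sqrt{4\alpha+1} \ge 0$, and then $-\partial_\rho^2 + \alpha/\rho^2$ is unitarily equivalent (via $u(\rho) = \rho^{1/2} g(\rho)$, or directly through the Hankel transform of order $\nu$) to a Bessel-type operator whose Green's function is explicit. Concretely, the ground-state representation $u(\rho) = \rho^{\nu+1/2} w(\rho)$ turns $\int |u'|^2 + \alpha \rho^{-2}|u|^2$ into $\int \rho^{2\nu+1} |w'(\rho)|^2\, d\rho$, i.e. a pure weighted Dirichlet energy with no potential. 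So the inequality to prove becomes
\[
\int_{\R^+} V_-(\rho)\, \rho^{2\nu+1} |w(\rho)|^2\, d\rho \;\le\; \int_{\R^+} \rho^{2\nu+1} |w'(\rho)|^2\, d\rho,
\]
and I want the operator norm of multiplication by $V_-$ relative to this weighted Dirichlet form. The standard route (this is precisely the GGMT philosophy, cf. \cite{GlaGroMarThi76}) is to write $w(\rho)^2 = \big(\int_\rho^\infty (w^2)'(s)\, ds\big)$-type pointwise bounds, use $|(w^2)'| \le 2|w||w'|$, and then estimate via Cauchy–Schwarz and the explicit kernel; equivalently, one estimates the Birman–Schwinger operator $V_-^{1/2} (-\partial_\rho^2 + \alpha/\rho^2)^{-1} V_-^{1/2}$ and shows its norm is $< 1$ under hypothesis \eqref{Eq:Assum}. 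The explicit Green's function of $-\partial_\rho^2 + \alpha/\rho^2$ with the natural boundary condition is $G(\rho,s) = \tfrac{1}{2\nu}(\rho s)^{1/2} (\min(\rho,s)/\max(\rho,s))^{\nu}$ for $\nu > 0$ (with the logarithmic modification at $\nu = 0$), which gives a Schur-test bound on the Birman–Schwinger kernel in terms of $\int \rho^{2p-1} V_-^p$. Optimizing the Schur weights (a power $\rho^\beta$, with $\beta$ chosen by a Beta-function computation) produces exactly the constant $\frac{(4\alpha+1)^{(2p-1)/2} p^p \Gamma(p)^2}{(p-1)^{p-1}\Gamma(2p)}$ on the right of \eqref{Eq:Assum}; here the Gamma factors arise from $\int_0^1 t^{a}(1-t)^b\, dt = B(a+1,b+1)$ with $a, b$ linear in $p$.

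Once the form inequality is established with the stated constant, nonnegativity of $\mc A$ is immediate. For the final assertion that $0$ is not an eigenvalue: since \eqref{Eq:Assum} is a \emph{strict} inequality, the Birman–Schwinger norm bound is strict, $\|V_-^{1/2}(-\partial_\rho^2 + \alpha/\rho^2)^{-1}V_-^{1/2}\| < 1$, so $\mathfrak{q}[u] \ge (1-\theta)\big(\int |u'|^2 + \alpha \int \rho^{-2}|u|^2\big)$ for some $\theta \in [0,1)$ on the form domain; an $L^2$ eigenfunction at $0$ would be a form-domain element with $\mathfrak{q}[u] = 0$, forcing $\int |u'|^2 + \alpha \int \rho^{-2}|u|^2 = 0$ and hence $u = 0$ (using $\alpha > -\tfrac14$, which makes this combined expression a genuine norm on the form domain via Hardy's inequality). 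I expect the main obstacle to be the bookkeeping in the optimization of the Schur test — getting the exact constant, rather than merely \emph{a} constant, requires choosing the Schur weight and the parameter $p$ carefully and recognizing the resulting Beta integral; the functional-analytic wrapper (Friedrichs extension, form cores, Birman–Schwinger) is routine by comparison. A secondary subtlety is the endpoint $\nu = 0$ (i.e. $\alpha = -\tfrac14$ is excluded, but $\alpha$ near $0^-$, $\nu$ near $0$) where the Green's function degenerates logarithmically; since the hypothesis is $\alpha > -\tfrac14$ strictly and the operators of interest ($\mc A_S$ with $\alpha = \tfrac{n^2-1}{4} \ge \tfrac{63}{4}$) are comfortably away from it, one may simply assume $\nu > 0$ throughout, or handle $\nu = 0$ by a limiting argument.
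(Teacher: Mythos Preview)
Your overall strategy---reduce to a form inequality on $C_c^\infty(\R^+)$, then exclude zero separately---matches the paper's, but the route to the sharp constant differs substantially, and your key step is not convincing as written. The paper proceeds much more directly: given $v\in C_c^\infty(\R^+)$ with $(\mc A v|v)<0$ one has $\int(v'^2+\alpha\rho^{-2}v^2)<\int V_- v^2$; a single H\"older with exponents $(p,\tfrac{p}{p-1})$ and weight $\rho^{(2p-1)/p}$ bounds the right side by $\big(\int\rho^{2p-1}V_-^p\big)^{1/p}\big(\int\rho^{-(2p-1)/(p-1)}|v|^{2p/(p-1)}\big)^{(p-1)/p}$, so everything reduces to computing the infimum $\mu(p)$ of $\int(v'^2+\alpha\rho^{-2}v^2)$ over $v$ normalized in this weighted $L^{2p/(p-1)}$ norm. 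The substitution $\rho=e^{x/\sqrt{4\alpha+1}}$, $v(\rho)\sqrt\rho=\phi(x)$ converts $\mu(p)$ into the sharp one-dimensional Gagliardo--Nirenberg constant on $\R$, whose value is cited; the Gamma factors in \eqref{Eq:Assum} fall out immediately. No Green's function, no Birman--Schwinger, no Schur test.

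Your Birman--Schwinger framework is correct in principle, but the claim that a Schur test with power weights on the kernel $V_-^{1/2}G\,V_-^{1/2}$ produces the \emph{exact} GGMT constant does not hold up: with $V_-$ arbitrary the Schur row and column integrals do not collapse to $\int\rho^{2p-1}V_-^p$, and Schur bounds are generically non-sharp. (For $p=1$ the trace $\int V_-(\rho)G(\rho,\rho)\,d\rho=\tfrac1{2\nu}\int\rho V_-$ does recover the Bargmann constant, but that is a trace computation, not a Schur test, and the general-$p$ case requires a different mechanism.) If you push the operator-theoretic route through you will at some point apply H\"older and rediscover the paper's functional inequality anyway. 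For the zero eigenvalue the paper also argues differently: it perturbs $V$ to $V_\varepsilon=V-\varepsilon e^{-\rho}$, observes that \eqref{Eq:Assum} persists for small $\varepsilon>0$, and notes that a zero eigenfunction $\psi$ of $\mc A$ would satisfy $(\mc A_\varepsilon\psi|\psi)<0$, contradicting the first part applied to $\mc A_\varepsilon$. Your strict-form-positivity argument for this point is also valid, provided the quantitative inequality $\mathfrak q[u]\ge(1-\theta)\int(|u'|^2+\alpha\rho^{-2}|u|^2)$ has actually been established---which the paper's H\"older route delivers cleanly but your Schur-test sketch does not.
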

\begin{proof}
	We argue by contradiction. Suppose that there is a negative spectral point of $\mc A$. Then 
	\[
	\inf_{u\in\mc D(\mc A),\,\|u\|_{L^2(\R^+)}=1}( \mc Au|u)_{L^2(\R^+)} = \inf \sigma(\mc A)<0,
	\]
	and due to closedness of $\mc A$ and the continuity of the inner product, there exists $v\in C^\infty_c(\R^+)$ such that $( \mc Av|v )_{L^2(\R^+)}<0$. This in turn implies that for $p\geq1$
	\begin{align*}
		\int_0^{+\infty}\left(v'(\rho)^2+\alpha \rho^{-2}v(\rho)^2\right)d\rho& < \int_{\R^+}V_-(\rho)v(\rho)^2d\rho\\ 
		&\leq \|(\cdot)^{\frac{2p-1}{p}}V_- \|_{L^p(\R^+)} \|(\cdot)^{-\frac{2p-1}{p}}v^2 \|_{L^{\frac{p}{p-1}}(\R^+)},
	\end{align*}
	by integration by parts and H\"older's inequality. In particular, 
	\begin{equation}\label{Eq:Inf}
			\left(\int_{\R^+}\rho^{2p-1}|V_-(\rho)|^p d\rho\right)^\frac{1}{p} \geq  \inf_{v\in C^\infty_c(\R^+), \, v \neq 0}
			\frac{\int_0^{+\infty}\left(v'(\rho)^2+\alpha \rho^{-2}   v(\rho)^2 \right)d\rho}{\left(\int_{0}^{+\infty}\rho^{-\frac{2p-1}{p-1}}|v(\rho)|^{\frac{2p}{p-1}} d\rho \right)^\frac{p-1}{p}}:=\mu(p).
	\end{equation}
	To compute $\mu(p)$ we introduce the following change of variables $ \rho=e^{\frac{x}{\sqrt{4\alpha+1}}}$, $u(\rho)\sqrt{\rho}=\phi(x)$. This leads to
	\begin{equation*}
		\mu(p)=(4\alpha+1)^{\frac{2p-1}{2p}}\inf_{\phi\in C^\infty_c(\R),\, \phi\neq0}\frac{\int_{-\infty}^{+\infty}(\phi'(x)^2+\frac{1}{4}\phi(x)^2)dx}{\left(\int_{-\infty}^{+\infty}|\phi(x)|^{\frac{2p}{p-1}}dx\right)^\frac{p-1}{p}}.
	\end{equation*}
	The value of the infimum on the right is known, see e.g. \cite{CreDonSchSne17}, Lemma A.2, and this gives $$\mu(p)=(4\alpha+1)^{\frac{2p-1}{2p}}\frac{p}{p-1}\left(\frac{(p-1) \Gamma(p)^2}{\Gamma(2p)}\right)^{\frac{1}{p}}.$$
	Finally, Eq.~\eqref{Eq:Inf} is in contradiction with Eq.~\eqref{Eq:Assum}. It remains to prove that zero is not an eigenvalue. We argue by contradiction again. Assume that there is a nontrivial $\psi\in \mc D(\mc A)$  for which $\mc A\psi=0$. This implies $(\mc A\psi|\psi)_{L^2(\R^+)}=0$. Then for $\mc A_\varepsilon$,  which is obtained from $\mc A$ by replacing $V(\rho)$ with $V_\varepsilon(\rho):=V(\rho)-\varepsilon e^{-\rho}$, we see that for small enough $\varepsilon>0$ the inequality \eqref{Eq:Assum} holds and at the same time
	$$(\mc A_\varepsilon \psi|\psi)_{L^2(\R^+)}<(\mc A\psi|\psi)_{L^2(\R^+)}=0,$$
	which is a contradiction based on the first part of the proof.
\end{proof}

\pagestyle{plain}

\begin{thebibliography}{10}

\bibitem{BieBiz11}
Pawe{\l} Biernat and Piotr Bizo\'{n}.
\newblock Shrinkers, expanders, and the unique continuation beyond generic
  blowup in the heat flow for harmonic maps between spheres.
\newblock {\em Nonlinearity}, 24(8):2211--2228, 2011.

\bibitem{BieDon18}
Pawe{\l} Biernat and Roland Donninger.
\newblock Construction of a spectrally stable self-similar blowup solution to
  the supercritical corotational harmonic map heat flow.
\newblock {\em Nonlinearity}, 31(8):3543--3566, 2018.

\bibitem{BieDonSch17}
Pawe{\l} Biernat, Roland Donninger, and Birgit Sch\"{o}rkhuber.
\newblock Stable self-similar blowup in the supercritical heat flow of harmonic
  maps.
\newblock {\em Calc. Var. Partial Differential Equations}, 56(6):Art. 171, 31,
  2017.

\bibitem{BieSek19}
Pawe{\l} Biernat and Yukihiro Seki.
\newblock Type {II} blow-up mechanism for supercritical harmonic map heat flow.
\newblock {\em Int. Math. Res. Not. IMRN}, (2):407--456, 2019.

\bibitem{BizWas15}
Piotr Bizo\'{n} and Arthur Wasserman.
\newblock Nonexistence of shrinkers for the harmonic map flow in higher
  dimensions.
\newblock {\em Int. Math. Res. Not. IMRN}, (17):7757--7762, 2015.

\bibitem{CheZha15}
Zhengxiang Chen and Yongbing Zhang.
\newblock Stabilities of homothetically shrinking {Y}ang-{M}ills solitons.
\newblock {\em Trans. Amer. Math. Soc.}, 367(7):5015--5041, 2015.

\bibitem{ChoOza09}
Yonggeun Cho and Tohru Ozawa.
\newblock Sobolev inequalities with symmetry.
\newblock {\em Commun. Contemp. Math.}, 11(3):355--365, 2009.

\bibitem{CosDonGlo17}
Ovidiu Costin, Roland Donninger, and Irfan Glogi{\'{c}}.
\newblock Mode stability of self-similar wave maps in higher dimensions.
\newblock {\em Comm. Math. Phys.}, 351(3):959--972, oct 2017.

\bibitem{CreDonSchSne17}
Matthew Creek, Roland Donninger, Wilhelm Schlag, and Stanley Snelson.
\newblock Linear stability of the skyrmion.
\newblock {\em Int. Math. Res. Not. IMRN}, (8):2497--2537, 2017.

\bibitem{DonSch12}
Roland Donninger and Birgit Sch{\"o}rkhuber.
\newblock Stable self-similar blow up for energy subcritical wave equations.
\newblock {\em Dyn. Partial Differ. Equ.}, 9(1):63--87, 2012.

\bibitem{DonSch19}
Roland Donninger and Birgit Sch\"{o}rkhuber.
\newblock Stable blowup for the supercritical {Y}ang--{M}ills heat flow.
\newblock {\em J. Differential Geom.}, 113(1):55--94, 2019.

\bibitem{EngNag00}
Klaus-Jochen Engel and Rainer Nagel.
\newblock {\em One-parameter semigroups for linear evolution equations}, volume
  194 of {\em Graduate Texts in Mathematics}.
\newblock Springer-Verlag, New York, 2000.
\newblock With contributions by S. Brendle, M. Campiti, T. Hahn, G. Metafune,
  G. Nickel, D. Pallara, C. Perazzoli, A. Rhandi, S. Romanelli and R.
  Schnaubelt.

\bibitem{Fan99}
Huijun Fan.
\newblock Existence of the self-similar solutions in the heat flow of harmonic
  maps.
\newblock {\em Sci. China Ser. A}, 42(2):113--132, 1999.

\bibitem{Gas02}
Andreas Gastel.
\newblock Singularities of first kind in the harmonic map and {Y}ang-{M}ills
  heat flows.
\newblock {\em Math. Z.}, 242(1):47--62, 2002.

\bibitem{Gho17}
Tej-eddine {Ghoul}.
\newblock {Stable type II blowup for the 7 dimensional 1-corotational energy
  supercritical harmonic map heat flow}.
\newblock {\em arXiv e-prints}, page arXiv:1710.09293, Oct 2017.

\bibitem{GhoNguTie19}
Tej-eddine Ghoul, Slim Ibrahim, and Van~Tien Nguyen.
\newblock On the stability of type ii blowup for the 1-corotational
  energy-supercritical harmonic heat flow.
\newblock {\em Anal. PDE}, 12(1):113--187, 2019.

\bibitem{GlaGroMar78}
V.~Glaser, H.~Grosse, and A.~Martin.
\newblock Bounds on the number of eigenvalues of the {S}chr\"{o}dinger
  operator.
\newblock {\em Comm. Math. Phys.}, 59(2):197--212, 1978.

\bibitem{GlaGroMarThi76}
V.~Glaser, H.~Grosse, A.~Martin, and W.~Thirring.
\newblock A family of optimal conditions for the absence of bound states in a
  potential.
\newblock {\em Les rencontres physiciens-math\'ematiciens de Strasbourg
  -RCP25}, 23, 1976.

\bibitem{Gro01}
Joseph~F. Grotowski.
\newblock Finite time blow-up for the {Y}ang-{M}ills heat flow in higher
  dimensions.
\newblock {\em Math. Z.}, 237(2):321--333, 2001.

\bibitem{GroSha07}
Joseph~F. Grotowski and Jalal Shatah.
\newblock Geometric evolution equations in critical dimensions.
\newblock {\em Calc. Var. Partial Differential Equations}, 30(4):499--512,
  2007.

\bibitem{Kat95}
Tosio Kato.
\newblock {\em Perturbation theory for linear operators}.
\newblock Classics in Mathematics. Springer-Verlag, Berlin, 1995.
\newblock Reprint of the 1980 edition.

\bibitem{KelStr16}
Casey Kelleher and Jeffrey Streets.
\newblock Entropy, stability, and {Y}ang-{M}ills flow.
\newblock {\em Commun. Contemp. Math.}, 18(2):1550032, 51, 2016.

\bibitem{KelleherStreets2018}
Casey Kelleher and Jeffrey Streets.
\newblock Singularity formation of the {Y}ang-{M}ills flow.
\newblock {\em Ann. Inst. H. Poincar\'{e} Anal. Non Lin\'{e}aire},
  35(6):1655--1686, 2018.

\bibitem{KelleherStreets2018c}
Casey Kelleher and Jeffrey Streets.
\newblock Corrigendum to ``{S}ingularity formation of the {Y}ang-{M}ills flow''
  [{A}nn. {I}. {H}. {P}oincar\'{e}---{AN} 35 (6) (2018) 1655--1686].
\newblock {\em Ann. Inst. H. Poincar\'{e} Anal. Non Lin\'{e}aire},
  36(5):1501--1502, 2019.

\bibitem{MusSch13}
Camil Muscalu and Wilhelm Schlag.
\newblock {\em Classical and multilinear harmonic analysis. {V}ol. {I}}, volume
  137 of {\em Cambridge Studies in Advanced Mathematics}.
\newblock Cambridge University Press, Cambridge, 2013.

\bibitem{Nai94}
Hisashi Naito.
\newblock Finite time blowing-up for the {Y}ang-{M}ills gradient flow in higher
  dimensions.
\newblock {\em Hokkaido Math. J.}, 23(3):451--464, 1994.

\bibitem{Paz83}
A.~Pazy.
\newblock {\em Semigroups of linear operators and applications to partial
  differential equations}, volume~44 of {\em Applied Mathematical Sciences}.
\newblock Springer-Verlag, New York, 1983.

\bibitem{RapSch13}
Pierre Rapha{\"e}l and Remi Schweyer.
\newblock Stable blowup dynamics for the 1-corotational energy critical
  harmonic heat flow.
\newblock {\em Comm. Pure Appl. Math.}, 66(3):414--480, 2013.

\bibitem{RapSch14}
Pierre Rapha{\"e}l and Remi Schweyer.
\newblock Quantized slow blow-up dynamics for the corotational energy-critical
  harmonic heat flow.
\newblock {\em Anal. PDE}, 7(8):1713--1805, 2014.

\bibitem{ReeSim75}
Michael Reed and Barry Simon.
\newblock {\em Methods of modern mathematical physics. {II}. {F}ourier
  analysis, self-adjointness}.
\newblock Academic Press [Harcourt Brace Jovanovich, Publishers], New
  York-London, 1975.

\bibitem{SchStrTah98}
Andreas~E. Schlatter, Michael Struwe, and A.~Shadi Tahvildar-Zadeh.
\newblock Global existence of the equivariant {Y}ang-{M}ills heat flow in four
  space dimensions.
\newblock {\em Amer. J. Math.}, 120(1):117--128, 1998.

\bibitem{Tes09}
Gerald Teschl.
\newblock {\em Mathematical methods in quantum mechanics}, volume~99 of {\em
  Graduate Studies in Mathematics}.
\newblock American Mathematical Society, Providence, RI, 2009.

\bibitem{Wal19}
Alex Waldron.
\newblock Long-time existence for {Y}ang-{M}ills flow.
\newblock {\em Invent. Math.}, 217(3):1069--1147, 2019.

\bibitem{Wei87}
Joachim Weidmann.
\newblock {\em Spectral theory of ordinary differential operators}, volume 1258
  of {\em Lecture Notes in Mathematics}.
\newblock Springer-Verlag, Berlin, 1987.

\bibitem{Wei04}
Ben Weinkove.
\newblock Singularity formation in the {Y}ang-{M}ills flow.
\newblock {\em Calc. Var. Partial Differential Equations}, 19(2):211--220,
  2004.

\end{thebibliography}

\bibliographystyle{plain}

\end{document}